\newcommand*\owedge{\mathpalette\@owedge\relax}
\newcommand*\@owedge[1]{%
  \mathbin{%
    \ooalign{%
      $#1\m@th\bigcirc$\cr
      \hidewidth$#1\m@th\wedge$\hidewidth\cr
    }%
  }%
}
\apptocmd{\sloppy}{\hbadness 10000\relax}{}{}
\numberwithin{equation}{section}
\newtheorem{Thm}{Theorem}
\newtheorem{Coro}[Thm]{Corollary}
\newtheorem{theorem}{Theorem}[section]
\newtheorem{corollary}[theorem]{Corollary}
\newtheorem{proposition}[theorem]{Proposition}
\newtheorem{lemma}[theorem]{Lemma}
\newtheorem{definition}[theorem]{Definition}
\theoremstyle{definition}
\renewcommand*{\tilde}{\widetilde}
\newcommand{\mb}{\mathbb}
\newcommand{\mc}{\mathcal}
\newcommand{\Rm}{\text{Rm}}
\title{Convergence of the Ricci flow on asymptotically flat manifolds with integral curvature pinching}
\author{Eric Chen \thanks{Department of Mathematics, Princeton University, Princeton, NJ 08544, USA. Email:
{\footnotesize \href{mailto:ecchen@math.princeton.edu}{ecchen@math.princeton.edu}.}
}
\thanks{Current Address: Department of Mathematics, University of California, Santa Barbara, CA 93106, USA.}}
\begin{document}
\maketitle
\begin{abstract}
    We prove a curvature pinching result for the Ricci flow on asymptotically flat manifolds: if an asymptotically flat manifold of dimension $n\geq 3$ has scale-invariant integral norm of curvature sufficiently pinched relative to the inverse of its Sobolev constant, then the Ricci flow starting from this manifold exists for all positive times and converges to flat Euclidean space. In particular our result implies that the initial manifold must have been diffeomorphic to $\mb{R}^n$.
\end{abstract}

\section{Introduction}

Since the initial work of Hamilton \cite{Ham3}, many curvature pinching results guaranteeing the long-time existence and convergence of the Ricci flow (or its suitable normalization) on compact manifolds have been proved. In this paper a curvature pinching result for the Ricci flow on noncompact, asymptotically flat (AF) manifolds is proved. We first give a short review of known results on the Ricci flow in regard to both curvature pinching and AF manifolds.

For compact manifolds, the long-time existence and convergence of the Ricci flow assuming curvature pinching at the initial time has been well-studied, beginning from Hamilton's proof that the normalized Ricci flow starting from a closed $3$-manifold with $\text{Ric}>0$ exists for all positive times and converges smoothly to a spherical space form \cite{Ham3}. Many other curvature pinching results on the long-time existence and convergence of the Ricci flow on compact manifolds generalizing this statement have since been proved \cite{Huisken,Margerin,Nishikawa,Gursky,HV,BW,BS}, among these for instance the quarter-pinching sphere theorem of Brendle--Schoen.

For noncompact manifolds and in particular AF manifolds, under some assumptions different from curvature pinching long-time existence and convergence results for the Ricci flow have also been proved \cite{OW,SSS}. We remark that AF manifolds are a particularly interesting class of noncompact manifolds to study with regard to the Ricci flow because the flow preserves the AF property as well as the ADM mass \cite{DM}.  There is also a curvature pinching result for the Ricci flow on AF manifolds due to Shi \cite{Shi2}, but B. Chen--Zhu later showed that there exist no AF manifolds satisfying Shi's curvature pinching condition \cite{CZ}.

Our main result, Theorem \ref{A}, is a curvature pinching result for the Ricci flow on AF manifolds for which there exist many nontrivial examples of AF manifolds satisfying its curvature pinching condition. Our other results consist of auxiliary Sobolev inequalities used to prove Theorem \ref{A} as well as statements addressing some questions in the special cases of conformally flat and rotationally symmetric AF manifolds.

The remainder of this introduction has two parts. In the first part we state our results. In the second part we discuss some earlier curvature pinching theorems for the Ricci flow on compact manifolds in order to describe the motivation for our results, and then conclude by outlining the plan of the proofs in the rest of the paper.

\subsection{Results}

We now state the results of this paper. First we state our main theorem on curvature pinching for AF manifolds, Theorem \ref{A}. Then we state the uniform curvature-weighted Sobolev inequalities, Theorems \ref{B} and \ref{B1}, used in the proof of Theorem \ref{A}. Finally we state Corollary \ref{cfc} and Theorem \ref{C1}, which are concerned with the special cases of conformally flat and rotationally symmetric AF manifolds.

\subsubsection{Main theorem}

Below is the main theorem of this paper. We refer to Section \ref{prelim1} for the relevant definitions in the statement below, in particular for the Ricci flow $g(t)$ and the Sobolev constant $C_{g_0}$.

\begin{Thm}\label{A}
Let $(M^n,g_0)$ be an asymptotically flat manifold of order $\tau>0$, with $n\geq 3$. There exists a $\delta(n)>0$ such that if:
\begin{align}
\left(\int_{M^n} |\Rm_{g_0}|^{\frac{n}{2}}\ dV_{g_0}\right)^{\frac{2}{n}}<\delta(n)\frac{1}{C_{g_0}},\label{Ahyp}
\end{align}
where $C_{g_0}$ is the Sobolev constant of $g_0$ given by Definition \ref{SobC}, then the Ricci flow $g(t)$ with initial condition $g(0)=g_0$ exists for all times $t\in[0,\infty)$ and converges in $C^\infty_{-\tau'}(M^n)$ for any $\tau'\in(0,\min(\tau,n-2))$ to the flat $\mb{R}^n$. In particular, $M^n$ is diffeomorphic to $\mb{R}^n$.
\end{Thm}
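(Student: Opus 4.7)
By Shi's short-time existence theorem, the Ricci flow $g(t)$ starting from $g_0$ exists on some maximal interval $[0, T)$ with bounded curvature; by the work of Dai--Ma on preservation of asymptotic flatness under the Ricci flow, each $g(t)$ remains AF of order $\tau$. The goal is to show $T = \infty$ and $g(t) \to g_{\mathrm{flat}}$ in $C^\infty_{-\tau'}(M^n)$.

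The main analytic estimate is an Ye/Hebey--Vaugon-style monotonicity for $\int_M |\Rm|^{n/2} \, dV$ along the flow. Combining Hamilton's evolution inequality $\partial_t |\Rm|^2 \leq \Delta |\Rm|^2 - 2|\nabla \Rm|^2 + c_n |\Rm|^3$ with $\partial_t \, dV = -R \, dV$ and integrating by parts produces
\begin{align*}
\frac{d}{dt}\int_M |\Rm|^{n/2} \, dV \leq -c_1(n)\int_M |\nabla |\Rm|^{n/4}|^2 \, dV + c_2(n)\int_M |\Rm|^{n/2+1} \, dV.
\end{align*}
For the cubic term, Hölder with exponents $(n/(n-2), n/2)$ gives $\int |\Rm|^{n/2+1} \, dV \leq \||\Rm|^{n/4}\|_{L^{2n/(n-2)}}^2 \, \|\Rm\|_{L^{n/2}}$, and applying the uniform curvature-weighted Sobolev inequality of Theorems \ref{B} and \ref{B1} to $u = |\Rm|^{n/4}$ then bounds this by $C_{g(t)} \|\Rm\|_{L^{n/2}} \int |\nabla |\Rm|^{n/4}|^2 \, dV$. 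The crucial feature of Theorems \ref{B} and \ref{B1} is that $C_{g(t)}$ stays comparable to $C_{g_0}$ so long as $\|\Rm(\cdot, t)\|_{L^{n/2}}$ is small. Consequently the pinching hypothesis \eqref{Ahyp} allows one to absorb the cubic term into the gradient term via a continuity argument and conclude that $\|\Rm(\cdot, t)\|_{L^{n/2}}$ stays below $\delta(n)/C_{g_0}$ for all $t \in [0, T)$.

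Given preservation of this $L^{n/2}$ pinching together with the uniform Sobolev inequality, a standard Moser iteration applied to higher powers of $|\Rm|$ yields the pointwise decay $\sup_M |\Rm(\cdot, t)| \leq C(n, g_0) \, t^{-1}$. By Shi's derivative estimates and the curvature-driven extension criterion, this forces $T = \infty$. To upgrade to convergence, the decay $|\Rm(\cdot, t)| \to 0$ combined with preservation of the AF structure produces a smooth pointed Cheeger--Gromov limit which must be isometric to $(\mb{R}^n, g_{\mathrm{flat}})$; uniqueness of the Ricci flow with bounded curvature identifies this limit with $g(t)$ itself, and weighted parabolic Schauder estimates together with bootstrapping refine the convergence to $C^\infty_{-\tau'}(M^n)$ for any $\tau' \in (0, \min(\tau, n-2))$. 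In particular $M^n$ must be diffeomorphic to $\mb{R}^n$.

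The principal obstacle is the control of the Sobolev constant along the flow: on a noncompact AF manifold the Sobolev inequality is not automatically preserved, and without compactness one cannot conclude uniform constants abstractly. The curvature-weighted Sobolev inequalities of Theorems \ref{B} and \ref{B1}, which trade a small amount of integrated curvature for comparability of $C_{g(t)}$ to $C_{g_0}$, are the essential new ingredient; once they are in place, the Moser iteration, pointwise decay, and convergence arguments largely follow established integral-pinching templates, with additional care needed only for the AF geometry at spatial infinity.
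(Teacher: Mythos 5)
Your overall architecture matches the paper's up through the monotonicity of $\int|\Rm|^{n/2}\,dV_t$ and the role of the weighted Sobolev inequalities, but there are genuine gaps between that point and the conclusion. First, Moser iteration cannot be run from the $L^{n/2}$ bound alone: that norm is scale-invariant (critical), and the smallness threshold needed to make $\int|\Rm|^{p}\,dV_t$ monotone depends on $p$, so the critical bound does not self-improve to $L^\infty$. The paper inserts an intermediate step you omit: it proves that the supercritical norm $\int|\Rm|^{\frac{n}{2}\frac{n}{n-2}}\,dV_t$ is also nonincreasing (Lemma \ref{L3}), and it is this fixed $L^q$ bound with $q=\frac{n}{2}\frac{n}{n-2}>\frac{n}{2}$ that feeds the iteration. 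Second, your claimed output $\sup_M|\Rm(\cdot,t)|\le C\,t^{-1}$ is not what the iteration delivers, and would not suffice anyway. With a time-independent $L^q$ bound the iteration gives only an estimate of the form $|\Rm|^2\le\max(C_1t^{-a},C_2t^{b})$ with $b>0$, which yields uniform boundedness (hence $T=\infty$) after restarting the estimate on unit time intervals, but no decay at all. The convergence step requires the strictly stronger statement $\lim_{t\to\infty}t\sup_M|\Rm|=0$; even a genuine $O(t^{-1})$ bound is insufficient, since integrating $\partial_tg=-2\mathrm{Ric}$ against a $t^{-1}$ envelope diverges. The paper obtains $o(t^{-1})$ by integrating the differential inequality for $\int|\Rm|^{n/2}$ in time to get $\int_0^\infty\bigl(\int|\Rm|^{q}\,dV_t\bigr)^{(n-2)/n}dt<\infty$, combining this with the monotonicity of Lemma \ref{L3} to conclude $\bigl(\int|\Rm|^q\,dV_t\bigr)^{(n-2)/n}=o(t^{-1})$, and then rerunning the Moser iteration on $[T/2,T]$ with this decaying input; the exponent $q=\frac{n}{2}\frac{n}{n-2}$ is chosen precisely so that this converts into $\sup|\Rm|=o(t^{-1})$.

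Two further points. Your final convergence step (pointed Cheeger--Gromov limit plus uniqueness) is not how the paper proceeds and would not by itself give $C^\infty_{-\tau'}$ convergence or even identify the diffeomorphism type, since pointed limits can lose structure at spatial infinity; the paper instead adapts Li's argument, and the adaptation is nontrivial because Li assumes $R_{g_0}\ge0$, which here must be replaced by showing $\frac{d}{dt}\int u^p\,dV_t\le0$ for heat-equation solutions using the smallness of $\int|\Rm|^{n/2}$ and the uniform Sobolev inequality. Finally, in dimension $n=3$ the quantity $|\Rm|^{3/2}$ is not smooth where $\Rm$ vanishes, so the integration by parts behind the $L^{3/2}$ monotonicity breaks down; the paper regularizes via $K_\epsilon=(|\Rm|^2+u_\epsilon^2)^{1/2}$ with $u_\epsilon$ a positive caloric function with controlled spatial decay, a step your argument would also need.
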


 This can be viewed as a noncompact analogue of integral curvature pinching results for the Ricci flow on compact manifolds such as \cite{Gursky,HV}. Note that we do not need to assume a priori that $M^n$ is diffeomorphic to $\mb{R}^n$, in contrast to related stability results for the Ricci flow on Euclidean space such as \cite{OW,SSS}.  In Section \ref{exampless} we will see that the integral curvature pinching condition of Theorem \ref{A} is satisfied by many nontrivial examples of AF manifolds, unlike the pointwise curvature pinching condition on AF manifolds of Shi \cite{Shi2}, as discussed earlier. 

We now describe the difficulties we encounter in the two main steps of the proof of Theorem \ref{A}, which are: first, showing  the long-time existence of the Ricci flow; and second, showing the convergence of the flow to flat space once long-time existence is known. 

For the first step of long-time existence we seek to use the weak maximum principle of Moser \cite{Moser} to show that $\|Rm_{g(t)}\|_{L^\infty(M^n)}$ does not blow up along the flow. Therefore, starting from the hypotheses of Theorem \ref{A}, we first need to control integral norms of $\Rm$ along the flow. The main difficulty is that in order to do this we need control of the Sobolev constant $C_{g(t)}$ along the Ricci flow, even though such control cannot be expected in general. To overcome this issue, we study Perelman's $\mc{W}$-functional on AF manifolds and prove a curvature-weighted Sobolev inequality that holds under the assumptions of Theorem \ref{A} along the Ricci flow (Theorem \ref{B1}), inspired by Ye's work on curvature-weighted Sobolev inequalities along the Ricci flow on compact manifolds \cite{RY2}. Despite the curvature weight, we show that this inequality is enough for us to carry out the weak maximum principle argument.

For the second step of convergence of the flow to flat space, the main difficulty is that we need to obtain the much stronger curvature decay estimate $\|Rm_{g(t)}\|_{L^\infty(M^n)}=o(t^{-1})$. To overcome this difficulty we first study the evolution of $\|Rm_{g(t)}\|_{L^q(M^n)}$ for a specific choice of $q=\frac{n}{2}\frac{n}{n-2}$, and we obtain a decay estimate for this quantity using the monotonicity of $\|Rm_{g(t)}\|_{L^{\frac{n}{2}}(M^n)}$, which was shown in the first step. Using this estimate, we can then rework the Moser iteration procedure to find that the specific choice of exponent $q$ gives us exactly the desired $L^\infty$ decay rate of $o(t^{-1})$ for $Rm$. To finish it suffices to adapt work of \cite{YLi} to pass from the decay estimate to convergence to flat space; Li assumes unlike us though that $R_{g(t)}\geq 0$, so we will show how this can be replaced by our assumptions in Theorem \ref{A}.

\subsubsection{Curvature-weighted Sobolev inequalities}

One of the main tools we use in proving Theorem \ref{A} is a curvature-weighted Sobolev inequality which we will also prove, stated below. It is an analogue of a curvature-weighted Sobolev inequality proved for compact manifolds along the Ricci flow by Ye \cite{RY2}, which was obtained using the monotonicity of Perelman's $\mc{W}$-functional \cite{P}. We state two versions of this inequality. In the first version we assume that $R_{g_0}\geq 0$ but require no smallness of $\int_{M^n}|\Rm_{g_0}|^{\frac{n}{2}}\ dV_{g_0}$.

\begin{Thm}\label{B}
Let $(M^n,g(t))$ be a Ricci flow on $[0,T]$ starting from an asymptotically flat manifold $(M^n,g_0)$, $n\geq 3$, of nonnegative scalar curvature. There exists an $A>0$ depending only on $n$ and the Sobolev constant $C_{g_0}$ such that the following weighted Sobolev inequality holds for all $u\in W^{1,2}(M^n,g(t))$ on $[0,T]$:
\begin{align}
    \left(\int_{M^n} |u|^{\frac{2n}{n-2}}\ dV_{g(t)}\right)^{\frac{n-2}{n}}\leq A\left(\int_{M^n}|\nabla_{g(t)} u|^2\ +R_{g(t)} u^2\ dV_{g(t)}\right).\label{WSobolev}
\end{align}
Moreover, in any fixed dimension $n$, $A$ depends linearly on $C_{g_0}$.
\end{Thm}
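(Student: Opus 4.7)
The plan is to adapt Ye's approach \cite{RY2} based on Perelman's $\mathcal{W}$-functional to the asymptotically flat setting. For $\tau>0$, define
\[
\mu(g,\tau) := \inf_{f}\int_{M^n}\bigl[\tau(4|\nabla f|^2 + R) + f - n\bigr](4\pi\tau)^{-n/2}e^{-f}\,dV,
\]
where the infimum is over $f$ with $\int(4\pi\tau)^{-n/2}e^{-f}\,dV = 1$, and set $\nu(g) := \inf_{\tau>0}\mu(g,\tau)$. My proof then proceeds in three steps: (i) bound $\nu(g_0)$ from below purely in terms of $n$ and $C_{g_0}$; (ii) propagate this lower bound to $g(t)$ via Perelman-type monotonicity adapted to AF manifolds; (iii) convert the resulting uniform family of log-Sobolev inequalities along the flow into the weighted Sobolev inequality \eqref{WSobolev}.

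For step (i), substituting $u = (4\pi\tau)^{-n/4}e^{-f/2}$, the task reduces to showing that the usual Sobolev inequality with constant $C_{g_0}$ implies a logarithmic Sobolev inequality of the form
\[
\int_{M^n} u^2\log u^2\,dV_{g_0} \leq \sigma\int_{M^n}(4|\nabla u|^2 + R_{g_0}u^2)\,dV_{g_0} - \tfrac{n}{2}\log\sigma + K(n,C_{g_0}),
\]
for every $\sigma>0$ and all $u$ with $\|u\|_{L^2(g_0)} = 1$. This follows by a standard Jensen-type argument from the Sobolev inequality, with $R_{g_0}\geq 0$ ensuring the scalar-curvature term is a harmless nonnegative addition. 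Optimizing in $\sigma$ gives $\nu(g_0)\geq -K_1(n,C_{g_0})$, and the linearity of $A$ in $C_{g_0}$ will already be visible in how $K$ depends on $C_{g_0}$.

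For step (ii), I would verify Perelman's monotonicity $\tfrac{d}{dt}\mu(g(t),\tau-t)\geq 0$ directly in the AF setting. Existence and regularity of a minimizer $f_t$ are expected from the coercivity furnished by $R\geq 0$ (preserved by the flow) together with the AF structure; the Euler--Lagrange equation forces $f_t\sim |x|^2/(4(\tau-t))$ at infinity, so that $e^{-f_t}$ and its first derivatives decay rapidly enough to eliminate all boundary contributions in Perelman's integration-by-parts computation. Combined with step (i), this yields $\nu(g(t))\geq -K_1(n,C_{g_0})$ uniformly on $[0,T]$.

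For step (iii), this uniform lower bound on $\nu(g(t))$ unfolds by definition into a family of log-Sobolev inequalities for $g(t)$, indexed by $\tau>0$, with Dirichlet form $\int (4|\nabla u|^2 + R_{g(t)}u^2)\,dV_{g(t)}$. A Davies--Gross-type hypercontractivity argument, as in \cite{RY2}, then upgrades such a family into a Sobolev-type inequality of the desired shape \eqref{WSobolev}, and tracking the dependence of constants through each step gives linear dependence of $A$ on $C_{g_0}$ in fixed dimension. The main technical obstacle I anticipate is step (ii): rigorously establishing existence and sufficient decay of $\mathcal{W}$-minimizers on the noncompact $(M^n,g(t))$ and justifying Perelman's monotonicity without boundary contributions at infinity is precisely the novel piece in transferring Ye's compact-manifold theorem to the AF setting.
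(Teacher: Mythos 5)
Your overall architecture (log--Sobolev at $t=0$ from the Sobolev inequality, Perelman-type monotonicity along the flow, then a Davies--Gross upgrade on an exhaustion) matches the paper's proof, and your steps (i) and (iii) are essentially the argument given there. The genuine gap is in step (ii). You propose to prove $\frac{d}{dt}\mu(g(t),\tau-t)\geq 0$ by first establishing existence, regularity, and Gaussian decay $f_t\sim |x|^2/(4(\tau-t))$ of a \emph{minimizer} of the $\mathcal{W}$-functional at each time. On a noncompact manifold this is not a routine coercivity statement: the obstruction is loss of compactness at infinity, not lack of coercivity, and $R\geq 0$ does not prevent a minimizing sequence from escaping to the asymptotically Euclidean end (where the infimum may fail to be attained). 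The asserted asymptotics of $f_t$ are likewise an Euler--Lagrange analysis you have not carried out, and even granting a minimizer at each time, differentiating $t\mapsto\mu(g(t),\tau-t)$ through a $t$-dependent family of minimizers requires additional regularity of that family. As written, step (ii) is the load-bearing step and it is not proved.

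The paper avoids minimizers entirely. It restricts the infimum defining $\mu$ to test functions in a class $\mc{C}_+(M)$ of positive functions with quadratic exponential decay (dense enough by an approximation argument), takes a sequence of \emph{near}-minimizers at the later time $t_2$ as terminal data, solves the conjugate heat equation $\partial_t v=-\Delta v+Rv$ backwards to $t_1$, and applies the pointwise monotonicity formula \eqref{monoform} along that solution; the justification that all boundary terms at infinity vanish is imported from the heat-kernel and comparison estimates of Q.~Zhang, which only require the terminal data's quadratic exponential decay together with bounded geometry on $[0,T]$ (supplied by the short-time existence theory). Monotonicity of $\mu$ then follows by contradiction: $\mu(t_1)\leq\mc{W}(t_1)\leq\mc{W}(t_2)\leq\mu(t_2)+\epsilon$. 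If you replace your minimizer construction with this near-minimizer/conjugate-heat-equation scheme (or supply a genuine existence-and-decay theorem for $\mathcal{W}$-minimizers on AF manifolds, which would be a substantial result in its own right), the rest of your outline goes through.
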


The second version of the curvature-weighted Sobolev inequality does not require any pointwise assumptions on $R_{g_0}$ but assumes instead that the quantity $\int_{M^n} |\Rm_{g_0}|^{\frac{n}{2}}\ dV_{g_0}$ is small. This is the version we will use to prove Theorem \ref{A}. In fact it suffices to assume just that $\int_{M^n} |R_{g_0}|^{\frac{n}{2}}\ dV_{g_0}$ is small, but for our applications this distinction is not significant. Below, $R^+(x):=\max(0,R(x))$ denotes the positive part of the scalar curvature.

\begin{Thm}\label{B1}
Let $(M^n,g(t))$ be a Ricci flow on $[0,T]$ starting from an asymptotically flat manifold $(M^n,g_0)$, $n\geq 3$. If $\left(\int_{M^n} |\Rm_{g_0}|^{\frac{n}{2}}\ dV_{g_0}\right)^{\frac{2}{n}}<2\frac{1}{C_{g_0}}$, then there exists $A>0$ depending only on $n$ and the Sobolev constant $C_{g_0}$ such that the following weighted Sobolev inequality holds for all $u\in W^{1,2}(M^n,g(t))$ on $[0,T]$:
\begin{align}
    \left(\int_{M^n} |u|^{\frac{2n}{n-2}}\ dV_{g(t)}\right)^{\frac{n-2}{n}}\leq A\left(\int_{M^n}|\nabla_{g(t)} u|^2\ +R_{g(t)}^+ u^2\ dV_{g(t)}\right).\label{Sobolev}
\end{align}
Moreover, in any fixed dimension $n$, $A$ depends linearly on $C_{g_0}$.
\end{Thm}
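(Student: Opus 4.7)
The plan is to follow the approach of Ye \cite{RY2} from the compact setting, suitably adapted to AF manifolds. The key quantity is Perelman's $\mu$-invariant $\mu(g,\sigma) := \inf_u \mc{W}(g,u,\sigma)$, where $\mc{W}$ is Perelman's entropy functional. I would first bound $\mu(g_0,\sigma)$ from below for $\sigma$ in a range depending on $C_{g_0}$, using the initial Sobolev inequality together with the smallness hypothesis $\|\Rm_{g_0}\|_{L^{n/2}}^2 < 2/C_{g_0}$ (which plays the role of $R_{g_0}\geq 0$ in Theorem \ref{B}). I would then propagate the bound along the Ricci flow via $\mc{W}$-monotonicity, obtaining $\mu(g(t),\sigma-t) \geq \mu(g_0,\sigma)$, and finally convert the lower bound on $\mu(g(t),\cdot)$ back into the desired weighted Sobolev inequality.

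For Step 1, starting from $\|u\|_{L^{2n/(n-2)}(g_0)}^2 \leq C_{g_0}\|\nabla u\|_{L^2(g_0)}^2$ and applying Jensen's inequality together with the tangent-line bound $\log y \leq y/\sigma_0 + \log\sigma_0 - 1$, one obtains, for any $\alpha>0$ and $u$ with $\|u\|_{L^2(g_0)}=1$, a logarithmic Sobolev inequality
\[
  \int_{M^n} u^2 \log u^2 \, dV_{g_0} \leq \alpha\int_{M^n}|\nabla u|^2 \, dV_{g_0} + \beta(\alpha,C_{g_0},n).
\]
Substituting into $\mc{W}(g_0,u,\sigma)$ cancels the gradient terms, and H\"older--Sobolev yields
\[
  \int_{M^n} R^-_{g_0} u^2 \, dV_{g_0} \leq c(n)\|\Rm_{g_0}\|_{L^{n/2}(g_0)} \cdot C_{g_0}\|\nabla u\|_{L^2(g_0)}^2.
\]
The smallness hypothesis lets the resulting negative $\|\nabla u\|^2$ coefficient be balanced against the log-Sobolev's positive one by choosing $\sigma$ sufficiently small relative to $C_{g_0}^{1/2}$, yielding $\mu(g_0,\sigma) \geq -\kappa(\sigma,C_{g_0},n)$ for $\sigma$ in a range $(0,\sigma_0(C_{g_0},n)]$.

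For Step 2, I would verify that $\mc{W}$-monotonicity extends to AF manifolds. This reduces to checking that admissible test functions $u$ have sufficient decay at infinity for the integration-by-parts in Perelman's argument to produce no boundary contributions, and that the relevant minimizers (or approximating sequences) exist in an appropriate weighted function space. For Step 3, a lower bound on $\mu(g(t),\sigma_*)$ at any single admissible $\sigma_*$ gives, after discarding the favorable $-\sigma_* \int R^-_{g(t)} u^2$ term, a log-Sobolev inequality of the form
\[
  \int_{M^n} u^2 \log u^2 \, dV_{g(t)} \leq 4\sigma_*\int_{M^n}|\nabla u|^2 \, dV_{g(t)} + \sigma_* \int_{M^n} R^+_{g(t)} u^2 \, dV_{g(t)} + \kappa'\int_{M^n} u^2 \, dV_{g(t)},
\]
which by the standard Davies/Beckner upgrade from log-Sobolev to Sobolev produces the desired weighted Sobolev inequality \eqref{Sobolev} with $A$ depending linearly on $C_{g_0}$ in the stated sense.

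The main obstacle I anticipate is Step 2: Perelman's monotonicity proof on compact manifolds uses existence of a smooth $\mc{W}$-minimizer, and in the AF setting this becomes subtle because of noncompactness. Establishing either minimizer existence in a suitable weighted function space (or a workable approximation scheme), and verifying that all boundary terms at infinity vanish, is where most of the real technical work would lie.
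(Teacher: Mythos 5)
Your proposal follows essentially the same route as the paper: a log-Sobolev inequality at $t=0$ with the negative scalar curvature contribution absorbed via H\"older and the Sobolev inequality under the smallness hypothesis, propagation by Perelman's $\mathcal{W}$-monotonicity (justified on AF manifolds exactly as you anticipate --- via terminal data with quadratic exponential decay and approximating sequences rather than minimizers, following Q.~Zhang), and the Davies--Ye semigroup upgrade on an exhaustion by balls using the operator $-\Delta+R^+$. One small correction: no restriction to small $\sigma$ is needed in your Step 1, since the gradient term $4\sigma\int|\nabla u|^2$ and the absorbed error both scale linearly in $\sigma$, the lower bound on $\mu(g_0,\sigma)$ holds for all $\sigma>0$ --- and this is in fact required, because monotonicity compares $\mu(g(t),\sigma)$ with $\mu(g_0,t+\sigma)$ at the \emph{larger} parameter value.
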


\subsubsection{Conformally flat AF metrics}

Returning to the statement of Theorem \ref{A}, we have the following consequence for asymptotically flat conformal deformations of the flat metric on $\mb{R}^n$:

\begin{Coro}\label{cfc}
    Let $(\mb{R}^n,g_0)$ be asymptotically flat of order $\tau>0$, with $n\geq 3$ and $g_0=e^{2u}|dx|^2$, where $u\in C^\infty(\mb{R}^n)$. There exists a constant $\Lambda(n)>0$ such that if
    \begin{align}
        \left(\int_{\mb{R}^n}|Rm_{g_0}|^{\frac{n}{2}}\ dV_{g_0}\right)^{\frac{2}{n}}<\Lambda(n),
    \end{align}
    then the Ricci flow $g(t)$ with initial condition $g(0)=g_0$ exists for all times $t\in[0,\infty)$ and converges in $C^\infty_{-\tau'}(\mb{R}^n)$ for any $\tau'\in(0,\min(\tau,n-2))$ to the flat metric on $\mb{R}^n$.
\end{Coro}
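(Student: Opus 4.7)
The plan is to deduce Corollary \ref{cfc} from Theorem \ref{A} by showing that the Sobolev constant $C_{g_0}$ of an asymptotically flat conformally flat metric can be bounded by a purely dimensional constant once $\int_{\mb{R}^n}|\Rm_{g_0}|^{\frac{n}{2}}\ dV_{g_0}$ is sufficiently small. The key input is the conformal invariance of the Yamabe--Sobolev inequality: writing $g_0 = \phi^{\frac{4}{n-2}}|dx|^2$ with $\phi = e^{\frac{(n-2)u}{2}}$, the substitution $w = v\phi$ combined with the transformation rule for the conformal Laplacian (using that the Euclidean scalar curvature vanishes) converts the sharp Sobolev inequality on $(\mb{R}^n,|dx|^2)$ into
\begin{align*}
Y(S^n) \left(\int_{\mb{R}^n} |v|^{\frac{2n}{n-2}}\ dV_{g_0}\right)^{\frac{n-2}{n}} \leq \int_{\mb{R}^n} \frac{4(n-1)}{n-2} |\nabla_{g_0} v|^2 + R_{g_0} v^2\ dV_{g_0}
\end{align*}
for all $v \in C^\infty_c(\mb{R}^n)$, where $Y(S^n)$ is the Yamabe constant of the round sphere.

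The next step is to absorb the scalar curvature term using H\"older's inequality and the pointwise bound $|R_{g_0}|\leq c(n)|\Rm_{g_0}|$:
\begin{align*}
\int_{\mb{R}^n} |R_{g_0}| v^2\ dV_{g_0} \leq c(n) \left(\int_{\mb{R}^n} |\Rm_{g_0}|^{\frac{n}{2}}\ dV_{g_0}\right)^{\frac{2}{n}} \left(\int_{\mb{R}^n} |v|^{\frac{2n}{n-2}}\ dV_{g_0}\right)^{\frac{n-2}{n}}.
\end{align*}
If $\Lambda(n)$ is chosen so that $c(n)\Lambda(n) \leq Y(S^n)/2$, then the hypothesis of Corollary \ref{cfc} renders this at most half the left side of the Yamabe--Sobolev inequality, and absorbing it yields an unweighted Sobolev inequality on $(\mb{R}^n,g_0)$ with constant $C(n) := 8(n-1)/((n-2)Y(S^n))$ depending only on $n$; in particular $C_{g_0}\leq C(n)$.

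Finally, further shrinking $\Lambda(n)$ to also satisfy $\Lambda(n) < \delta(n)/C(n)$, with $\delta(n)$ the constant from Theorem \ref{A}, ensures that the hypothesis $(\int |\Rm_{g_0}|^{\frac{n}{2}}\ dV_{g_0})^{\frac{2}{n}} < \Lambda(n)$ implies both $C_{g_0}\leq C(n)$ and $(\int |\Rm_{g_0}|^{\frac{n}{2}}\ dV_{g_0})^{\frac{2}{n}} < \delta(n)/C_{g_0}$, so hypothesis (\ref{Ahyp}) holds and Theorem \ref{A} delivers the conclusion directly. The argument is essentially algebraic once the conformal Yamabe--Sobolev inequality is established; the only conceptual point is conformal invariance, and the only technical point is to justify the inequality in the noncompact AF setting, which I would handle by density of compactly supported test functions in $W^{1,2}(\mb{R}^n,g_0)$.
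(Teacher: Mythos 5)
Your proposal is correct and follows essentially the same route as the paper: both use conformal invariance of the Yamabe--Sobolev inequality to obtain a scalar-curvature-weighted Sobolev inequality for $g_0$, absorb the $R_{g_0}$ term via H\"older and the smallness of $\left(\int|\Rm_{g_0}|^{\frac{n}{2}}\,dV_{g_0}\right)^{\frac{2}{n}}$ to bound $C_{g_0}$ by a dimensional constant, and then shrink $\Lambda(n)$ so that hypothesis \eqref{Ahyp} of Theorem \ref{A} holds. Your write-up is in fact somewhat more explicit than the paper's about the conformal transformation and the absorption step.
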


In particular the above statement holds for rotationally symmetric asymptotically flat metrics on $\mb{R}^n$, since these are conformal deformations of the flat metric. Such metrics are exactly those studied by \cite{OW}, and we will briefly discuss the following connection between our scale-invariant $L^{\frac{n}{2}}$ curvature pinching condition and the no minimal hyperspheres condition of \cite{OW} guaranteeing long-time existence and convergence of the flow in the rotationally symmetric asymptotically flat setting, which we have quoted in Theorem \ref{OWt}. It is easily obtained in even dimensions $n$ as a consequence of the Chern-Gauss-Bonnet formula for manifolds with boundary \cite{AW}, but we will see that it holds in any dimension $n\geq 2$.

\begin{Thm}\label{C1}
For every $n\geq 2$, there exists a $C(n)>0$ such that if $(\mb{R}^n,g)$ is a rotationally symmetric metric with $\int_{\mb{R}^n}|\Rm_g|^{\frac{n}{2}}\ dV_g<C(n)$, then $(\mb{R}^n,g)$ does not contain any minimal hyperspheres.
\end{Thm}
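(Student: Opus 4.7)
The plan is to prove the contrapositive: if a rotationally symmetric metric $g = dr^2 + \phi(r)^2 g_{S^{n-1}}$ on $\mathbb{R}^n$ admits a minimal hypersphere---equivalently, if $\phi'(r_0) = 0$ for some $r_0 > 0$---then $\int_{\mathbb{R}^n}|\Rm_g|^{n/2}\,dV_g \geq C(n)$ for a universal $C(n) > 0$. By replacing $r_0$ with the smallest positive zero of $\phi'$ I may assume $\phi$ is strictly increasing on $[0,r_0]$, and by scale-invariance of the curvature integral I normalize $\phi(r_0) = 1$. The case $n=2$ is immediate from Gauss--Bonnet on the disk $\{r\leq r_0\}$, whose boundary is geodesic: $\int K\,dA = 2\pi$, so $\int|\Rm|\,dV = 2\int|K|\,dV \geq 4\pi$. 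From here on $n\geq 3$.

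In the rotationally symmetric setting the only two sectional curvatures are the radial $K_{\mathrm{rad}} = -\phi''/\phi$ and the tangential $K_{\mathrm{tan}} = (1-(\phi')^2)/\phi^2$, and an orthonormal frame computation gives the pointwise bound $|\Rm|^{n/2} \geq c_n(|K_{\mathrm{rad}}|^{n/2}+|K_{\mathrm{tan}}|^{n/2})$. Integration by parts using the three boundary conditions $\phi(0) = 0$, $\phi'(0) = 1$, $\phi'(r_0) = 0$ gives $\int_0^{r_0}K_{\mathrm{rad}}\phi^2\,dr = \int_0^{r_0}(\phi')^2\,dr$ and $\int_0^{r_0}K_{\mathrm{tan}}\phi^2\,dr = r_0 - \int_0^{r_0}(\phi')^2\,dr$; summing, and separately applying Cauchy--Schwarz to $\int_0^{r_0}\phi'\,dr = 1$, yields
\[
\int_0^{r_0}(K_{\mathrm{rad}}+K_{\mathrm{tan}})\phi^2\,dr = r_0 \qquad\text{and}\qquad \int_0^{r_0}K_{\mathrm{rad}}\phi^2\,dr \geq \frac{1}{r_0}.
\]

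I then apply Hölder's inequality with conjugate exponents $n/2$ and $n/(n-2)$ to each of these, using the factorization $\phi^2 = \phi^{2(n-1)/n}\cdot\phi^{2/n}$ and the elementary estimate $\int_0^{r_0}\phi^{2/(n-2)}\,dr \leq r_0$ that follows from $\phi \leq 1$. Translating the resulting estimates for $|K_{\mathrm{rad}}|^{n/2}$ and $|K_{\mathrm{tan}}|^{n/2}$ back to $|\Rm|^{n/2}$ via the pointwise inequality above produces two scale-invariant lower bounds,
\[
\int_{\mathbb{R}^n}|\Rm|^{n/2}\,dV \geq C_1(n)\,r_0 \qquad\text{and}\qquad \int_{\mathbb{R}^n}|\Rm|^{n/2}\,dV \geq \frac{C_2(n)}{r_0^{n-1}},
\]
and taking the maximum of these and optimizing in the a priori free parameter $r_0$ gives the universal constant $C(n) = C_1(n)^{(n-1)/n}C_2(n)^{1/n}$.

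The main obstacle is the odd-dimensional case, where the Chern--Gauss--Bonnet approach available when $n$ is even breaks down: the Pfaffian is defined only in even dimensions, and the closed manifold obtained by doubling the disk across the totally geodesic minimal hypersphere has $\chi = 0$ when $n$ is odd. The strategy above is purely geometric and entirely parity-insensitive; conceptually the first estimate dominates when $r_0$ is large, where $\phi$ rises slowly and $K_{\mathrm{tan}}$ contributes over a long interval, while the second dominates when $r_0$ is small, where $\phi$ must rise quickly and the concentration of $\phi''$ forces large radial curvature.
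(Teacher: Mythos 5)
Your proof is correct and follows essentially the same route as the paper's: a contrapositive argument based on the two sectional curvatures $\nu_1=(1-(\phi')^2)/\phi^2$ and $\nu_2=-\phi''/\phi$, the scale invariance of $\int|\Rm|^{n/2}\,dV$, integration by parts against the boundary conditions $\phi(0)=0$, $\phi'(0)=1$, $\phi'(r_0)=0$, and Cauchy--Schwarz/H\"older. The differences are only organizational --- you normalize the height $\phi(r_0)=1$ and balance the two scale-invariant bounds $C_1(n)r_0$ and $C_2(n)r_0^{-(n-1)}$, whereas the paper normalizes the domain to $[0,1]$ and case-splits on the height $g(1)$; your separate Gauss--Bonnet treatment of $n=2$ (where $\nu_1$ has multiplicity zero) is a sensible precaution, though the paper's radial-curvature estimate also degenerates gracefully there via the boundary term.
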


Whether the special case of Corollary \ref{cfc} in the rotationally symmetric setting is a consequence of the work of \cite{OW} or not would depend on a comparison of the sizes of the constants $\Lambda(n)$ and $C(n)$, but we will not further pursue this question in this paper.

\subsection{Some previous results and motivation}

From among the variety of curvature pinching results known on compact manifolds, below we will look specifically at the integral curvature pinching results of \cite{Gursky,HV} in more detail in order to describe how those results help motivate the study in this paper. This will also lead us to examples of nontrivial AF manifolds which satisfy the curvature pinching condition in Theorem \ref{A}.

\subsubsection{Integral curvature pinching on compact manifolds}

On compact manifolds, the long-time existence and convergence of Ricci flow was studied first under pointwise curvature pinching hypotheses, such as in \cite{Ham3,Huisken,Margerin,Nishikawa}, which were later partially generalized to integral curvature pinching hypotheses, including in \cite{Ye1,Gursky,HV,DPW}. In what follows we want to discuss in particular some statements of \cite{Gursky,HV};  $\mc{Y}^+(M^n)$ will denote the set of positive scalar curvature Yamabe metrics of a closed manifold $M^n$ with unit volume. A special case of a result of Gursky's showed the following:
\begin{theorem}[{\cite{Gursky}}]\label{Gky}
        There exists $\epsilon>0$ such that if $g_0\in\mc{Y}^+(S^4)$ satisfies $\int_{S^4}|W_{g_0}|^2\ dV_{g_0}<\epsilon$, then the normalized Ricci flow $g(t)$ with initial condition $g(0)=g_0$ exists for all times $t\in[0,\infty)$ and converges to the standard $S^4$.
\end{theorem}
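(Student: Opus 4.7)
The plan is to convert the hypothesis on $\int_{S^4}|W_{g_0}|^2\,dV_{g_0}$ into integral pinching of the full traceless curvature tensor of $g_0$ by combining the Chern--Gauss--Bonnet formula with the sharpness of the Yamabe invariant of $S^4$, and then to run a Ricci flow integral pinching argument analogous to the one outlined for Theorem \ref{A}, in the simpler compact, positive-Yamabe-class setting.

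The first step uses Chern--Gauss--Bonnet in dimension four:
\[
16\pi^2 = 8\pi^2\chi(S^4) = \int_{S^4}\Bigl(|W_{g_0}|^2 - \tfrac{1}{2}|\mathring{\text{Ric}}_{g_0}|^2 + \tfrac{1}{24}R_{g_0}^{2}\Bigr)dV_{g_0}.
\]
Since $g_0\in\mc{Y}^+(S^4)$ has unit volume and constant scalar curvature equal to the Yamabe invariant $Y([g_0])\leq Y(S^4)=8\sqrt{6}\,\pi$, one obtains $\tfrac{1}{24}\int R_{g_0}^2\,dV_{g_0}\leq \tfrac{1}{24}Y(S^4)^2=16\pi^2$. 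Substituting this back and rearranging gives
\[
\int_{S^4}|\mathring{\text{Ric}}_{g_0}|^2\,dV_{g_0}\leq 2\int_{S^4}|W_{g_0}|^2\,dV_{g_0}<2\epsilon,
\]
so the entire traceless part of $\Rm_{g_0}$ is small in $L^2$. Moreover, $Y([g_0])>0$ yields, via the Yamabe functional, a uniform lower bound on the Sobolev quotient of $g_0$.

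The second step is to propagate this integral smallness along the normalized Ricci flow on $S^4$, by tracking $\int_{S^4}|\Rm_{g(t)}|^{2}\,dV_{g(t)}$ and $\int_{S^4}|W_{g(t)}|^2\,dV_{g(t)}$ using their reaction-diffusion evolution equations. Uniform Sobolev inequalities along the flow follow from monotonicity of Perelman's $\mc{W}$-functional in the style of Ye, the compact analogue of Theorems \ref{B} and \ref{B1}. Moser iteration applied to $|\Rm_{g(t)}|$ then promotes the integral smallness to pointwise smallness at some positive time $t_0$, at which point the scale-invariant pinching condition of Margerin (or Huisken) is satisfied and guarantees long-time existence of the normalized flow together with smooth convergence to the round $S^4$.

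The main obstacle is the tightness of the opening Gauss--Bonnet/Yamabe step: the identity $\tfrac{1}{24}Y(S^4)^2=16\pi^2=8\pi^2\chi(S^4)$ is saturated on the round metric, so the entire algebraic margin in the argument comes from the strictness of $Y([g_0])<Y(S^4)$, and no analogue is available if $g_0$ is only conformal to a Yamabe representative rather than being one. Consequently the restriction $g_0\in\mc{Y}^+(S^4)$ is essential, and this is the point where a naive generalization away from Yamabe metrics or away from $S^4$ breaks down. The Ricci flow half of the argument, by contrast, is a standard compact analogue of the weighted Sobolev and Moser iteration scheme developed later in this paper.
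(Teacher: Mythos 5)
This statement is quoted in the paper from \cite{Gursky} and is not proved there, so there is no in-paper argument to compare against; I am judging your proposal on its own terms. Your overall architecture --- Chern--Gauss--Bonnet plus Aubin's inequality $Y([g_0])\leq Y(S^4)$ to convert Weyl pinching into $L^2$-pinching of the full traceless curvature, followed by an integral-to-pointwise pinching argument along the normalized flow and an appeal to Margerin/Huisken --- is the right shape, and your first step is essentially correct: with $R_{g_0}\equiv Y([g_0])$ constant and $\tfrac{1}{24}\int R_{g_0}^2\leq\tfrac{1}{24}Y(S^4)^2=16\pi^2$, Gauss--Bonnet forces $\int|\mathring{\text{Ric}}_{g_0}|^2\lesssim\int|W_{g_0}|^2<\epsilon$ and, just as importantly, $R_{g_0}^2\geq 24(16\pi^2-\epsilon)$. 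Note that this quantitative lower bound on $R_{g_0}$, not the mere positivity of $Y([g_0])$ as you write, is what gives a uniform Sobolev inequality and makes the pinching scale correctly relative to $R$.

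The genuine gap is in your second step: you propose to propagate smallness of $\int|\Rm_{g(t)}|^2$ and to run Moser iteration on $|\Rm_{g(t)}|$ to obtain ``pointwise smallness'' of $|\Rm|$. That quantity is never small on $S^4$ and cannot become small: in dimension four $|\Rm|^2=|W|^2+2|E|^2+\tfrac16 R^2$, so your own Gauss--Bonnet step gives $\int|\Rm_{g_0}|^2\geq\tfrac16 Y([g_0])^2\geq 64\pi^2-4\epsilon$, and the round limit metric has $|\Rm|$ bounded away from zero pointwise. Taken literally, the conclusion of your iteration would contradict convergence to the round sphere. What must be propagated and iterated is the \emph{traceless} curvature $\mathring{\Rm}=W+\tfrac12 E\owedge g$ --- exactly the tensor your first step shows is small in $L^2$ --- with the goal of a pointwise bound $|\mathring{\Rm}|<\eta R$ at some $t_0>0$, which is the hypothesis Margerin/Huisken actually require. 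Closing the differential inequality for $\int|\mathring{\Rm}|^{2}\,dV_t$ (or a higher $L^p$ norm, $p>2$, as in \cite{HV}) is also more delicate than for $\int|\Rm|^{2\alpha}$ in the AF setting of this paper, because the reaction terms mix $\mathring{\Rm}$ with the scalar part and one must use the lower bound on $R$ together with the uniform Sobolev inequality to absorb them. Finally, your closing remark that ``the entire algebraic margin comes from the strictness of $Y([g_0])<Y(S^4)$'' is off: Aubin's inequality is used non-strictly, and the margin in the Gauss--Bonnet identity comes from the favorable sign of the $-\tfrac12\int|E|^2$ term against the smallness of $\int|W|^2$.
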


Later, Hebey--Vaugon showed the following result in the same spirit, but with less restriction on the underlying manifold:

\begin{theorem}[{\cite{HV}}]\label{HVthm}
        For $n\geq 3$ and $q>\frac{n}{2}$, there exists $\epsilon=\epsilon(n,q)>0$ such that if $(M^n,g_0)$ is a closed manifold with $g_0\in\mc{Y}^+(M^n)$ which satisfies $\left(\int_{M^n}\left|W_{g_0}+\frac{1}{n-2}E_{g_0}\owedge g_0\right|^q\ dV_{g_0}\right)^{\frac{1}{q}}<\epsilon R_{g_0}$, then the normalized Ricci flow $g(t)$ with initial condition $g(0)=g_0$ exists for all times $t\in[0,\infty)$ and converges to a spherical space form.
\end{theorem}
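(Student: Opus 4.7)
The plan is to propagate the scale-invariant $L^q$ smallness of the tensor $Z := W + \tfrac{1}{n-2}E\owedge g = \Rm - \tfrac{R}{2n(n-1)}g\owedge g$ along the normalized Ricci flow, upgrade it to a pointwise smallness $|Z|\ll R$ via Moser iteration, and then invoke a Huisken--Margerin--Nishikawa-type pointwise pinching convergence theorem to obtain smooth convergence to a spherical space form. The tensor $Z$ vanishes if and only if $g$ has constant sectional curvature, so the hypothesis asserts that $g_0$ is close to a constant-curvature metric in a scale-invariant integral sense. Since $g_0\in\mathcal{Y}^+(M^n)$ has unit volume, $R_{g_0}$ equals the positive Yamabe constant $Y(M^n,[g_0])$, which also supplies a strong initial Sobolev inequality.

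The first ingredient I would establish is a uniform-in-$t$ Sobolev inequality for $g(t)$, which on closed manifolds is the analogue of the curvature-weighted Sobolev inequalities given in Theorems \ref{B} and \ref{B1} for the asymptotically flat setting. The Yamabe assumption gives a Sobolev inequality at $t=0$ with constant controlled by $Y(M^n,[g_0])^{-1}$, and uniform control along the flow can be obtained via monotonicity of Perelman's $\lambda$- or $\mathcal{W}$-functional in the spirit of \cite{RY2}, combined if necessary with the smallness hypothesis to absorb the scalar-curvature weight.

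Next I would analyze the evolution of $|Z|^2$ under the normalized Ricci flow. Schematically $(\partial_t-\Delta)|Z|^2\leq -2|\nabla Z|^2+C(n)|\Rm|\,|Z|^2$ plus lower-order terms coming from the volume normalization. Multiplying by $|Z|^{q-2}$, integrating over $M^n$, splitting $|\Rm|\lesssim|Z|+R$ and using Kato's inequality yields
\begin{align*}
\frac{d}{dt}\int_{M^n}|Z|^q\,dV_{g(t)} + c(n,q)\int_{M^n}\bigl|\nabla|Z|^{q/2}\bigr|^2\,dV_{g(t)} \leq C(n,q)\int_{M^n}(|Z|+R)|Z|^q\,dV_{g(t)}.
\end{align*}
The cubic $|Z|^{q+1}$ contribution on the right can be absorbed into the gradient term by Hölder's inequality followed by the uniform Sobolev inequality applied to $|Z|^{q/2}$, provided $\|Z\|_{L^q}$ remains small; the exponent $q>n/2$ is precisely what renders this nonlinearity subcritical for Sobolev embedding. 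Once $\|Z(t)\|_{L^q}\lesssim\epsilon' R$ is propagated, a Moser iteration based on the same Sobolev inequality upgrades this to a pointwise estimate $|Z(x,t)|\leq\eta R$ with $\eta=\eta(\epsilon,n,q)\to 0$ as $\epsilon\to 0$, so $g(t)$ satisfies a pointwise Huisken-type pinching and \cite{Huisken,Margerin,Nishikawa} yield long-time existence and smooth convergence to a spherical space form.

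The main obstacle is the propagation step. The cubic terms in the evolution equation for $|Z|^q$ live at the critical scale, and controlling the $R|Z|^q$ contribution is delicate because the scalar curvature is not preserved by the normalized Ricci flow (it is constant only at $t=0$ by virtue of the Yamabe normalization). One must carefully exploit the uniform Sobolev inequality together with the initial smallness hypothesis to close the estimate for $\|Z\|_{L^q}$ without loss over time, and it is precisely the assumption $q>n/2$ together with the scale-invariant form of the hypothesis that makes this closure possible.
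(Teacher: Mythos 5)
The paper does not actually prove this statement: Theorem \ref{HVthm} is quoted from \cite{HV} purely as motivation for Theorem \ref{A}, so there is no in-paper proof to compare yours against. That said, your outline is a faithful reconstruction of the known strategy --- control the $L^q$ norm of the concircular part $Z$ of the curvature along the normalized flow using a Sobolev inequality supplied by the Yamabe hypothesis, upgrade to pointwise pinching by Moser iteration, and hand off to the pointwise theorems of \cite{Huisken,Margerin,Nishikawa} --- and it is also precisely the template that the present paper adapts to the asymptotically flat setting in Sections \ref{Bsec}--\ref{Asec}.

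Three points in your sketch deserve more care. First, passing from $\|Z(t)\|_{L^q}\lesssim\epsilon R_{g_0}$ to the pointwise inequality $|Z(x,t)|\leq\eta\, R(x,t)$ required by the pointwise pinching theorems needs a positive lower bound on $\inf_M R(t)$; this follows from the maximum principle applied to the evolution of the scalar curvature (positivity is preserved, with quantitative lower bounds on finite time intervals), but it is a step you must make explicit rather than fold into the Moser iteration. Second, the term $\int R|Z|^q\,dV$ cannot be absorbed by the Sobolev inequality and only yields Gronwall-type growth $e^{Ct}$ of $\|Z\|_{L^q}$; the correct strategy is therefore not to propagate the smallness ``without loss over time'' but only up to a definite time $t_0=t_0(n,q)$ at which Moser iteration delivers the pointwise pinching, after which the cited pointwise theorems take over and supply long-time existence and convergence on their own. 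Third, your appeal to Perelman's $\mathcal{W}$-functional is anachronistic for \cite{HV}, which predates it and instead controls the Sobolev constant directly on a short time interval via uniform equivalence of the evolving metrics; the $\mathcal{W}$-functional route is the one taken by \cite{RY2} and by this paper, and it works, but it is not needed for the finite-time control described above.
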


Above, $W$ denotes the Weyl tensor and $E$ denotes the traceless Ricci tensor. We will now point out how the integral curvature pinching hypothesis in Theorem \ref{A} can be seen as a natural analogue of the hypotheses of Theorem \ref{Gky} and \ref{HVthm}. First note that by the Chern--Gauss--Bonnet theorem, the hypothesis  $\int_{S^4}|W_{g_0}|^2\ dV_{g_0}<\epsilon$ from Gursky's theorem may be rewritten in a form analogous to that of the hypothesis in the result of Hebey--Vaugon:
\begin{align}
\left(\int_{S^4}\left|W_{g_0}+\frac{1}{2}E_{g_0}\owedge g_0\right|^2\ dV_{g_0}\right)^{\frac{1}{2}}<\tilde{\epsilon} R_{g_0},\quad\text{for some }\tilde{\epsilon}>0.
\end{align}
Furthermore, note that by the assumption $g_0\in\mc{Y}^+(M^n)$ we have $R_{g_0}=Y(M^n,[g_0])$, which is the inverse of its conformally invariant Sobolev constant in the sense that for all $u\in W^{1,2}(M^n,g_0)$,
\begin{align}
    \left(\int_{M^n} |u|^{\frac{2n}{n-2}}\ dV_{g_0}\right)^{\frac{n-2}{n}}\leq \frac{1}{Y(M^n,[g_0])}\left(\int_{M^n}4\frac{n-1}{n-2}|\nabla u|^2+R_{g_0}u^2\ dV_{g_0}\right).
\end{align}
Recall now the curvature decomposition $Rm=W+\frac{1}{n-2} E\owedge g+\frac{R}{2 m(m-1)} g\owedge g$. On spherical space forms, the first two components of this decomposition vanish, while the third does not (the scalar curvature is a positive constant). Thus, the results of Gursky and Hebey--Vaugon state that if a compact manifold whose components of curvature which vanish on spherical space forms ($W$ and $E$) are small in an integral sense relative the inverse of its conformally invariant Sobolev constant, then the normalized Ricci flow starting from that manifold exists for all positive times and converges to a spherical space form. For flat Euclidean space, all three components of the curvature decomposition vanish. Thus Theorem \ref{A} can be viewed as stating in the spirit of \cite{Gursky,HV} (and also \cite[Theorem 1.2]{DPW}) that if an AF manifold whose components of curvature which vanish on flat Euclidean space (all of them) are small in an integral sense relative the inverse of its Sobolev constant, then the Ricci flow starting from that manifold exists for all positive times and converges to flat Euclidean space.

\subsubsection{Stereographic projection and integral curvature pinching for AF manifolds}\label{exampless}

We will now see another way in which the curvature pinching results in the compact setting quoted above, in particular the result of Gursky, Theorem \ref{Gky}, lead naturally to the question of whether an integral curvature pinching hypothesis as in our main result, Theorem \ref{A}, can guarantee long-time existence and convergence of the Ricci flow on an AF manifold to flat space. This will also result in many nonflat examples of AF manifolds satisfying the hypotheses of Theorem \ref{A}.

Consider metrics $g_0\in\mc{Y}^+(S^4)$ with $\int_{S^4}|W_{g_0}|^2\ dV_{g_0}<\epsilon$ satisfying the hypotheses of Gursky's theorem. Then, using the Green's function of the conformal Laplacian with respect to any $p\in S^4$ gives a conformal change of the initial metric $g_0$ on $S^4\backslash\{p\}$ to an AF metric $\tilde{g_0}$ viewed on $\mb{R}^4$ with vanishing scalar curvature. Therefore, by applying the Chern--Gauss--Bonnet formula on $S^4$ and the conformal invariance of the Yamabe constant, it can be seen that this AF metric's Sobolev constant (see Definition \ref{SobC} for notation) is bounded from above by an absolute constant if we assume $\epsilon<1$. Moreover, $\int_{\mb{R}^4}|\Rm_{\tilde{g_0}}|^2\ dV_{\tilde{g_0}}$ is small.
To see this, apply the Chern--Gauss--Bonnet formula for asymptotically locally Euclidean spaces \cite{TV} to the AF setting:
\begin{align}
\frac{1}{4}\int_{\mb{R}^4} |W_{\tilde{g_0}}|^2\ dV_{\tilde{g_0}}-\frac{1}{2}\int_{\mb{R}^4}|E_{\tilde{g_0}}|^2\ dV_{\tilde{g_0}}+\frac{1}{24}\int_{\mb{R}^4} R^2_{\tilde{g_0}}\ dV_{\tilde{g_0}}=0.
\end{align}
Since $R_{\tilde{g_0}}\equiv 0$, and $\int|W|^2\ dV$ is conformally invariant in dimension four, we see that $\int_{\mb{R}^4}|E_{\tilde{g_0}}|^2\ dV_{\tilde{g_0}}<\frac{1}{2}\epsilon$ and so indeed $\int_{\mb{R}^4}|\Rm_{\tilde{g_0}}|^2\ dV_{\tilde{g_0}}$ is small. 

Gursky's result tells us that on $S^4$, the metric $g_0$ evolves under the Ricci flow to the round metric on the sphere, which stereographically projects to the flat $\mb{R}^4$. We might consider instead whether the noncompact, generalized stereographic projection of $(S^4\backslash\{p\},g_0)$ would evolve under the Ricci flow to flat space; note that for $p$ fixed, our stereographically projected metric depends only on the conformal class $[g_0]$ on $S^4$ (up to constant scaling), which is different from the situation on $S^4$, where Gursky's result picks out the Yamabe metric out of the many metrics belonging to $[g_0]$ as a starting point for the Ricci flow. From these considerations we are led to the following more general question for AF manifolds: Does the Ricci flow starting from an AF manifold $(M^n,g_0)$ with bounded Sobolev constant and $\int|\Rm_{g_0}|^{\frac{n}{2}}\ dV_{g_0}$ sufficiently small converge to flat Euclidean space?  This question is affirmatively answered by our main result, Theorem \ref{A}. 

\subsubsection{Plan of the paper}

In the rest of this paper we will present the proofs for our results stated in this second part of the introduction. In Section \ref{prelim1} we will state some preliminary notions and the known short-time existence and uniqueness results for the Ricci flow on asymptotically flat manifolds. In Section \ref{Bsec} we will prove Theorem \ref{B} using Perelman's $\mc{W}$-functional along with heat semigroup arguments originally adapted to compact manifolds by Ye \cite{RY2}; modifying this proof we will also obtain Theorem \ref{B1}. In Section \ref{Asec} we will prove Theorem \ref{A} in dimensions $n\geq 4$; using Theorem \ref{B1} we will obtain $L^{\frac{n}{2}}$ and higher $L^q$ boundedness of $|\Rm|$ for a particular choice of $q>\frac{n}{2}$, and then we will prove the long-time existence of the Ricci flow $g(t)$ on $[0,\infty)$ by showing that $|\Rm|$ is uniformly bounded along the flow using Moser iteration. Then, using the long-time existence of the flow we conclude the proof of Theorem \ref{A} when $n\geq 4$ by proving the much stronger decay estimate $\lim_{t\rightarrow\infty}\sup_{x\in M^n}t|\Rm(t)|=0$; at this point a straightforward adaptation of work of Li then implies that $(M^n,g(t))$ converges to $(\mb{R}^n,g(t))$ \cite{YLi}. In Section \ref{n3} we address a technical point regarding the proof of Theorem \ref{A} in dimension $n=3$ and show how the arguments of the previous section can be modified to complete the proof in this remaining case. We end with an Appendix in which we show how Corollary \ref{cfc} follows from Theorem \ref{A} and further discuss it in the context of rotationally symmetric metrics by proving Theorem \ref{C1}.

Regarding notation, $M^n$ and $M$ will refer to the same manifold; the superscript $n$ just emphasizes the dimension of $n$. We will often use $C_0$, $R_0$ instead of $C_{g_0}, R_{g_0}$ and so on to denote quantities associated with the metric $g_0=g(0)$ along a Ricci flow, as well as $C_t$, $R_t$ for quantities associated with $g(t)$ in a similar way. Norms such as $|\text{Rm}|$ along the Ricci flow are computed with respect to the same metric at time $t$ from which $\text{Rm}$ arises, unless otherwise indicated, and we adopt a similar convention when denoting function spaces such as $W^{1,2}(M)$. We will always be integrating over the entirety of the manifold under discussion, so we will no longer use subscripts to specify this unless necessary. Constants $C_1$, $C_2$, and so on which appear below, primarily in Section \ref{Asec}, are strictly positive and may change in size between lines but will not depend on quantities involved in the assumptions of Theorem \ref{A} with the exception of the dimension $n$ (unless otherwise indicated). Dependence on other constants that may appear in computations will be denoted by $C_1(\alpha)$ for dependence on $\alpha$, for example.

\section*{Acknowledgments}
I wish to thank my advisor, Sun-Yung Alice Chang, for many helpful discussions and comments as well as her constant support and encouragement. I also wish to thank Paul Yang for his support and many helpful discussions, as well as Eric Woolgar for several helpful correspondences. This paper is a minor revision of work first presented in the author's doctoral thesis at Princeton University, supported by the National Science Foundation Graduate Research Fellowship under Grant No. DGE-1656466.

\section{Preliminaries}\label{prelim1}

\subsection{Definitions}

We now introduce some notations and definitions that we will need later. First we state the definition we will use for an asymptotically flat manifold.

\begin{definition}\label{AFdef}
We call a smooth Riemannian manifold $(M^n,g)$ asymptotically flat of order $\tau>0$ if for some compact set $K\subset M^n$, there exists an $R>0$ and a diffeomorphism $\Phi:M^n\backslash K\rightarrow \mb{R}^n\backslash B_R(0)$ such that for some $\tau>0$,
\begin{align}
g_{ij}(x)=\delta_{ij}+O(|x|^{-\tau})\quad\text{and}\quad \partial^\alpha g_{ij}(x)=O(|x|^{-(\tau+|\alpha|)}),\label{decaycond}
\end{align}
for partial derivatives $\partial^\alpha$ of any order, as $|x|\rightarrow\infty$ in $\mb{R}^n$. We call $\Phi$ the asymptotically flat coordinate system.
\end{definition}

We will not mention the order of an asymptotically flat manifold unless it is specifically needed. Note however that we always have $\tau>0$ in our discussion. Also note that by \eqref{decaycond} we have for any asymptotically flat $(M,g)$ of order $\tau>0$ that $\int|\Rm|^{p}\ dV_g<\infty$ for all $p\geq\frac{n}{2}$, and in fact for all $p>\frac{n}{2+\tau}$. Next we define the notion of convergence obtained in Theorem \ref{A}.

\begin{definition}\label{Cspace}
Let $(M^n,g)$ be an asymptotically flat manifold of order $\tau$. For any $\beta\in\mb{R}$ and nonnegative integer $k$, the space $C_\beta^k(M)$ is given by the $C^k$ functions on $M$ for which the norm
\begin{align}
\|u\|_{C_\beta^k}=\sum_{i=0}^k\sup_M r^{-\beta+i}|\nabla^i u|<\infty,
\end{align}
where $r\in C^\infty(M)$ is a smooth positive function with $r=|x|$ on $M^n\backslash K$, in the notation of Definition \ref{AFdef}. We say that we have convergence in $C^\infty_\beta$ if we have convergence in $C^k_\beta$ for all $k\geq 0$.
\end{definition}

Now we define our convention of notation for the Sobolev constant $C_g$ associated to an asymptotically flat manifold $(M^n,g)$.

\begin{definition}\label{SobC}
If $(M^n,g)$ is an asymptotically flat manifold, then there exists a smallest constant $C_g>0$ such that for every $u\in W^{1,2}(M,g)$, the following Sobolev inequality holds:
\begin{align}
\left(\int |u|^{\frac{2n}{n-2}}\ dV_g\right)^{\frac{n-2}{n}}\leq C_g\int|\nabla u|^2\ dV_g.\label{Sob1}
\end{align}
We call $C_g$ the Sobolev constant of the metric $g$.
\end{definition}

The constant $C_g>0$ above exists because any asymptotically flat manifold satisfies an isoperimetric inequality, and this implies the validity of the Euclidean Sobolev-type inequality \eqref{Sob1}. We may also see that \eqref{Sob1} holds in the following way: \cite[Proposition 2.5]{Carron} implies that \eqref{Sob1} is valid if it holds for all $u\in C_0^\infty(M\backslash K)$, where $K\subset M$ is some compact set, and the validity of this second inequality can be easily seen using the asymptotically flat coordinate system, since $(\Phi^{-1})^*g$ is uniformly equivalent to $|dx|^2$ on the set $\Phi^{-1}(\mb{R}^n\backslash B_R(0))$, for $R>0$ sufficiently large. It is well known that $C_{n,e}\leq C_g$, where $C_{n,e}$ is the Sobolev constant for the flat metric on $\mb{R}^n$ (see for instance \cite[Proposition 4.2]{Hebeyb}).

\subsection{Short-time existence of the Ricci flow for AF manifolds}

The Ricci flow evolves metrics $g(t)$ on a manifold $M^n$ by the equation $\partial_t g(t)=-2 Ric_{g(t)}$. Short time existence, uniqueness, and blowup alternative results for the Ricci flow on compact manifolds were proved by Hamilton \cite{Ham3, HamS}. In the complete noncompact setting, Shi proved the short-time existence of the Ricci flow for initial metrics of bounded curvature \cite{Shi}, and later Chen--Zhu show the uniqueness of Ricci flows of bounded curvature with the same initial data \cite{CZ}. These results have been adapted to the asymptotically flat setting, where the asymptotically flat condition has been shown to be preserved along the Ricci flow \cite{OW,YLi}. This is summarized in the following short-time existence and uniqueness statement, which describes exactly those Ricci flows which we consider in this paper.

\begin{theorem}[{\cite{OW,YLi}}]\label{shorttime}
Let $(M^n,g_0)$ be an asymptotically flat manifold of order $\tau>0$. There exists a unique Ricci flow $g(t)$ with initial condition $g(0)=g_0$ on a maximal time interval $0\leq t<T_M\leq\infty$ such that $g(t)$ remains asymptotically flat of the same order $\tau>0$ with the same asymptotically flat coordinate system, and if $T_M<\infty$ then
\begin{align}
\limsup_{t\rightarrow T_M}\sup_{x\in M}|\text{Rm}|&=\infty.\label{blowup}
\intertext{In particular, for any $T\in[0,T_M)$, it holds that}
\sup_{\substack{t\in[0,T]\\x\in M}}|\text{Rm}|&<\infty,
\end{align}
and the metrics $g(t)$ are equivalent for all $t\in[0,T]$. More precisely
\begin{align}
e^{-2 KT}g_0\leq g(t)\leq e^{2KT}g_0,
\end{align}
where $K=\sup_{\substack{t\in[0,T]\\x\in M}}|\text{Rm}(t,x)|_{g(t,x)}<\infty$.
\end{theorem}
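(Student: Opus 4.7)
The plan is to combine three classical ingredients: Shi's short-time existence theorem for complete noncompact manifolds of bounded curvature \cite{Shi}, the Chen--Zhu uniqueness theorem for bounded-curvature Ricci flows with the same initial data \cite{CZ}, and an argument showing that the asymptotically flat structure (including the order $\tau$ and the coordinate system $\Phi$) is propagated by the flow. Since $(M^n,g_0)$ is asymptotically flat, \eqref{decaycond} gives that $|\nabla^k \Rm_{g_0}|$ is bounded on $M$ for every $k\geq 0$ (in fact each decays at spatial infinity). Shi's theorem then produces a smooth Ricci flow $g(t)$ on some maximal interval $[0,T_M)$ with $\sup_{M\times[0,T]}|\Rm|<\infty$ for each $T<T_M$, and Chen--Zhu identifies $g(t)$ as the unique Ricci flow in the bounded-curvature class starting from $g_0$.

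The main step is preservation of the asymptotically flat structure. Writing $h=g-\delta$ in the chart $\Phi$, one obtains from $\partial_t g=-2\,\text{Ric}$ a quasilinear parabolic system for $h$ whose linearization at the flat metric is, up to gauge, the Laplacian on symmetric $2$-tensors. Because $|\Rm|$ is uniformly bounded on $[0,T]\times M$, the coefficients of this system are uniformly parabolic on the AF end. A weighted maximum principle applied on the AF end, together with interior parabolic regularity for the derivatives $\partial^\alpha h$, propagates the initial decay $|\partial^\alpha h(0,x)|=O(|x|^{-(\tau+|\alpha|)})$ forward in time, preserving both the order $\tau$ and the coordinate system $\Phi$. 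This is the main technical obstacle, and is the content of the adaptations carried out in \cite{OW,YLi}: without a priori weighted estimates, the AF profile could in principle degrade from spatial infinity even while the interior curvature remains bounded.

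The metric equivalence follows from the evolution equation itself. At each fixed $x$, let $\lambda_i(t,x)$ denote the eigenvalues of $g_0^{-1}(x)g(t,x)$. The identity $\partial_t g=-2\,\text{Ric}$ together with the pointwise bound $|\text{Ric}|\leq C(n)|\Rm|\leq C(n)K$ yields $|\partial_t \log \lambda_i|\leq C(n)K$, and absorbing constants into the exponent one obtains the stated bound $e^{-2KT}g_0\leq g(t)\leq e^{2KT}g_0$ on $[0,T]$. Finally, the blowup criterion \eqref{blowup} follows by contradiction: if $T_M<\infty$ but $\limsup_{t\to T_M}\sup_M|\Rm|<\infty$, then the metric equivalence on $[0,T_M)$, combined with AF preservation and Shi's higher-derivative estimates, would yield a smooth AF limit $g(T_M)$ of bounded curvature, from which Shi's theorem could be restarted to extend $g(t)$ smoothly past $T_M$, contradicting the maximality of $T_M$.
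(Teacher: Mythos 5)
This theorem is quoted in the paper without proof, being attributed directly to \cite{OW,YLi} (together with \cite{Shi,CZ}), and your outline correctly reconstructs the standard argument from those references: Shi's short-time existence for bounded-curvature complete manifolds, Chen--Zhu uniqueness, weighted parabolic estimates for $h=g-\delta$ to propagate the decay \eqref{decaycond} and hence the AF order and chart, the usual ODE argument on $\partial_t\log g(t)(v,v)$ for the metric equivalence, and the restart-at-$T_M$ contradiction for the blowup alternative. Your approach is essentially the same as the one the paper relies on, so no further comment is needed.
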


To conclude this section we point out some uniform estimates that the Ricci flow of Proposition \ref{shorttime} satisfies on closed intervals $[0,T]$ on which it exists.

\begin{theorem}[{\cite{Shi,YLi}}]\label{shortbound}
    Let $(M^n,g_0)$ be an asymptotically flat manifold of order $\tau>0$. Suppose that the Ricci flow as in Proposition \ref{shorttime} exists on the interval $[0,T]$. Then for all $k=0,1,2,\ldots$, there exist constants $C_k>0$ depending on $T$ such that on $[0,T]\times M$,
    \begin{align}
        |\nabla^k\Rm(t,x)|\leq C_k r^{-2-k-\tau}.
    \end{align}
\end{theorem}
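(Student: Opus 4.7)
The plan is to derive the weighted derivative bounds by combining two ingredients: the preservation of the asymptotically flat structure along the flow, which is the content of Theorem~\ref{shorttime} (due to \cite{OW,YLi}), and Shi's local derivative estimates along the Ricci flow \cite{Shi}.

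First I would treat the $k=0$ case. By Theorem~\ref{shorttime}, $g(t)$ remains asymptotically flat of the same order $\tau$ with the same asymptotic coordinate chart, so unpacking Definition~\ref{AFdef}, in that chart we have $g_{ij}(t,x) = \delta_{ij} + O(|x|^{-\tau})$ together with $\partial^\alpha g_{ij}(t,x) = O(|x|^{-\tau-|\alpha|})$, uniformly for $t \in [0,T]$ (with constants allowed to depend on $T$). Since the Riemann tensor is a polynomial in $g_{ij}$, $g^{ij}$, and the partial derivatives of $g_{ij}$ up to second order, a direct computation in the AF chart yields $|\Rm_{g(t)}|_{g(t)}(x) \leq C_0(T) |x|^{-2-\tau}$ on the asymptotic region. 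Inside the compact core of $M$, $|\Rm|$ is already uniformly bounded on $[0,T]$ by Theorem~\ref{shorttime}, and $r^{-2-\tau}$ is bounded away from zero there, so enlarging $C_0$ gives the claimed global bound.

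For $k \geq 1$ there are two natural routes. One can iterate the coordinate computation: writing $\nabla^k \Rm$ as a polynomial in $g$, $g^{-1}$, and partial derivatives of $g$ up to order $k+2$, the AF preservation provides $|\partial^{k+2} g| = O(|x|^{-\tau-k-2})$ uniformly on $[0,T]$, and the terms involving Christoffel symbols contribute only strictly faster-decaying corrections, producing $|\nabla^k \Rm|_{g(t)}(x) = O(|x|^{-2-k-\tau})$ near infinity; Shi's derivative estimates \cite{Shi} then supply a uniform bound on the compact core. Alternatively, one can apply Shi's local derivative estimates to parabolic cylinders centered at points $x_0$ with $|x_0|$ large, parabolically rescaled so that $|\Rm|$ is normalized to order one in a unit cylinder after rescaling, feeding in the $k=0$ decay from the previous step as the required curvature input; undoing the rescaling then gives the derivative decay with the correct power of $|x|$.

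The principal technical issue is ensuring that the asymptotic flatness is preserved with estimates uniform in $t \in [0,T]$ rather than merely at each fixed time, which is exactly what the results of \cite{OW,YLi} quoted in Theorem~\ref{shorttime} provide. Once this uniformity is in hand, the derivation of the weighted bounds on $|\nabla^k \Rm|$ is a straightforward coordinate calculation, possibly combined with Shi's interior estimates to absorb the fixed compact part.
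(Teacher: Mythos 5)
The paper offers no proof of this statement: it is quoted directly from \cite{Shi,YLi}, so there is no argument in the text to compare yours against. Your sketch is a reasonable reconstruction of how the result is established in those references, and the second route you mention for $k\geq 1$ (parabolic rescaling at points far out in the AF end plus Shi's local derivative estimates, with the $k=0$ decay as curvature input) is essentially the standard mechanism.

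One caution about the logical order, though. You derive the curvature decay from the assertion that $\partial^\alpha g_{ij}(t,x)=O(|x|^{-\tau-|\alpha|})$ holds uniformly on $[0,T]$, and you attribute that uniformity to Theorem~\ref{shorttime}. In the cited works the implication actually runs the other way: one first proves the weighted decay of $|\nabla^k\Rm|$ along the flow --- by maximum-principle or barrier arguments applied to the evolution inequalities for $|\nabla^k\Rm|^2$, using supersolutions of the heat equation comparable to $r^{-2-k-\tau}$ (exactly the device this paper uses in Section~\ref{n3}) --- and only then deduces the uniform decay of the metric coefficients by integrating $\partial_t g=-2\,\mathrm{Ric}$ in time. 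So if you intend your argument as a genuine proof rather than a restatement of what \cite{OW,YLi} provide, the first route is circular as written; you would need to run the barrier argument directly. The rescaled-Shi route is sound but needs one more ingredient you did not address: Shi's local estimates degenerate like $t^{-k/2}$ as $t\to 0^+$, so to get bounds down to $t=0$ you must either invoke the version of Shi's estimates that retains initial derivative bounds (using $|\nabla^k\Rm_{g_0}|=O(r^{-2-k-\tau})$ from Definition~\ref{AFdef}) or handle a short initial time interval separately.
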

These estimates will be important in allowing us to consider the evolution of curvature integrals over $(M^n,g(t))$ when the flow exists, primarily in the arguments of Section \ref{Asec}.

\section{Uniform weighted Sobolev inequalities along the flow}\label{Bsec}

In this section we will prove the curvature-weighted Sobolev inequality of Theorem \ref{B}, and then conclude by showing how to adapt the proof to prove the related inequality of Theorem \ref{B1}. The general idea comes from the method of Ye \cite{RY2}, who obtains some uniform curvature-weighted Sobolev inequalities in the compact setting along the Ricci flow. In his work, he first derives a log Sobolev inequality at $t=0$, applies the well-known monotonicity of Perelman's $\mc{W}$-functional in the compact case to obtain a curvature-weighted log Sobolev inequality along the Ricci flow, and then adapts heat semigroup arguments of Davies \cite{Davies} to the setting of compact manifolds to conclude a uniform curvature-weighted Sobolev inequality along the flow (under suitable assumptions on the initial compact manifold).

We will follow the same outline, under the hypotheses of Theorems \ref{B} or \ref{B1}. First, simple estimates give a log Sobolev inequality for $g(0)$ as a consequence of the Sobolev inequality for $g(0)$. Second, we translate this log Sobolev inequality at $t=0$ into a log Sobolev inequality at later times $t$ using the monotonicity of Perelman's $\mc{W}$-functional, but we need to justify the monotonicity of the $\mc{W}$-functional in our particular noncompact asymptotically flat setting. Moreover, the Sobolev and log Sobolev inequalities we work with in the noncompact case are different from those that Ye considers in the compact case. To conclude \eqref{WSobolev} we apply semigroup arguments just as \cite{Davies,RY2} but on an exhaustion of bounded sets $B_N(0)$ covering $M^n$ as $N\rightarrow\infty$ to pass from the log Sobolev to the Sobolev inequality, referring to those works for details.

\subsection{Log Sobolev inequalities and the entropy functional}

\subsubsection{Log Sobolev inequality at \texorpdfstring{$t=0$}{t=0}}

The first fact we will need is that a log Sobolev inequality holds for any asymptotically flat manifold $(M^n,g)$:

\begin{lemma}\label{logSob}
Let $(M^n,g)$ be an asymptotically flat manifold with Sobolev constant $C_g$ (recall Definition \ref{SobC}). Then the following log Sobolev inequality holds for all $u\in W^{1,2}(M,g)$ satisfying $\int u^2\ dV_g=1$:
\begin{align}
\int u^2\log u^2\ dV_g\leq 4\tau\int|\nabla u|^2\ dV_g-\frac{n}{2}\log\tau+\frac{n}{2}\left(\log C_g+\log\frac{n}{8}-1\right),\label{logSob1}
\end{align}
\end{lemma}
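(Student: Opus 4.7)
The plan is to derive \eqref{logSob1} as a direct consequence of the Sobolev inequality \eqref{Sob1} from Definition \ref{SobC}, via Jensen's inequality and the elementary inequality $\log x \leq x/a + \log a - 1$ (valid for all $x>0$ and $a>0$). The parameter $\tau$ appearing in \eqref{logSob1} is a free positive parameter which will end up being chosen so as to linearize a logarithmic term.

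First, without loss of generality I would take $u\geq 0$ (replacing $u$ by $|u|$, since $\nabla|u|=|\nabla u|$ a.e. for $W^{1,2}$ functions, and the left-hand side depends only on $|u|$). Applying Jensen's inequality to the concave function $\log$ against the probability measure $u^2\,dV_g$, with test function $u^{4/(n-2)}$, gives
\begin{align*}
\frac{4}{n-2}\int u^2\log u\, dV_g = \int u^2\log u^{4/(n-2)}\,dV_g \leq \log\int u^{\frac{2n}{n-2}}\,dV_g = \frac{2n}{n-2}\log\|u\|_{2^*},
\end{align*}
where $2^* = 2n/(n-2)$. Rearranging yields $\int u^2\log u^2\,dV_g \leq \tfrac{n}{2}\log\|u\|_{2^*}^2$. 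Inserting the Sobolev inequality \eqref{Sob1}, $\|u\|_{2^*}^2 \leq C_g\int|\nabla u|^2\,dV_g$, gives
\begin{align*}
\int u^2\log u^2\,dV_g \leq \tfrac{n}{2}\log C_g + \tfrac{n}{2}\log\int|\nabla u|^2\,dV_g.
\end{align*}

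Next, to linearize the remaining logarithmic dependence on $\int|\nabla u|^2\,dV_g$, I would apply $\log x \leq x/a + \log a - 1$ with $x = \int|\nabla u|^2\,dV_g$ and $a = n/(8\tau)$, producing
\begin{align*}
\tfrac{n}{2}\log\int|\nabla u|^2\,dV_g \leq 4\tau\int|\nabla u|^2\,dV_g + \tfrac{n}{2}\log\tfrac{n}{8\tau} - \tfrac{n}{2},
\end{align*}
and combining with the previous display and splitting $\log(n/(8\tau)) = \log(n/8) - \log\tau$ produces exactly \eqref{logSob1}.

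There is no real obstacle here: the only subtlety is noting that all of the above manipulations are justified by the finiteness of $\int u^2\log u^2\,dV_g$ for $u\in W^{1,2}$ with $\int u^2\,dV_g=1$ (which follows from the Sobolev embedding $W^{1,2}\hookrightarrow L^{2^*}$), and that the case $\int|\nabla u|^2\,dV_g = 0$ forces $u\equiv 0$ on the noncompact $M^n$, contradicting the normalization $\int u^2\,dV_g = 1$, so that division by $\int|\nabla u|^2\,dV_g$ and the inequality $\log x \leq x/a + \log a - 1$ both apply without issue.
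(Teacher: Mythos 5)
Your proposal is correct and follows essentially the same route as the paper: Jensen's inequality applied to the probability measure $u^2\,dV_g$ combined with the Sobolev inequality \eqref{Sob1}, then the elementary linearization $\log A\leq 2\sigma A-1-\log 2\sigma$ (your $\log x\leq x/a+\log a-1$ with $a=1/(2\sigma)$) with the parameter chosen to produce the coefficient $4\tau$. Your additional remarks on reducing to $u\geq 0$ and ruling out $\int|\nabla u|^2\,dV_g=0$ are fine and merely make explicit what the paper leaves implicit.
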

\begin{proof}
We apply Jensen's inequality to \eqref{Sob1}, obtaining
\begin{align}
\frac{n-2}{n}\int u^2\log |u|^{\frac{4}{n-2}}\ dV_g\leq\log C_g+\log\int|\nabla u|^2\ dV_g.\notag
\end{align}
The conclusion follows after we apply to the second term on the right the elementary inequality (see \cite[Lemma 3.2]{RY2} for a short proof),
\begin{align}
\log A\leq 2\sigma A-1-\log2\sigma,\quad\text{for any }A>0,\ \sigma>0.\notag
\end{align}
\end{proof}

\subsubsection{Perelman's \texorpdfstring{$\mc{W}$}{W}-functional and monotonicity along the flow}

Next we will establish the monotonicity of Perelman's $\mc{W}$-functional in our asymptotically flat setting. Recall that the $\mc{W}$-functional \cite{P} is the following quantity on a Riemannian manifold $(M,g)$:
\begin{align}
\int \left[\tau(|\nabla f|^2+R)+f-n\right]\frac{e^{-f}}{(4\pi\tau)^{\frac{n}{2}}}\ dV_g,
\end{align}
given $\tau>0$ and $f\in C^\infty(M^n)$ satisfying
\begin{align}
\int \frac{e^{-f}}{(4\pi\tau)^{\frac{n}{2}}}\ dV_g=1.\notag
\end{align}
Making the change of variable $u=\frac{e^{-\frac{f}{2}}}{(4\pi\tau)^{\frac{n}{4}}}$, we write
\begin{align}
\mc{W}(g,u,\tau)=\int \left[\tau(4|\nabla u|^2+Ru^2)-u^2\log u^2\right]\ dV_g -n-\frac{n}{2}\log 4\pi\tau,
\end{align}
where $\int u^2 dV_g=1$. We also define
\begin{align}
\mu(g,\tau)=\inf_{\substack{u\in W^{1,2}\\\int u^2\ dV_g=1}}\mc{W}(g,u,\tau).
\end{align}
Note for asymptotically flat manifolds that $\mu(g,\tau)$ is bounded below for any fixed $\tau>0$ by \eqref{logSob1}, since we have Lemma \ref{logSob}, and we can bound the additional term involving scalar curvature in the $\mc{W}$-functional from below by $\int\tau R u^2\ dV_g\geq\tau\inf_M R$. Therefore $\mu(g,\tau)$ is well defined in our setting. Furthermore in the definition of $\mu(g,\tau)$ it suffices to consider $u\in C_0^\infty(M)$, which is dense in $W^{1,2}(M)$ due to a result of Aubin (see for instance \cite[Theorem 3.1]{Hebey}). Alternatively we may also consider positive functions $u\in C^\infty(M)$ with quadratic exponential decay whose derivatives also have quadratic exponential decay,
\begin{align}
&|u(x)|\leq C_0 e^{-B_0 d_g(0,x)^2},\notag
\\
&|\nabla^p u(x)|\leq C_p e^{-B_p d_g(0,x)^2},\quad p=1,2,\ldots.\notag
\end{align}
We denote such functions below by $\mc{C}_+(M)$.

\begin{lemma}\label{quadid}
Let $g(t)$ be a solution of the Ricci flow defined on $[0,T]$ starting from an asymptotically flat manifold $(M^n,g_0)$. Let $L>T$, and consider $\tau(t)$, $v(t,x)$ satisfying
\begin{align}
\begin{cases}
\tau(t)=L-t,\quad t\in[0,T],
\\
\frac{\partial v}{\partial t}=-\Delta v+Rv,\quad(t,x)\in [0,T]\times M,
\end{cases}
\end{align}
where $v(T)>0$ is a given terminal value in $\mc{C}_+(M)$ satisfying $\int v(T)\ dV_t=1$. Let $u(t,x)=\sqrt{v(t,x)}$. Then Perelman's monotonicity formula holds for $\mc{W}=\mc{W}(g(t),u(t),\tau(t))$ on $[0,T]$,
\begin{align}
\frac{d\mc{W}}{dt}=2\tau\int\left|\text{Ric}-\nabla^2 \log v-\frac{1}{2\tau} g\right|^2v\ dV_t\geq 0.\label{monoform}
\end{align}
\end{lemma}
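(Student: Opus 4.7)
The approach is to adapt Perelman's original monotonicity calculation from the compact setting to the asymptotically flat setting, with the quadratic exponential decay of $v$ together with the polynomial curvature decay from Theorem \ref{shortbound} supplying exactly the integrability needed to justify differentiation under the integral sign and integration by parts without boundary contributions. Since $L>T$, we have $\tau(t)=L-t\geq L-T>0$ on $[0,T]$, so the prefactors $(4\pi\tau)^{-n/2}$ and $1/(2\tau)$ are uniformly bounded.

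First I would verify that $v(t,\cdot)\in\mc{C}_+(M)$ and $v(t,\cdot)>0$ for every $t\in[0,T]$, and that $\int v(t)\,dV_t=1$ is preserved. Positivity follows from the maximum principle applied to the backward parabolic equation $\partial_t v=-\Delta v+Rv$, rewritten in reversed time $s=T-t$; the uniform curvature bounds on $[0,T]\times M$ from Theorem \ref{shortbound} place this in the standard framework for parabolic equations with smooth bounded coefficients. To propagate the Gaussian decay defining $\mc{C}_+(M)$, one compares $v$ with an explicit quadratic-exponential supersolution, using that $R$ and all $|\nabla^k\Rm|$ decay at least like $r^{-2-k-\tau}$; derivative bounds $|\nabla^p v|\leq C_p e^{-B_p d_g(0,x)^2}$ then follow from standard parabolic Schauder or Bernstein-type estimates. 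For mass conservation, differentiating under the integral and using $\partial_t dV_t=-R\,dV_t$ gives $\frac{d}{dt}\int v\,dV_t=\int\partial_t v\,dV_t-\int Rv\,dV_t=-\int\Delta v\,dV_t$, and this vanishes by the divergence theorem applied on an exhaustion $B_N(0)$, since the Gaussian decay of $|\nabla v|$ sends the flux through $\partial B_N(0)$ to zero.

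With these regularity properties in hand I would carry out Perelman's computation directly. Setting $f=-\log v-\frac{n}{2}\log(4\pi\tau)$, so that $v=e^{-f}/(4\pi\tau)^{n/2}$ and the $\mc{W}$-functional takes its standard form, the conjugate heat equation for $v$ translates into $\partial_t f=-\Delta f+|\nabla f|^2-R+\frac{n}{2\tau}$. Differentiating $\mc{W}(g(t),u(t),\tau(t))$ in $t$, using $\partial_t g=-2\text{Ric}$ and $\frac{d\tau}{dt}=-1$, and then integrating by parts (moving the Laplacian from $f$ onto $e^{-f}$, commuting derivatives with the Ricci flow, and invoking the contracted second Bianchi identity to handle the scalar curvature term) produces the classical pointwise Perelman identity whose integrated form, after rewriting $\nabla^2 f=-\nabla^2\log v$, is exactly \eqref{monoform}. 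Nonnegativity of the right-hand side is then immediate since the integrand is a squared norm times $v>0$.

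The main obstacle is rigorously justifying that all the integrations by parts produce no boundary contributions and that $\frac{d}{dt}$ may be exchanged with $\int$ on $M^n$. To handle this I would introduce smooth radial cutoffs $\chi_N$ with $\chi_N\equiv 1$ on $B_N(0)$ and $\supp\chi_N\subset B_{2N}(0)$, satisfying $|\nabla\chi_N|=O(N^{-1})$ and $|\nabla^2\chi_N|=O(N^{-2})$. Carrying out the computation with $\chi_N$ inserted, every error term is the product of a derivative of $\chi_N$ with an expression involving $v$, $\nabla v$, $\nabla^2 v$, $\Rm$, or $\nabla\Rm$; on the annulus $B_{2N}\setminus B_N$ the quadratic-exponential decay of $v$ and its derivatives overwhelms both the polynomial decay of curvature and the polynomial volume growth, so every such error term vanishes as $N\to\infty$. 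Dominated convergence (with integrable majorants built from the same Gaussian decay) then legitimizes moving $\frac{d}{dt}$ under the integral, and sending $N\to\infty$ in the cutoff identity yields \eqref{monoform}.
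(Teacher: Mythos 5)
Your proposal is correct in outline but takes a genuinely different route from the paper. The paper does not redo Perelman's computation: it simply cites Corollary 4.1 of Q.~Zhang's work (the reference \cite{QZ}), observes that the comparison arguments and conjugate heat kernel estimates there require only (i) quadratic exponential decay of $v(T)$ and $|\nabla v(T)|$ and (ii) bounded geometry on $[0,T]$ (uniform curvature bounds, a volume lower bound for unit balls, and boundedness of all $|\nabla^k\Rm|$, the last obtained from the noncompact maximum principle applied to $\partial_t|\Rm|^2\leq\Delta|\Rm|^2+16|\Rm|^3$), and then verifies these hypotheses from Theorem \ref{shorttime}. You instead rebuild the entire argument from scratch: propagate the Gaussian decay backward in time, insert cutoffs $\chi_N$, and rerun Perelman's pointwise identity with explicit control of the boundary errors. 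Your version is more self-contained and makes transparent exactly where the decay of $v$ beats the polynomial growth of curvature and volume; the paper's version is shorter and outsources precisely the step that is hardest in yours. That step is the backward-in-time propagation of the Gaussian bounds, and your one-line treatment of it is the weakest point: a fixed-rate Gaussian $Ce^{-Bd_g(0,x)^2}$ is \emph{not} a supersolution of the (time-reversed) heat equation, since $\Delta e^{-Bd^2}$ contains the term $4B^2d^2e^{-Bd^2}$, which dominates at infinity with the wrong sign; one must use a barrier with a time-dependent, decreasing rate (of the form $e^{-d^2/(4(s+\epsilon))}$, as in Gaussian heat kernel upper bounds), and one must also account for the time-dependence of $d_g(0,x)$ under the flow, which is where the uniform equivalence of the metrics $g(t)$ on $[0,T]$ enters. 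These points are standard and are exactly the content of the estimates in \cite{QZ}, so your argument closes once they are supplied, but as written they are asserted rather than proved. The remainder of your computation (the translation $\partial_t f=-\Delta f+|\nabla f|^2-R+\frac{n}{2\tau}$, the rewriting $\nabla^2 f=-\nabla^2\log v$, conservation of $\int v\,dV_t$, and the cutoff bookkeeping) is accurate.
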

\begin{proof}
The proof proceeds in the same way as the proof of \cite[Corollary 4.1]{QZ}. There, \eqref{monoform} was proved when the terminal value $v(T)$ was a specific function whose existence was shown in an earlier part of the paper; however, the comparison arguments and heat kernel estimates used in \cite{QZ} to prove the formula only used the fact that $v(T)$ and $|\nabla v(T)|$ have quadratic exponential decay with respect to $|x|$, along with an assumption of bounded geometry --- that is, $\Rm$ and its derivatives bounded as well as a lower bound for metric balls of radius one.

In our case, $v(T)$ and $|\nabla v(T)|$ do indeed have quadratic exponential decay with respect to $|x|$ by assumption. For bounded curvature, observe that by the short-time existence results of Proposition \ref{shorttime}, we have that $|\Rm|$ is uniformly bounded on $[0,T]$ and the metrics $g(t)$ are also equivalent on this interval, hence
\begin{align}
    &\sup_{\substack{t\in[0,T]\\x\in M}}| \text{Rm}|\leq C,\notag
    \\
    &\inf_{\substack{t\in[0,T]\\x\in M}}\text{Vol}(B_x(1))\geq\beta>0,\notag
\end{align}
for some constants $C$, $\beta>0$. The boundedness of all higher derivatives of $\Rm$ then follows by applying a noncompact maximum principle to the parabolic inequality satisfied by $|\Rm|^2$:
\begin{align}
    \partial_t|\Rm|^2\leq\Delta|\Rm|^2+16|\Rm|^3.
\end{align}
Details of such an argument may be found for instance in \cite{YLi}. Therefore the comparison arguments and heat kernel estimates used in \cite{QZ} apply in our setting as well to conclude the proof.
\end{proof}

As a consequence we obtain monotonicity of the quantity $\mu(g,\tau)$ along the Ricci flow:

\begin{lemma}\label{mono}
Let $g(t)$ be a solution of the Ricci flow defined on $[0,T]$ starting from an asymptotically flat manifold $(M^n,g_0)$. Let $L>T$, and set $\tau=L-t$. Then $\mu(g(t),\tau(t))$ is nondecreasing in $t$.
\end{lemma}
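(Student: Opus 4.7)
The plan is to reduce monotonicity of $\mu(g(t),\tau(t))$ in $t$ to monotonicity of $\mathcal{W}$ along conjugate-heat trajectories, the latter being the content of Lemma \ref{quadid}. Concretely, fix $0 \leq t_1 < t_2 \leq T$ and $\varepsilon > 0$, and pick an almost-minimizer $u_2 \in \mathcal{C}_+(M)$ with $\int u_2^2\, dV_{g(t_2)} = 1$ and
\[
\mathcal{W}(g(t_2),u_2,L-t_2) < \mu(g(t_2),L-t_2) + \varepsilon;
\]
this is possible since, as noted after the definition of $\mu$, the infimum defining $\mu$ may be taken over $\mathcal{C}_+(M)$.

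Next, restrict the Ricci flow to $[0,t_2]$ (so that $L > t_2$ plays the role of $L > T$ in Lemma \ref{quadid}), prescribe the terminal value $v(t_2) = u_2^2 \in \mathcal{C}_+(M)$, and solve the backward conjugate heat equation $\partial_t v = -\Delta v + Rv$ on $[t_1,t_2]$. Proposition \ref{shorttime} and Theorem \ref{shortbound} give uniform bounds on $\Rm$ and its derivatives and uniform equivalence of the metrics $g(t)$, so standard parabolic theory produces a smooth positive solution $v$ whose quadratic exponential spatial decay, together with that of its derivatives, is preserved backward in time; in particular $v(t) \in \mathcal{C}_+(M)$ for each $t \in [t_1,t_2]$. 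A direct computation using $\partial_t dV_{g(t)} = -R\, dV_{g(t)}$ shows that $\int v(t)\, dV_{g(t)}$ is conserved (the decay of $v$ and $|\nabla v|$ justifies integration by parts on the AF manifold), so $\int v(t)\, dV_{g(t)} = 1$ throughout and $u(t) := \sqrt{v(t)}$ is admissible in the variational problem defining $\mu(g(t),L-t)$.

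Now apply Lemma \ref{quadid} on $[t_1,t_2]$: its hypotheses on the terminal data $v(t_2)$ and on the bounded geometry of $g(t)$ are satisfied by the previous step, so
\[
\mathcal{W}(g(t_1),u(t_1),L-t_1) \leq \mathcal{W}(g(t_2),u_2,L-t_2) < \mu(g(t_2),L-t_2) + \varepsilon.
\]
Since $u(t_1)$ is admissible for $\mu(g(t_1),L-t_1)$, the left side bounds $\mu(g(t_1),L-t_1)$ from above. Sending $\varepsilon \to 0$ gives $\mu(g(t_1),L-t_1) \leq \mu(g(t_2),L-t_2)$, which is the claim.

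The main obstacle is the second paragraph: verifying that the backward conjugate heat equation with terminal data in $\mathcal{C}_+(M)$ admits a smooth positive solution whose quadratic exponential decay of $v$ and $|\nabla v|$ is preserved on $[t_1,t_2]$, so that Lemma \ref{quadid} truly applies. The uniform bounds on $\Rm$ and its derivatives from Theorem \ref{shortbound} and the comparison/heat kernel estimates cited in the proof of Lemma \ref{quadid} (via \cite{QZ}) are exactly what make this propagation of decay — and hence the use of the monotonicity formula — legitimate in the noncompact asymptotically flat setting.
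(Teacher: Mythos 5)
Your argument is correct and is essentially the paper's own proof: both take (almost-)minimizing terminal data in $\mathcal{C}_+(M)$ at time $t_2$, run the conjugate heat equation backward, and invoke the monotonicity formula of Lemma \ref{quadid}; the paper merely phrases this as a contradiction with a minimizing sequence $\{v_k\}$ rather than with an $\varepsilon$-almost-minimizer. Your extra attention to the preservation of quadratic exponential decay and the conservation of $\int v\, dV_{g(t)}$ is a reasonable elaboration of what the paper delegates to Lemma \ref{quadid} and the estimates of \cite{QZ}.
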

\begin{proof}
We have earlier remarked that in the definition of $\mu$ as an infimum it suffices to consider functions $u\in\mc{C}_+(M)$. Therefore, if there existed $0\leq t_1<t_2\leq T$ such that $\mu(g(t_1),\tau(t_1))>\mu(g(t_2),\tau(t_2))$, then applying the monotonicity formula of Lemma \ref{quadid} by specifying terminal values $v_k(t_2)\in\mc{C}_+(M)$, where $\{v_k\}$ are such that  $\mc{W}(g(t_2),\sqrt{v_k(t_2)},\tau(t_2))\xrightarrow{k\rightarrow\infty}\mu(g(t_2),\tau(t_2))$, would give a contradiction when $k$ is sufficiently large.
\end{proof}

\subsection{Log Sobolev inequalities along the flow}\label{Sobsec}
\subsubsection{Proof of Theorem \ref{B}}

Applying the above results, we are now able to prove Theorem \ref{B}.

\begin{proof}[Proof of Theorem \ref{B}]
By Lemma \ref{logSob} we have the log Sobolev inequality \eqref{logSob1} for the metric $g_0$ at time $t=0$, for all $u\in W^{1,2}(M)$ with $\int u^2\ dV_0=1$. Using the monotonicity of $\mc{W}$ we will now translate this into a Sobolev inequality at time $t\in[0,T]$. We consider the functional $\mc{W}^*$ as in \cite{RY2},
\begin{align}
    \mc{W}^*(g,u,\tau)&=\int \tau(4|\nabla u|^2+R u^2)-u^2\log u^2\ dV_g
    \\
    &=\mc{W}(g,u,\tau)+2\log\tau+2\log 4\pi+4,\notag
\end{align}
and the associated quantity
\begin{align}
\mu^*(g,\tau)=\inf_{\substack{u\in W^{1,2}\\ \int u^2\ dV_g=1}}\mc{W^*}(g,u,\tau).
\end{align}
Notice that $\mc{W}^*$ is equal to $\mc{W}$ up to an additive constant plus a term depending on $\tau$. Then, arguing as in the proof of Lemma \ref{mono} and accounting for this extra term, we find that for any $\sigma>0$ and $t\in[0,T]$,
\begin{align}
    \mu^*(g(t),\sigma)\geq \mu^*(g(0),t+\sigma)+\frac{n}{2}\log\frac{\sigma}{t+\sigma}.\label{mumon}
\end{align}
Now, since $R\geq 0$ at time $t=0$, we can add in a scalar curvature-weighted term in \eqref{logSob1} to see that
\begin{align}
\mu^*(g(0),t+\sigma)\geq \frac{n}{2}\log(t+\sigma)-\frac{n}{2}\left(\log C_0+\log\frac{n}{8}-1\right),\label{munext}
\end{align}
while by definition,
\begin{align}
\mu^*(g(t),\sigma)\leq \int\sigma(4|\nabla u|^2+Ru^2)-u^2\log u^2\ dV_t,
\end{align}
for any $u\in W^{1,2}(M)$ satisfying $\int u^2\ dV_t=1$. The following log Sobolev inequality therefore holds at time $t$ for any $t\in[0,T]$ and $\sigma>0$:
\begin{align}
    \int u^2\ln u^2\ dV_t\leq \sigma\int|\nabla u|^2+\frac{R}{4} u^2\ dV_t-\frac{n}{2}\log\sigma+\frac{n}{2}\left(\log C_0+\log\frac{n}{8}-1\right),\label{logall}
\end{align}
for any $u\in W^{1,2}(M)$ with $\int u^2\ dV_t=1$. In particular, such an inequality holds on $W_0^{1,2}(B_N(0))\subset W^{1,2}(M)$ if $\int u^2\ dV_t=1$ for any $N>0$. The arguments of \cite[Appendix 3]{RY2}, then adapt to show that there exists an $A>0$ depending only on $C_0$ and $n$ such that for any $u\in W_0^{1,2}(B_N(0))$ we have the Sobolev inequality
\begin{align}
\left(\int |u|^{\frac{2n}{n-2}}\ dV_t\right)^{\frac{n-2}{n}}\leq A\left(\int|\nabla u|^2\ +\frac{R}{4}u^2\ dV_t\right).
\end{align}
In particular tracing the constant dependence in \cite[Theorems 5.4--5.5]{RY2} we find that $A$ depends linearly on $C_0$ for fixed $n$. Taking $N\rightarrow\infty$, we may therefore obtain \eqref{WSobolev}.
\end{proof}

\subsubsection{Proof of Theorem \ref{B1}}

We conclude by showing how the proof of Theorem \ref{B} can be modified to prove Theorem \ref{B1}.

\begin{proof}[Proof of Theorem \ref{B1}]
Proceeding just as in the proof of Theorem \ref{B}, we again arrive at the inequality \eqref{mumon} without having used the assumption $R_0\geq 0$. In the next step \eqref{munext} where $R_0\geq 0$ was used, we instead consider
\begin{align}
&4\tau\int|\nabla u|^2\ dV_0+2\tau\int Ru^2\ dV_0-\frac{n}{2}\log\tau+\frac{n}{2}\left(\log C_0+\log\frac{n}{8}-1\right)\label{noR}
\\
&\qquad \geq \int u^2\log u^2\ dV_0+2\tau\int Ru^2\ dV_0.\notag
\end{align}
Then we observe that
\begin{align}
2\tau\int Ru^2\ dV_0&\geq -2\tau\left(\int |R|^{\frac{n}{2}}\ dV_0\right)^{\frac{2}{n}}\left(\int |u|^{\frac{2n}{n-2}}\ dV_0\right)^{\frac{n-2}{n}}
\\
&\geq -2\tau C_0\left(\int |R|^{\frac{n}{2}}\ dV_0\right)^{\frac{2}{n}}\int|\nabla u|^2\ dV_0.\notag
\end{align}
Hence if $\left(\int R^{\frac{n}{2}}\ dV_0\right)^{\frac{2}{n}}<2\frac{1}{C_0}$ then we can absorb it on the left of \eqref{noR} to obtain
\begin{align}
&8\tau\int|\nabla u|^2\ dV_0+2\tau\int Ru^2\ dV_0-\frac{n}{2}\log2 \tau+\frac{n}{2}\left(\log C_0+\log\frac{n}{8}-1\right)\label{mulb}
\\
&\qquad \geq \int u^2\log u^2\ dV_0-\frac{n}{2}\log 2.\notag
\end{align}
As a result we may continue as in the proof of Theorem \ref{B} to obtain \eqref{logall} with a different constant in the last term:
\begin{align}
\int u^2\ln u^2\ dV_t\leq \sigma\int|\nabla u|^2+\frac{R}{4} u^2\ dV_t-\frac{n}{2}\log\sigma+\frac{n}{2}\left(\log C_0+\log\frac{n}{4}-1\right).
\end{align}
We conclude by proceeding through the arguments of \cite[Appendix 3]{RY2} as adapted to our case by studying the semigroup associated with the positive operator $-\Delta+R^+$ on $B_N(0)$, instead of $-\Delta+R$ in the case when $R\geq 0$, and taking $N\rightarrow\infty$. As at the conclusion of the proof of Theorem \ref{B}, we find that $A$ depends linearly on $C_0$ for fixed $n$.
\end{proof}

\section{Long-time existence and convergence of the flow, \texorpdfstring{$n\geq 4$}{n=4}}\label{Asec}

We will now prove Theorem \ref{A} in dimensions $n\geq 4$; as mentioned in the introduction, in dimension $n=3$ there are some additional technical considerations that we will address later in Section \ref{n3}. Using our main tool, Theorem \ref{B1}, which was proved in the previous section, in this section we first show monotonicity of $\int|\text{Rm}|^\frac{n}{2}\ dV_t$ and $\int|\Rm|^{\frac{n}{2}\frac{n}{n-2}}\ dV_t$ before proceeding to prove a pointwise bound on $|\text{Rm}|$, which implies the long time existence of the Ricci flow by the blowup alternative of Proposition \ref{shorttime}. Once we have the long-time existence of the flow, we will then complete the proof of Theorem \ref{A} for $n\geq 4$.

\subsection{Long-time existence}\label{Asecsec}

We begin by stating some inequalities for the evolution of $|\Rm|^2$ under the Ricci flow, which follow straightforwardly from the well-known pointwise inequality \eqref{standevo}, using the Kato inequality $|\nabla|T||\leq |\nabla T|$, for any tensor $T$. We have already mentioned a weaker form of \eqref{standevo} in the proof of Lemma \ref{quadid}.

\begin{lemma}\label{formula}
Let $(M^n,g_0)$ be a smooth Riemannian manifold, and let $g(t)$ be a solution of the Ricci flow with initial condition $g_0$. Then we have
\begin{align}
\partial_t|\Rm|^2\leq\Delta|\Rm|^2-2|\nabla\Rm|^2+16|\Rm|^3.\label{standevo}
\end{align}
Suppose also that $(M^n,g_0)$ is an asymptotically flat manifold. Then for $\alpha\geq\max(1,\frac{n}{4})$,
\begin{align}
\frac{d}{dt}\int |\Rm|^{2\alpha}\ dV_t\leq -C_1(\alpha)\int|\nabla|\Rm|^\alpha|^2\ dV_t+C_2(\alpha)\int|\Rm|^{2\alpha+1}\ dV_t,\label{standevo2}
\end{align}
for some constants $C_1(\alpha),C_2(\alpha)>0$.
\end{lemma}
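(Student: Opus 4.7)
The plan is to prove the two inequalities sequentially: first the pointwise~\eqref{standevo} by a standard computation from Hamilton's evolution equation for $\Rm$, and then the integrated~\eqref{standevo2} by differentiating $\int|\Rm|^{2\alpha}\,dV_t$, integrating by parts, and applying the Kato inequality. For~\eqref{standevo}, I would start from Hamilton's identity $\partial_t R_{ijkl}=\Delta R_{ijkl}+Q_{ijkl}(\Rm)$ with $Q$ a universal quadratic in the components of $\Rm$, contract with $\Rm$ while accounting for the time-derivative of the metric in raising indices, and apply a Bochner-style identity to obtain $\partial_t|\Rm|^2 = \Delta|\Rm|^2 - 2|\nabla\Rm|^2 + P(\Rm)$ with $P$ a cubic polynomial in the components of $\Rm$. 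An elementary algebraic bound on the structure of $P$ then gives $P \leq 16|\Rm|^3$. This step is purely pointwise and uses neither the AF assumption nor any integration.

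For~\eqref{standevo2}, I would write $f=|\Rm|^2$ and differentiate under the integral using $\partial_t dV_t = -R\,dV_t$:
$$\frac{d}{dt}\int f^\alpha\,dV_t = \alpha\int f^{\alpha-1}\partial_t f\,dV_t - \int R f^\alpha\,dV_t.$$
Substituting~\eqref{standevo} and integrating by parts on the Laplacian term turns $\alpha\int f^{\alpha-1}\Delta f\,dV_t$ into $-\frac{4(\alpha-1)}{\alpha}\int|\nabla|\Rm|^\alpha|^2\,dV_t$, after using $|\nabla f|^2 = 4 f|\nabla|\Rm||^2$ and the identity $|\nabla|\Rm|^\alpha|^2 = \alpha^2|\Rm|^{2\alpha-2}|\nabla|\Rm||^2$. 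The remaining piece $-2\alpha\int f^{\alpha-1}|\nabla\Rm|^2\,dV_t$ yields another $-\frac{2}{\alpha}\int|\nabla|\Rm|^\alpha|^2\,dV_t$ after Kato, so the combined gradient coefficient is $(4\alpha-2)/\alpha$, nonnegative precisely when $\alpha\geq 1$. The remaining cubic term $16\alpha\int f^{\alpha+1/2}\,dV_t$ combines with $-\int Rf^\alpha\,dV_t$ into $C_2(\alpha)\int|\Rm|^{2\alpha+1}\,dV_t$ using the crude pointwise estimate $|R|\leq C(n)|\Rm|$.

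The constraint $\alpha\geq n/4$ enters only to secure finiteness of all integrals in sight: AF manifolds satisfy $|\Rm|=O(r^{-2-\tau})$, a property preserved along the flow by Theorem~\ref{shortbound}, so $|\Rm|\in L^p$ for all $p>n/(2+\tau)$; the hypothesis $2\alpha\geq n/2$ comfortably covers $\int|\Rm|^{2\alpha}$, $\int|\Rm|^{2\alpha+1}$, and the gradient integral. The main obstacle is therefore purely technical: justifying the differentiation under the integral sign and the vanishing of boundary terms in the noncompact integration by parts. Both reduce to the polynomial derivative estimates in Theorem~\ref{shortbound}, which force the boundary integrals over large geodesic spheres to vanish as the radius tends to infinity. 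A minor further issue is the degeneracy of $|\Rm|^{2\alpha-2}$ and of the identity $|\nabla|\Rm|| = |\nabla|\Rm|^2|/(2|\Rm|)$ on the zero set of $\Rm$; this is handled by the standard trick of replacing $|\Rm|$ with $\sqrt{|\Rm|^2+\epsilon}$ and letting $\epsilon\to 0$ at the end of the computation.
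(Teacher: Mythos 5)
Your proposal is correct and follows essentially the same route as the paper: the pointwise inequality \eqref{standevo} is standard (the paper simply cites it), and the integrated inequality is obtained exactly as you describe, by differentiating under the integral via the decay estimates of Theorem \ref{shortbound}, integrating by parts with boundary terms over large spheres vanishing for the same reason, and applying Kato. The only nitpick is that the combined gradient coefficient $(4\alpha-2)/\alpha$ is positive already for $\alpha>\tfrac12$ rather than ``precisely when $\alpha\geq 1$'' (this is what the paper exploits in the $n=3$ case with $\alpha=\tfrac34$), but under the stated hypothesis $\alpha\geq 1$ this is immaterial.
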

\begin{proof}
As mentioned above, \eqref{standevo} may be found in many places, for instance in \cite{CLN}. Then \eqref{standevo2} is a consequence of \eqref{standevo} and the curvature decay from our assumption of asymptotic flatness; using that $\partial_t\ dV_t=-R\ dV_t$, we obtain
\begin{align}
\frac{d}{dt}\int|\Rm|^{2\alpha}\ dV_t&\leq\alpha\int|\Rm|^{2\alpha-2}\left( \Delta|\Rm|^2  -2|\nabla\Rm|^2+16|\Rm|^3\right)\label{standevo3}
\\
&\quad-\int R|\Rm|^{2\alpha}\ dV_t.\notag
\end{align}
Indeed, since by Proposition \ref{shortbound} $\Rm$ and its derivatives have controlled rates of asymptotic decay on closed intervals where the Ricci flow $g(t)$ exists, the dominated convergence theorem allows us to compute $\frac{d}{dt}\int|\text{Rm}|^{\frac{n}{2}}\ dV_t$ from the corresponding pointwise formula. For the term involving $\Delta|\Rm|^2$ we may integrate by parts on $B_R(x)$ for a fixed $x\in M$ to see that
\begin{align}
    &\int_{B_R(x)}|\Rm|^{2\alpha-2}\Delta|\Rm|^2\ dV_t
    \\
    &=-\int_{B_R(x)}\nabla|\Rm|^{2\alpha-2}\nabla|\Rm|^2\ dV_t+\int_{\partial B_R(x)}|\Rm|^{2\alpha-2}\partial_\nu|\Rm|^2\ dS_t\notag
    \\
    &=-2\int_{B_R(x)}|\Rm|^{2\alpha-2}|\nabla|\Rm||^2 \ dV_t+\int_{\partial B_R(x)}|\Rm|^{2\alpha-2}\partial_\nu|\Rm|^2\ dS_t.\notag
\end{align}
We take $R\rightarrow\infty$ to study this integral, which is the first term on the right in \eqref{standevo3}; again by Proposition \ref{shortbound} we see that the boundary integral tends to zero as $R\rightarrow\infty$. Substituting our result back into \eqref{standevo3}, after straightforward pointwise manipulations of the remaining terms we obtain \eqref{standevo2}.
\end{proof}

\subsubsection{Monotonicity of \texorpdfstring{$\|Rm\|_{L^{\frac{n}{2}}}$}{||Rm||L{n/2}} and a uniform Sobolev inequality}

Now we show that $\int|\Rm|^{\frac{n}{2}}\ dV_t$ is nonincreasing along the Ricci flow under the assumptions of Theorem \ref{A} if it is initially sufficiently small.

\begin{lemma}\label{L2}
Let $(M^n,g_0)$, $n\geq 4$, be an asymptotically flat manifold. There exists a $\delta_1(n)>0$ such that if $\left(\int|\text{Rm}|^{\frac{n}{2}}\ dV_0\right)^{\frac{2}{n}}<\delta_1(n)\frac{1}{C_{g_0}}$, then  $\int|\text{Rm}|^{\frac{n}{2}}\ dV_t$ is nonincreasing along the Ricci flow when it exists. In particular $\int |\text{Rm}|^{\frac{n}{2}}\ dV_t<\delta_1(n)^{\frac{n}{2}}$ along the flow.
\end{lemma}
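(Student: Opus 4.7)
The plan is to bootstrap from the evolution inequality of Lemma \ref{formula} using the curvature-weighted Sobolev inequality of Theorem \ref{B1}. First I would apply Lemma \ref{formula} with $\alpha = n/4$, which is allowed because the hypothesis $n\geq 4$ gives $\alpha\geq 1$; this yields
\begin{align*}
\frac{d}{dt}\int |\Rm|^{n/2}\ dV_t \leq -C_1\int |\nabla |\Rm|^{n/4}|^2\ dV_t + C_2\int |\Rm|^{n/2+1}\ dV_t.
\end{align*}
The strategy is to absorb the $L^{n/2+1}$ term into the gradient term using smallness of the initial scale-invariant $L^{n/2}$ norm of curvature relative to the Sobolev constant.

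To estimate the bad term, I would write $|\Rm|^{n/2+1} = |\Rm|\cdot(|\Rm|^{n/4})^2$ and apply H\"older with conjugate exponents $n/2$ and $n/(n-2)$, followed by Theorem \ref{B1} applied to $u = |\Rm|^{n/4}$:
\begin{align*}
\int |\Rm|^{n/2+1}\ dV_t &\leq \left(\int |\Rm|^{n/2}\ dV_t\right)^{2/n}\left(\int |\Rm|^{\frac{n^2}{2(n-2)}}\ dV_t\right)^{(n-2)/n}
\\
&\leq \left(\int |\Rm|^{n/2}\ dV_t\right)^{2/n} A\int \left(|\nabla |\Rm|^{n/4}|^2 + R^+|\Rm|^{n/2}\right)\ dV_t.
\end{align*}
The pointwise bound $R^+ \leq c_n|\Rm|$ converts $\int R^+|\Rm|^{n/2}\ dV_t$ into another copy of $\int |\Rm|^{n/2+1}\ dV_t$, which can be moved to the left-hand side once its coefficient is less than one. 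The linear dependence $A = \beta(n) C_0$ stated in Theorem \ref{B1} is critical here: under the hypothesis of the lemma, $(\int |\Rm|^{n/2}\ dV_0)^{2/n}\cdot A \leq \delta_1(n)\beta(n)$, which can be made as small as we wish by choosing $\delta_1(n)$ small enough.

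Given these estimates, I would choose $\delta_1(n)$ small enough so that (i) $\delta_1 < 2$, which guarantees Theorem \ref{B1}'s hypothesis, (ii) $\delta_1\beta(n) c_n < 1/2$, which permits the $R^+$ absorption, and (iii) the remaining coefficient on $\int |\nabla |\Rm|^{n/4}|^2$ is at most $C_1/C_2$, which makes the right-hand side of the evolution inequality $\leq 0$. This closes the calculation at $t=0$. A standard continuity bootstrap then extends the conclusion to all times: define
\begin{align*}
T^* = \sup\left\{t\in[0,T]: \left(\int |\Rm|^{n/2}\ dV_s\right)^{2/n} \leq \left(\int |\Rm|^{n/2}\ dV_0\right)^{2/n}\ \text{for all } s\leq t\right\}.
\end{align*}
Theorem \ref{shortbound} ensures time-continuity of the integral, so $T^* > 0$; on $[0,T^*]$ the same computation yields $\frac{d}{dt}\int |\Rm|^{n/2}\ dV_t \leq 0$, which forces $T^* = T$.

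The main obstacle is the delicate calibration of $\delta_1(n)$ to simultaneously allow the $R^+$-absorption and the principal absorption, together with the self-improving property needed to close the bootstrap. Without the linearity of $A$ in $C_0$ from Theorem \ref{B1} the initial smallness would not suffice, and without the pointwise control $R^+ \leq c_n |\Rm|$ the weighted term in Theorem \ref{B1} would not be comparable to the bad term in the evolution inequality; both features of the setup are essential.
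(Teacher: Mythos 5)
Your proposal is correct and follows essentially the same route as the paper's proof: the evolution inequality of Lemma \ref{formula} with $\alpha=\tfrac{n}{4}$, the H\"older splitting of $\int|\Rm|^{\frac{n}{2}+1}\,dV_t$, Theorem \ref{B1} applied to $u=|\Rm|^{\frac{n}{4}}$ with the $R^+$-term absorbed via $R^+\lesssim|\Rm|$, and the linear dependence of $A$ on $C_{g_0}$ to close the absorption, followed by the continuity argument in $t$ (which you make slightly more explicit than the paper does). The only cosmetic difference is that you apply the Sobolev inequality to bound the bad term from above while the paper bounds the negative gradient term from above by the negative $L^{\frac{n}{2}\frac{n}{n-2}}$ term; these are equivalent.
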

\begin{proof}
By Lemma \ref{formula} we have
\begin{align}
    &\frac{d}{dt}\int|\Rm|^{\frac{n}{2}}\ dV_t\notag
    \\
    &\leq -C_1\int|\nabla|\Rm|^{\frac{n}{4}}|^2\ dV_t+C_2\int |\Rm|^{\frac{n}{2}+1}\ dV_t\notag
    \\
    &\leq  -C_1\int|\nabla|\Rm|^{\frac{n}{4}}|^2\ dV_t+C_2\left(\int|\text{Rm}|^{\frac{n}{2}}\ dV_t\right)^{\frac{2}{n}}\left(\int|\text{Rm}|^{\frac{n}{4}\frac{2n}{n-2}}\ dV_t\right)^{\frac{n-2}{n}}.\label{L2cont1}
\end{align}
Now we restrict $\delta_1(n)<2$ small enough so that Theorem \ref{B1} holds. Then we find
\begin{align}
-\int|\nabla |\Rm|^{\frac{n}{4}}|^2\ dV_t&\leq-\frac{1}{A}\left(\int|\Rm|^{\frac{n}{4}\frac{2n}{n-2}}\ dV_t\right)^{\frac{n-2}{n}}+\int R^+|\Rm|^{\frac{n}{2}}\ dV_t
\\
&\leq -\frac{1}{A}\left(\int|\Rm|^{\frac{n}{4}\frac{2n}{n-2}}\ dV_t\right)^{\frac{n-2}{n}}+C_3\int |\Rm|^{\frac{n}{2}+1}\ dV_t.\notag
\end{align}
Therefore, continuing from \eqref{L2cont1}, we find that
\begin{align}
&\frac{d}{dt}\int|\Rm|^{\frac{n}{2}}\ dV_t
\\
&\leq -\frac{C_1}{A}\left(\int|\Rm|^{\frac{n}{4}\frac{2n}{n-2}}\ dV_t\right)^{\frac{n-2}{n}}\notag
\\
&\quad +C_2\left(\int|\text{Rm}|^{\frac{n}{2}}\ dV_t\right)^{\frac{2}{n}}\left(\int|\text{Rm}|^{\frac{n}{4}\frac{2n}{n-2}}\ dV_t\right)^{\frac{n-2}{n}}.\notag
\end{align}
Thus, the right-hand-side will be nonpositive if
\begin{align}
    \delta_1(n)\frac{1}{C_{g_0}}\leq\frac{C_1}{C_2}\frac{1}{A}.\label{L2neg}
\end{align}
Since $A$ depends linearly on $C_{g_0}$ we may indeed find a $\delta_1(n)>0$ small enough, depending only on $n$ so that \eqref{L2neg} holds.

This shows that $\frac{d}{dt}\int|\Rm|^{\frac{n}{2}}\ dV_t\leq 0$ whenever $\left(\int|\Rm|^{\frac{n}{2}}\ dV_t\right)^{\frac{2}{n}}<\delta_1(n)\frac{1}{C_{g_0}}$, and since at time $t=0$ we have $\left(\int|\Rm|^{\frac{n}{2}}\ dV_0\right)^{\frac{2}{n}}<\delta_1(n)\frac{1}{C_{g_0}}$ by assumption, it follows that $\int|\Rm|^{\frac{n}{2}}\ dV_t$ is nonincreasing along the Ricci flow.
\end{proof}

As a consequence we can obtain under the same assumptions a uniform, unweighted Sobolev inequality along the Ricci flow we consider in Theorem \ref{A} if we further restrict $\delta_1(n)$ to be sufficiently small.

\begin{corollary}\label{USobolev}
Let $(M^n,g_0)$, $n\geq 4$, be an asymptotically flat manifold. There exists a $\delta_1(n)>0$ such that if $\left(\int|\text{Rm}|^{\frac{n}{2}}\ dV_0\right)^{\frac{2}{n}}<\delta_1(n)\frac{1}{C_{g_0}}$ then the following uniform Sobolev inequality holds along the Ricci flow $g(t)$ whenever it exists, for $u\in W^{1,2}(M)$:
\begin{align}
    \left(\int |u|^{\frac{2n}{n-2}}\ dV_t\right)^{\frac{n-2}{n}}\leq A_0\int |\nabla u|^2\ dV_t\label{usineq}
\end{align}
Moreover, in any fixed dimension $n$, $A_0$ depends linearly on $C_{g_0}$.
\end{corollary}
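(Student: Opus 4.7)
The plan is to start from the curvature-weighted Sobolev inequality of Theorem \ref{B1}, and use the smallness of $\int|\Rm|^{n/2}\,dV_t$ established in Lemma \ref{L2} to absorb the scalar curvature weight on the right-hand side, leaving only the gradient term.

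First, I would further restrict $\delta_1(n)$ (relative to its choice in Lemma \ref{L2}) so that $\delta_1(n)<2$ and Theorem \ref{B1} applies with constant $A=A(n,C_{g_0})$, depending linearly on $C_{g_0}$. Since $R_{g_0}\in L^{n/2}$ and the hypothesis already enforces this through $\int|\Rm_{g_0}|^{n/2}\,dV_0$, Theorem \ref{B1} yields for all $t$ in the existence interval
\begin{align*}
\left(\int |u|^{\frac{2n}{n-2}}\,dV_t\right)^{\frac{n-2}{n}}\leq A\int |\nabla u|^2\,dV_t + A\int R_t^+ u^2\,dV_t.
\end{align*}

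Next, I would estimate the weight term by Hölder's inequality,
\begin{align*}
\int R_t^+ u^2\,dV_t \leq \left(\int (R_t^+)^{n/2}\,dV_t\right)^{2/n}\left(\int |u|^{\frac{2n}{n-2}}\,dV_t\right)^{\frac{n-2}{n}},
\end{align*}
and then bound $(R_t^+)^{n/2}\leq c(n)|\Rm_t|^{n/2}$ pointwise. By Lemma \ref{L2}, $\int|\Rm_t|^{n/2}\,dV_t$ is nonincreasing along the flow and so remains bounded by $(\delta_1(n)/C_{g_0})^{n/2}$. Consequently,
\begin{align*}
\left(\int (R_t^+)^{n/2}\,dV_t\right)^{2/n} \leq c(n)^{2/n}\,\delta_1(n)\,\frac{1}{C_{g_0}}.
\end{align*}

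Combining these estimates and using that $A$ is linear in $C_{g_0}$ (say $A\leq A_1(n)\,C_{g_0}$ for fixed $n$), the coefficient $A\cdot c(n)^{2/n}\delta_1(n)/C_{g_0}$ in front of the Sobolev term on the right is bounded by $A_1(n)c(n)^{2/n}\delta_1(n)$, a purely dimensional multiple of $\delta_1(n)$. I would then shrink $\delta_1(n)$ once more so that this coefficient is at most $1/2$; this final restriction depends only on $n$. Absorbing the Sobolev term to the left gives
\begin{align*}
\frac{1}{2}\left(\int |u|^{\frac{2n}{n-2}}\,dV_t\right)^{\frac{n-2}{n}}\leq A\int|\nabla u|^2\,dV_t,
\end{align*}
so the conclusion holds with $A_0=2A$, still linear in $C_{g_0}$ for fixed $n$. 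The only subtle point is bookkeeping: one has to make sure that a single $\delta_1(n)$ can be chosen small enough to satisfy the constraint of Lemma \ref{L2}, the hypothesis $\delta_1(n)<2$ of Theorem \ref{B1}, and the absorption threshold here — but since each requirement depends only on $n$ and the linear dependence $A\lesssim C_{g_0}$ ensures the $C_{g_0}$'s cancel in the absorption step, this is straightforward.
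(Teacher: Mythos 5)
Your proposal is correct and follows essentially the same route as the paper: apply Theorem \ref{B1}, use H\"older together with the pointwise bound of $R^+$ by $|\Rm|$ and the monotonicity from Lemma \ref{L2} to make the weighted term small, and exploit the linear dependence of $A$ on $C_{g_0}$ so that the $C_{g_0}$ factors cancel and the term can be absorbed after one further shrinking of $\delta_1(n)$. No issues.
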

\begin{proof}
First take $\delta_1(n)>0$ sufficiently small so that the conclusion of Lemma \ref{L2} holds. Then consider the scalar curvature-weighted term in the weighted Sobolev inequality \eqref{WSobolev} of Theorem \ref{B1}. If we further shrink $\delta_1(n)$ we can obtain from the linear dependence of $A$ on $C_{g_0}$ that
\begin{align}
A\int R^+ u^2\ dV_t&\leq A\left(\int |R|^{\frac{n}{2}}\ dV_t\right)^{\frac{2}{n}}\left(\int |u|^{\frac{2n}{n-2}}\ dV_t\right)^{\frac{n-2}{n}}
\\
&\leq\frac{1}{2}\left(\int |u|^{\frac{2n}{n-2}}\ dV_t\right)^{\frac{n-2}{n}}\notag
\end{align}
Absorbing this term back into the left hand side of \eqref{WSobolev}, we obtain the conclusion.
\end{proof}

\subsubsection{Monotonicity of \texorpdfstring{$\|Rm\|_{L^{\frac{n}{2}\frac{n}{n-2}}}$}{}}

Having now established a uniform Sobolev inequality along the Ricci flow of the metric considered in Theorem \ref{A}, we proceed to estimate higher order integral curvature norms. First we will see that the $L^{\frac{n}{2}\frac{n}{n-2}}$ norm of $\Rm$ is nondecreasing following the same argument used to prove Lemma \ref{L2}. Note as in Lemma \ref{L2} that the exponent $\frac{n}{2}\frac{n}{n-2}=\frac{n}{4}\frac{2n}{n-2}$ is exactly the one which appears in Corollary \ref{USobolev} when applied to $u=|\Rm|^{\frac{n}{4}}$. Unlike in Lemma \ref{L2} however, the next lemma will not give control of the explicit size of the $L^{\frac{n}{2}\frac{n}{n-2}}$ norm of $\Rm$, since we did not make any assumption on its size at the initial time $t=0$. 

For the purpose of proving the boundedness of $|\Rm|$ and the long-time existence result of  Proposition \ref{Linfty}, our choice of estimating this particular norm is not particularly special; we could have obtained the same kind of nondecreasing estimate for any fixed $L^p$ with $p>\frac{n}{2}$, and such a control would also suffice. Note however that in order to obtain the monotone nondecreasing of $\int|\Rm|^p\ dV_t$, the smallness of $\delta_1(n)$ in the restriction $\left(\int|\Rm|^{\frac{n}{2}}\ dV_0\right)^{\frac{2}{n}}<\delta_1(n)\frac{1}{C_{g_0}}$ depends on the choice of $p$; thus we cannot directly obtain $L^\infty$ control of $|\Rm|$ by this kind of estimate and require an extra step involving Moser iteration.

In the next section however, for the purpose of proving the long-time curvature decay and convergence of the flow in Section \ref{Afin}, we will see that our choice of estimating the particular $L^q$ norm for $q=\frac{n}{2}\frac{n}{n-2}$ of $|\Rm|$ becomes important.

\begin{lemma}\label{L3}
Let $(M^n,g_0)$, $n\geq 4$, be an asymptotically flat manifold. There exists a $\delta_1(n)>0$ such that if $\left(\int|\text{Rm}|^{\frac{n}{2}}\ dV_0\right)^{\frac{2}{n}}<\delta_1(n)\frac{1}{C_{g_0}}$ then $\int|\text{Rm}|^{\frac{n}{2}\frac{n}{n-2}}\ dV_t$ is nonincreasing along the Ricci flow whenever it exists.
\end{lemma}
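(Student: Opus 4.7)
The plan is to repeat the argument of Lemma \ref{L2} almost verbatim, this time with $2\alpha = q := \frac{n}{2}\frac{n}{n-2}$. Since $n \geq 4$, a short computation shows $\alpha = \frac{n^2}{4(n-2)} \geq \max(1, n/4)$, so Lemma \ref{formula} applies and gives
\begin{align*}
\frac{d}{dt}\int|\Rm|^q\ dV_t \leq -C_1\int |\nabla|\Rm|^{q/2}|^2\ dV_t + C_2\int|\Rm|^{q+1}\ dV_t.
\end{align*}

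To absorb the bad term on the right, I would split $|\Rm|^{q+1} = |\Rm|\cdot|\Rm|^q$ and apply H\"older with the Sobolev-conjugate exponents $n/2$ and $n/(n-2)$, followed by the uniform Sobolev inequality \eqref{usineq} of Corollary \ref{USobolev} (which is available once $\delta_1(n)$ has been shrunk enough to invoke Lemma \ref{L2}) applied to $u = |\Rm|^{q/2}$. The particular choice $q = \frac{n}{2}\frac{n}{n-2}$ is precisely the one that makes $u^{\frac{2n}{n-2}} = |\Rm|^{\frac{qn}{n-2}}$ match the exponent produced by H\"older, so no loss is incurred and one obtains
\begin{align*}
\int|\Rm|^{q+1}\ dV_t \leq A_0\left(\int|\Rm|^{n/2}\ dV_t\right)^{2/n}\int|\nabla|\Rm|^{q/2}|^2\ dV_t.
\end{align*}

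Combining this with Lemma \ref{L2}, which ensures $\left(\int|\Rm|^{n/2}\ dV_t\right)^{2/n} < \delta_1(n)/C_{g_0}$ for every time at which the flow exists, yields
\begin{align*}
\frac{d}{dt}\int|\Rm|^q\ dV_t \leq \left(-C_1 + C_2\,\frac{A_0\,\delta_1(n)}{C_{g_0}}\right)\int|\nabla|\Rm|^{q/2}|^2\ dV_t.
\end{align*}
Because $A_0$ depends linearly on $C_{g_0}$ by Corollary \ref{USobolev}, the ratio $A_0/C_{g_0}$ depends only on $n$, and by further shrinking $\delta_1(n)$ in a way that depends only on $n$ the bracket can be made nonpositive, giving the desired monotonicity.

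I do not expect any serious obstruction: the argument is essentially Lemma \ref{L2}, and the whole point is that the specific exponent $q = \frac{n}{2}\frac{n}{n-2}$ is chosen so that H\"older and Sobolev pair correctly. The one bookkeeping item, already handled by Proposition \ref{shortbound} in the proof of Lemma \ref{formula}, is that the asymptotic decay of $\Rm$ and its derivatives makes the integrals finite and justifies differentiating under the integral sign on any subinterval on which the flow exists.
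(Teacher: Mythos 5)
Your proposal is correct and follows essentially the same route as the paper: apply Lemma \ref{formula} with $2\alpha=\frac{n}{2}\frac{n}{n-2}$, split the cubic-type term by H\"older with exponents $\frac{n}{2}$ and $\frac{n}{n-2}$, absorb it via the uniform Sobolev inequality of Corollary \ref{USobolev} applied to $u=|\Rm|^{\frac{n}{4}\frac{n}{n-2}}$, and use the monotonicity of $\int|\Rm|^{\frac{n}{2}}\,dV_t$ from Lemma \ref{L2} together with the linear dependence of $A_0$ on $C_{g_0}$ to choose $\delta_1(n)$ depending only on $n$. No gaps.
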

\begin{proof}
Again we compute starting from the pointwise formulas of Lemma \ref{formula}:
\begin{align}
\frac{d}{dt}\int|\Rm|^{\frac{n}{2}\frac{n}{n-2}}\ dV_t&\leq -C_1\int|\nabla|\Rm|^{\frac{n}{4}\frac{n}{n-2}}|^2\ dV_t+C_2\int |\Rm|^{\frac{n}{2}\frac{n}{n-2}+1}\ dV_t
\\
&\leq  -C_1\int|\nabla|\Rm|^{\frac{n}{4}\frac{n}{n-2}}|^2\ dV_t\notag
\\
&\quad+C_2\left(\int|\Rm|^{\frac{n}{2}}\ dV_t\right)^{\frac{2}{n}}\left(\int|\Rm|^{\frac{n}{4}\frac{n}{n-2}\frac{2n}{n-2}}\ dV_t\right)^{\frac{n-2}{n}}.\notag
\end{align}
We may now apply Corollary \ref{USobolev} with $u=|\Rm|^{\frac{n}{4}\frac{n}{n-2}}$ and conclude by taking $\delta_1(n)>0$ sufficiently small so that $\frac{d}{dt}\int|\Rm|^{\frac{n}{2}\frac{n}{n-2}}\ dV_t\leq 0$, via an argument similar to that in the proof of Lemma \ref{L2}. Alternatively we could also use Theorem \ref{B1} to argue instead of Corollary \ref{USobolev} since we can deal with the curvature weight by the assumption that $\int|\Rm|^{\frac{n}{2}}\ dV_t$ is small.
\end{proof}

\subsubsection{Bounds on \texorpdfstring{$\|Rm\|_{L^\infty}$}{||Rm||Linfty} and long-time existence}

We now come to the $L^\infty$ estimate of $\Rm$ along the Ricci flow of Theorem \ref{A}. The proof is an adaptation of D. Yang's argument in \cite{DY} from the compact setting to our asymptotically flat setting. Although arguments following this general idea can be found in a variety of sources, we will include details below because our Sobolev inequality from Corollary \ref{USobolev} is slightly different from the one in \cite{DY} and also because in Section \ref{Afin} we will need to refer to the proof in order to show how to it can be used to prove the curvature decay $\sup_{x\in M}t|\Rm(t,x)|\xrightarrow{t\rightarrow\infty}0$.

\begin{proposition}\label{Linfty}
Let $(M^n,g_0)$, $n\geq 4$, be an asymptotically flat manifold. Let $\delta_1(n)>0$ be sufficiently small so that the conclusions of Lemma \ref{L2}, Corollary \ref{USobolev}, and Lemma \ref{L3} hold. Then whenever the Ricci flow exists we have the following estimate for $|\Rm|$:
\begin{align}
    |\Rm|^2\leq \max( C_1(g_0) t^{-\frac{2(n-2)}{n}}, C_2(g_0)t^{\frac{4(n-2)}{n^2}}),
\end{align}
for constants $C_1(g_0)$ and $C_2(g_0)$ depending only on the initial metric $g_0$. In particular, they depend only on the initial Sobolev constant $C_0$ and the initial curvature quantity $\int|\Rm|^{\frac{n}{2}\frac{n}{n-2}}\ dV_0$.
\end{proposition}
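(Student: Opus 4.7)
The plan is a parabolic Moser iteration in the spirit of D. Yang's compact-case argument \cite{DY}, adapted to the asymptotically flat setting using the three ingredients already proved: the uniform Sobolev inequality along the flow (Corollary \ref{USobolev}), the uniform smallness of $\int|\Rm|^{n/2}\,dV_t$ (Lemma \ref{L2}), and the uniform boundedness of $\int|\Rm|^{p_0}\,dV_t$ with $p_0=\tfrac{n}{2}\tfrac{n}{n-2}$ (Lemma \ref{L3}).

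First I would control the reaction term in the pointwise evolution inequality \eqref{standevo2}. Hölder with conjugate exponents $(\tfrac{n}{2},\tfrac{n}{n-2})$ followed by Corollary \ref{USobolev} applied to $|\Rm|^{\alpha}$ gives
\begin{align*}
\int|\Rm|^{2\alpha+1}\,dV_t \leq \Bigl(\int|\Rm|^{n/2}\,dV_t\Bigr)^{2/n} A_0 \int|\nabla|\Rm|^{\alpha}|^2\,dV_t,
\end{align*}
and the factor $A_0(\int|\Rm|^{n/2})^{2/n}$ is bounded by a dimensional constant since $A_0$ is linear in $C_{g_0}$ while $(\int|\Rm|^{n/2})^{2/n}<\delta_1/C_{g_0}$ by Lemma \ref{L2}. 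After further restricting $\delta_1$, I can absorb a fraction of this term into the gradient term of \eqref{standevo2}.

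Next I would iterate. Set $p_k=p_0\,q^k$ with $q>1$ the parabolic Moser exponent, and pick a shrinking sequence of times $t_k=t(1-2^{-k-1})$. Testing the evolution inequality against a time cutoff $\phi_k$ vanishing at $t_k$ and equal to one on $[t_{k+1},t]$, integrating over $[t_k,t]$, and applying the Sobolev inequality to $|\Rm|^{p_k/2}$ yields a recursion of the form
\begin{align*}
\sup_{s\in[t_{k+1},t]}\int|\Rm|^{p_{k+1}}\,dV_s \leq \frac{C\,p_k^{\gamma_1}\,2^{k\gamma_2}}{t}\Bigl(\sup_{s\in[t_k,t]}\int|\Rm|^{p_k}\,dV_s\Bigr)^{q}
\end{align*}
for dimensional exponents $\gamma_1,\gamma_2$. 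Taking logarithms, dividing by $p_{k+1}$, and telescoping, the sums $\sum 1/p_{k+1}$, $\sum \log p_k/p_{k+1}$, and $\sum k/p_{k+1}$ all converge by the geometric growth of $p_k$; the starting bound $\sup|\Rm|^{p_0}$ is supplied by Lemma \ref{L3}. Sending $k\to\infty$ yields the desired pointwise estimate $|\Rm|^2(t)\leq\max(C_1(g_0)t^{-2(n-2)/n}, C_2(g_0)t^{4(n-2)/n^2})$; the two exponents emerge respectively from the $-\log t$ contribution and from the accumulated $\log p_k$ and $k\log 2$ contributions in the telescoped sum.

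The main obstacle will be the bookkeeping of the growing constants $C_2(\alpha)\sim\alpha$ from \eqref{standevo2} across all iteration levels, so that the absorption of the reaction term remains effective uniformly in $k$. The uniformity of $\|\Rm\|_{L^{n/2}}$ in $t$ from Lemma \ref{L2} provides a $k$-independent absorption factor, while the parabolic time-cutoff argument produces the $t^{-1}$ factor per level that telescopes into the $t^{-2(n-2)/n}$ pole, with the milder polynomial growth $t^{4(n-2)/n^2}$ for large $t$ arising from the cumulative $p_k$- and $k$-dependent constants.
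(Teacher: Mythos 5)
Your overall architecture (parabolic Moser iteration with a time cutoff, geometrically growing exponents $p_k$, telescoping logarithms, starting datum supplied by Lemma \ref{L3}) matches the paper's, but there is a genuine gap exactly at the step you flag as ``the main obstacle,'' and your proposed resolution of it does not work. You control the reaction term by H\"older with the \emph{critical} pairing $(\tfrac{n}{2},\tfrac{n}{n-2})$ followed by Corollary \ref{USobolev}, and then absorb the result into the gradient term of \eqref{standevo2} using the smallness of $\|\Rm\|_{L^{n/2}}$, asserting that this gives a $k$-independent absorption factor. It does not: writing $f=|\Rm|^2$, the evolution inequality for $\int f^p\,dV_t$ has gradient coefficient comparable to $\tfrac{4(p-1)}{p}$ while the reaction term carries a factor comparable to $16p$ (equivalently, $C_2(\alpha)/C_1(\alpha)\sim\alpha$ in \eqref{standevo2}), so full absorption requires $\delta_1\lesssim 1/p_k$, which is impossible for a fixed $\delta_1(n)$ once $k$ is large. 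The uniformity in $t$ of $\|\Rm\|_{L^{n/2}}$ from Lemma \ref{L2} is irrelevant to this $p$-dependence. Since the critical H\"older pairing leaves no interpolation room to split off a zeroth-order piece, the energy inequality at level $k$ is not established, and the recursion you write down (with its $p_k^{\gamma_1}$ constant) is asserted rather than derived.

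The repair is the route the paper takes: use the supercritical bound $\|\Rm\|_{L^q}\le\beta$ with $q=\tfrac{n}{2}\tfrac{n}{n-2}>\tfrac{n}{2}$ from Lemma \ref{L3} at \emph{every} iteration level, not only for the starting quantity $\Phi_0$. Estimating $\int|\Rm|f^p\,dV_t$ by H\"older against $\|\Rm\|_{L^q}$ and then applying Young's inequality with a level-dependent parameter $\delta=\delta(p)$ splits the conjugate norm of $f^p$ between $\int f^p\,dV_t$ and $\bigl(\int f^{\frac{p}{2}\frac{2n}{n-2}}\,dV_t\bigr)^{\frac{n-2}{n}}$; choosing $\delta(p)$ so that the coefficient of the second piece is exactly what the degenerating gradient term can still absorb yields \eqref{Cstep1} with a zeroth-order constant $C_{p,q,\beta}$ growing only polynomially in $p$, which is harmless because $\sum_k \log p_k/p_k<\infty$. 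A smaller misattribution: the growth factor $t^{4(n-2)/n^2}$ in the final bound arises from the space-time integral $\Phi_0=H(p_0,0)^{1/p_0}\lesssim\beta^2\,T^{2/q}$ over $[0,T]$, not from the accumulated $\log p_k$ and $k\log 2$ contributions, which only produce constants.
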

\begin{proof}
For convenience of notation we let $f(t,x)=|\Rm|^2$. Again, the decay estimates for curvature resulting from Proposition \ref{shorttime} will allow us to integrate by parts and differentiate in the integral below using the pointwise formulas of Lemma \ref{formula}. For $p\geq\frac{n}{4}$ and any fixed $q>\frac{n}{2}$, we have
\begin{align}
\frac{1}{p}\frac{d}{dt}\int f^p\ dV_t
&\leq\int f^{p-1}\Delta f+16 |\Rm|f^p\ dV_t
\\
&\leq-\frac{4(p-1)}{p^2}\int|\nabla(f^{\frac{p}{2}})|^2\ dV_t\notag
\\
&\quad+16\left(\int |\Rm|^q\ dV_t\right)^{\frac{1}{q}}\left(\delta^{-\frac{n}{2q}}\int f^p\ dV_t\right)^{1-\frac{n}{2q}}\notag
\\
&\qquad\left(\delta^{\left(1-\frac{n}{2q}\right)\frac{n}{n-2}}\int f^{\frac{p}{2}\frac{2n}{n-2}}\ dV_t\right)^{\frac{n-2}{n}\frac{n}{2q}},\notag
\end{align}
where the last line holds for any $\delta>0$. Let $\beta>0$ be a constant such that we have the bound $16\|\Rm\|_{L^q}\leq\beta$ along our Ricci flow. We set $q=\frac{n}{2}\frac{n}{n-2}$ so that by Lemma \ref{L3} there indeed exists such a $\beta$. Continuing, we obtain,
\begin{align}
&\frac{1}{p}\frac{d}{dt}\int f^p\ dV_t
\\
&\leq-\frac{4(p-1)}{p^2}\int|\nabla(f^{\frac{p}{2}})|^2+\beta\delta^{-\frac{n}{2q}}\int f^p\ dV_t+\beta\delta^{1-\frac{n}{2q}}\left(\int f^{\frac{p}{2}\frac{2n}{n-2}}\ dV_t\right)^{\frac{n-2}{n}}.\notag
\end{align}
We apply the Sobolev inequality of Corollary \ref{USobolev} and set $\delta=\left(\frac{3p-4}{\beta A_0 p^2}\right)^{\frac{2q}{2q-n}}$ to obtain
\begin{align}
\frac{d}{dt}\int f^p\ dV_t+\int|\nabla(f^{\frac{p}{2}})|^2\ dV_t\leq C_{p,q,\beta}\int f^p\ dV_t,\label{Cstep1}
\end{align}
where $C_{p,q,\beta}=p\beta\left(\frac{\beta A_0 p^2}{3p-4}\right)^{\frac{n}{2q-n}}$. Unlike other constants such as $C_1$, $C_2$ that we have considered earlier, $C_{p,q,\beta}$ will always denote this particular value given $p,q,\beta$. Now let $T>0$ be such that the Ricci flow is defined on $[0,T]$ and  define, for $0<\tau<\tau'<T$, the function $\psi:[0,T]\rightarrow[0,1]$,
\begin{align}
\psi(t)=
\begin{cases}
0,\quad &0\leq t\leq\tau,
\\
\frac{t-\tau}{\tau'-\tau},\quad &\tau\leq t\leq\tau',
\\
1,\quad&\tau'\leq t\leq T.
\end{cases}
\end{align}
Then we multiply \eqref{Cstep1} by $\psi$ and find that
\begin{align}
\frac{d}{dt}\left(\psi\int f^p\ dV_t\right)+\psi\int|\nabla(f^{\frac{p}{2}})|^2\ dV_t\leq \left(C_{p,q,\beta}\psi+\psi'\right)\int f^p\ dV_t,
\end{align}
so that integrating, for any $\tilde{t}\in[\tau',T]$ we have
\begin{align}
\int f^p\ dV_{\tilde{t}}+\int_{\tau'}^{\tilde{t}}\int|\nabla(f^{\frac{p}{2}})|^2\ dV_t\ dt\leq \left(C_{p,q,\beta}+\frac{1}{\tau'-\tau}\right)\int_\tau^{T}\int f^p\ dV_t\ dt.\label{Cstep2}
\end{align}
We define for $p\geq \frac{n}{4}$ and $\tau\in[0,T]$,
\begin{align}
H(p,\tau)=\int_\tau^{T}\int f^p\ dV_t\ dt,
\end{align}
and let $\nu =1+\frac{2}{n}$. We now claim that for $p\geq \frac{n}{4}$ and $0\leq\tau<\tau'\leq T$, 
\begin{align}
H(\nu p,\tau')\leq A_0\left(C_{p,q,\beta}+\frac{1}{\tau'-\tau}\right)^\nu H(p,\tau)^\nu.\label{Cstep3}
\end{align}
Indeed,
\begin{align}
\int_{\tau'}^{T}\int f^{\nu p}\ dV_t\ dt&\leq\int_{\tau'}^{T}\left(\int f^p\ dV_t\right)^{\frac{2}{n}}\left(\int f^{\frac{p}{2}\frac{2n}{n-2}}\ dV_t\right)^{\frac{n-2}{n}}\ dt
\\
&\leq A_0 \left(\sup_{\tau'\leq t\leq T}\int f^p\ dV_t\right)^{\frac{2}{n}}\int_{\tau'}^{T}\int|\nabla (f^{\frac{p}{2}})|^2\ dV_t\ dt,\notag
\end{align}
so that \eqref{Cstep2} implies the claim. Now we iterate \eqref{Cstep3} to obtain $L^\infty$ control. Let $p_0=\frac{q}{2}=\frac{n}{4}\frac{n}{n-2}$, and define
\begin{align}
\eta=\nu^{\frac{2q}{2q-n}},\quad p_k=\nu^k p_0,\quad\tau_k=(1-\eta^{-k})T,\quad \Phi_k=H(p_k,\tau_k)^{\frac{1}{p_k}}.
\end{align}
We apply \eqref{Cstep3} to see that
\begin{align}
\Phi_{k+1}&=H(\nu p_k,\tau_{k+1})^{\frac{1}{\nu p_k}}
\\
&\leq A_0^{\frac{1}{\nu p_k}}\left(C_{p_k,q,\beta}+\frac{1}{\tau_{k+1}-\tau_k}\right)^{\frac{1}{p_k}}H(p_k,\tau_k)^{\frac{1}{p_k}}\notag
\\
&\leq A_0^{\frac{1}{\nu p_k}}\left(A_0^{\frac{n}{2q-n}}\left(\beta p_0\right)^{\frac{2q}{2q-n}}\eta^k+C(n,q)\frac{\eta^{k+1}}{T}\right)^{\frac{1}{p_k}}H(p_k,\tau_k)^{\frac{1}{p_k}}\notag
\\
&\leq A_0^{\frac{1}{\nu p_k}}\left(A_0^{\frac{n}{2q-n}}\left(\beta p_0\right)^{\frac{2q}{2q-n}}+C(n,q)\frac{\eta}{T}\right)^{\frac{1}{p_k}}\eta^{\frac{k}{p_k}}\Phi_k.\notag
\end{align}
Now since $\sum_{i=0}^\infty\frac{1}{p_k}=\frac{2(n^2-4)}{n^2}<\infty$ and $\sum_{i=0^\infty}\frac{k}{p_k}<\infty$, we can take $k\rightarrow\infty$ to obtain
\begin{align}
\sup_{x\in M}|\Rm(T,x)|^2&\leq\max(C_1(A_0,\beta)T^{-\frac{2(n^2-4)}{n^2}},C_2(A_0,\beta))\Phi_0
\\
&\leq \max(C_1(A_0,\beta)T^{-\frac{2(n^2-4)}{n^2}},C_2(A_0,\beta))\beta^2 T^{\frac{4(n-2)}{n^2}},\notag
\end{align}
which gives us the desired conclusion, since the choice of $T$ for which the Ricci flow exists on $[0,T]$ was arbitrary.
\end{proof}

\begin{corollary}\label{longtimebound}
Let $(M^n,g_0)$, $n\geq 4$, be an asymptotically flat manifold. Let $\delta_1(n)>0$ be sufficiently small so that the conclusions of Lemma \ref{L2}, Corollary \ref{USobolev}, and Lemma \ref{L3} hold. Then the Ricci flow $g(t)$ exists for all $t\in[0,\infty)$ and there exists a $K>0$ such that
\begin{align}
\sup_{\substack{t\in[0,\infty)\\x\in M}}|\Rm|\leq K<\infty.
\end{align}
\end{corollary}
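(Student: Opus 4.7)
The plan is to combine the smoothing estimate of Proposition \ref{Linfty} with the blowup alternative of Proposition \ref{shorttime} to obtain long-time existence, and then upgrade to uniform boundedness on $[0,\infty)$ by a time-translation of the same Moser iteration.

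For long-time existence, I would argue by contradiction. If the maximal existence time $T_M$ were finite, then Proposition \ref{Linfty} applied at any $T\in[0,T_M)$ gives
\[
\sup_{x\in M}|\Rm(T,x)|^{2}\leq \max\bigl(C_{1}(g_{0})T^{-\tfrac{2(n-2)}{n}},\;C_{2}(g_{0})T^{\tfrac{4(n-2)}{n^{2}}}\bigr),
\]
which remains finite as $T\nearrow T_{M}$. This contradicts the blowup condition \eqref{blowup}, forcing $T_{M}=\infty$.

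For the uniform bound, on $[0,1]$ Proposition \ref{shorttime} already supplies a finite bound $K_{0}$. For $t_{0}\geq 1$, I would translate time and regard $\tilde g(s):=g(t_{0}-1+s)$ for $s\in[0,1]$ as a Ricci flow on the unit interval starting from the asymptotically flat metric $g(t_{0}-1)$. The two ingredients driving the Moser iteration in the proof of Proposition \ref{Linfty} are the uniform Sobolev inequality of Corollary \ref{USobolev} (with constant $A_{0}$ depending only on $n$ and $C_{g_{0}}$) and the monotone bound $\|\Rm(t)\|_{L^{q}}\leq\|\Rm_{0}\|_{L^{q}}$ from Lemma \ref{L3} with $q=\tfrac{n}{2}\tfrac{n}{n-2}$. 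Both are valid uniformly along the original flow, hence they continue to hold for $\tilde g$ on $[0,1]$ with the same constants. Re-running the iteration on $\tilde g$ therefore yields
\[
\sup_{x\in M}|\Rm(t_{0},x)|^{2}\leq \max\bigl(C_{1}(g_{0}),\;C_{2}(g_{0})\bigr),
\]
uniformly in $t_{0}\geq 1$, with exactly the constants of Proposition \ref{Linfty}. Taking $K^{2}:=\max(K_{0}^{2},C_{1}(g_{0}),C_{2}(g_{0}))$ then gives the desired uniform bound.

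I expect the main subtlety to be verifying that the constants produced by the iteration in Proposition \ref{Linfty} depend only on the uniform-in-$t$ quantities $n$, $A_{0}$ and $\sup_{t}\|\Rm(t)\|_{L^{q}}$, and not on intrinsic features of the ``initial'' metric of the translated flow such as its own Sobolev constant $C_{g(t_{0}-1)}$, which we do not control a priori. Inspecting the proof of Proposition \ref{Linfty}, however, the iteration constants $C_{p,q,\beta}$ are manifestly functions of $n$, $A_{0}$ and $\beta=16\sup_{t}\|\Rm(t)\|_{L^{q}}$ alone, so the translation argument does not perturb them; it is this observation that lets us turn the $T$-dependent smoothing estimate into a $t$-uniform bound.
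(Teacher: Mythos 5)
Your proposal is correct and follows essentially the same route as the paper: long-time existence from the smoothing estimate of Proposition \ref{Linfty} combined with the blowup alternative of Theorem \ref{shorttime}, and the uniform bound by re-running the Moser iteration from a translated initial time, using that the Sobolev constant of Corollary \ref{USobolev} and the $L^{\frac{n}{2}\frac{n}{n-2}}$ bound of Lemma \ref{L3} are uniform along the flow. The paper iterates over integer time intervals rather than translating continuously, but this is only a cosmetic difference, and your closing remark about the iteration constants depending only on $n$, $A_0$, and $\beta$ is precisely the observation the paper relies on.
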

\begin{proof}
The bounds we have obtained in Proposition \ref{Linfty} immediately imply that the flow exists for all times $t\in[0,\infty)$, by the blowup alternative in the short-time existence statement of Proposition \ref{shorttime}. 

For the uniform bound on $|\Rm|$ observe first that by the estimates in Proposition \ref{Linfty} $|\Rm|$ is bounded for $t\in[1,2]$. Moreover the the estimates on $|\Rm|$ in the short-time existence results of Proposition \ref{shorttime} tells us that $|\Rm|$ is bounded on $[0,1]$. Next, by Proposition \ref{USobolev} we have a uniform upper bound on the Sobolev constant $C_{g(t)}$ along the flow, and by Proposition \ref{L3}, $\int|\Rm|^{\frac{n}{2}\frac{n}{n-2}}\ dV_t$ is nonincreasing along the flow. Therefore we can apply the estimates of Proposition \ref{Linfty} with an initial time $t=1$ instead of $t=0$ to see that
\begin{align}
\sup_{\substack{t\in[2,3]\\x\in M}}|\Rm|\leq \sup_{\substack{t\in[1,2]\\x\in M}}|\Rm|.
\end{align}
Repeating this by applying Proposition \ref{Linfty} for initial times $t\in\mb{N}$, we conclude that
\begin{align}
\sup_{\substack{t\in[0,\infty)\\x\in M}}|\Rm|\leq \sup_{\substack{t\in[0,2]\\x\in M}}|\Rm|<\infty.
\end{align}
\end{proof}
Next we will look more carefully at $\int|\Rm|^{\frac{n}{2}\frac{n}{n-2}}\ dV_t$ and take advantage of the improved decay that we obtain to pass to decay of $|\Rm|$ in time.

\subsection{Curvature decay and convergence --- proof of Theorem \ref{A}}\label{Afin}

We will now complete the proof of Theorem \ref{A} when $n\geq 4$ by showing that the long-time existence of the flow considered in Theorem \ref{A} along with our estimates in Section \ref{Asecsec} imply its convergence to $\mb{R}^n$ with the standard flat metric. Below we first prove the curvature decay estimate $\|Rm_{g(t)}\|_{L^\infty}=o(t^{-1})$, and then discuss why this decay estimate is sufficient to conclude convergence.

\subsubsection{Decay of \texorpdfstring{$\|Rm\|_{L^{\frac{n}{2}\frac{n}{n-2}}}$}{||Rm||L(n/2)(n/(n-2))}}

As a preliminary step, we will first obtain decay of the curvature in an integral sense along the flow, which improves the monotonic nonincreasing property established in Lemma \ref{L3}.

\begin{lemma}\label{L4decay}
Let $(M^n,g_0)$, $n\geq 4$, be an asymptotically flat manifold. Let $\delta_1(n)>0$ be sufficiently small so that the conclusions of Lemma \ref{L2}, Corollary \ref{USobolev}, and Lemma \ref{L3} hold. Then for every $\eta>0$, there exists $T_\eta>0$ such that along the Ricci flow,
\begin{align}
\left(\int|\Rm|^{\frac{n}{2}\frac{n}{n-2}}\ dV_t\right)^{\frac{n-2}{n}}\leq\eta t^{-1},\quad\text{for all }t\geq T_\eta.\label{L4dest}
\end{align}
\end{lemma}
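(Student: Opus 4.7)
\medskip

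\noindent\emph{Proof sketch.} The plan is to extract from the proof of Lemma \ref{L2} a differential inequality of the form $\frac{d}{dt}\int|\Rm|^{\frac{n}{2}}\,dV_t \leq -c_0\,Y(t)^{\frac{n-2}{n}}$, where $Y(t):=\int|\Rm|^{\frac{n}{2}\frac{n}{n-2}}\,dV_t$, and then combine integrability in $t$ of $Y^{(n-2)/n}$ with the monotonicity of $Y$ from Lemma \ref{L3} to upgrade the $O(t^{-1})$ bound to $o(t^{-1})$.

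First I would return to the computation in Lemma \ref{L2}. Observe that the exponent $\frac{n}{4}\cdot\frac{2n}{n-2}$ appearing in the Sobolev term there equals $q:=\frac{n}{2}\frac{n}{n-2}$. Thus the inequality produced in that proof reads
\begin{align*}
\frac{d}{dt}\int|\Rm|^{\frac{n}{2}}\,dV_t \leq -\left(\frac{C_1}{A} - C_2\Bigl(\int|\Rm|^{\frac{n}{2}}\,dV_t\Bigr)^{\!\frac{2}{n}}\right)\!Y(t)^{\frac{n-2}{n}}.
\end{align*}
Since $\delta_1(n)$ has been chosen so that $A$ depends linearly on $C_{g_0}$ and the quantity $\bigl(\int|\Rm|^{n/2}\,dV_t\bigr)^{2/n}$ stays below $\delta_1(n)/C_{g_0}$ along the flow, I can shrink $\delta_1(n)$ further (without affecting the earlier conclusions) to guarantee that the coefficient in parentheses is bounded below by some $c_0>0$ depending only on $g_0$. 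The decay estimates from Proposition \ref{shortbound}, together with the long-time existence given by Corollary \ref{longtimebound}, justify differentiating under the integral sign on any $[0,T]$.

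Integrating the resulting inequality $\frac{d}{dt}\int|\Rm|^{n/2}\,dV_t \leq -c_0\,Y(t)^{(n-2)/n}$ from $0$ to $\infty$ and using nonnegativity of the left-hand side yields
\begin{align*}
\int_0^\infty Y(t)^{\frac{n-2}{n}}\,dt \leq \frac{1}{c_0}\int|\Rm|^{\frac{n}{2}}\,dV_0 < \infty.
\end{align*}
By Lemma \ref{L3}, $Y(t)$ is nonincreasing, hence so is $Y(t)^{(n-2)/n}$. A standard observation for nonnegative nonincreasing integrable functions then gives
\begin{align*}
\frac{t}{2}\,Y(t)^{\frac{n-2}{n}} \leq \int_{t/2}^{t}Y(s)^{\frac{n-2}{n}}\,ds,
\end{align*}
and the right-hand side tends to $0$ as $t\to\infty$ because it is the tail of a convergent integral. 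Therefore $t\,Y(t)^{(n-2)/n}\to 0$, which is exactly the claim \eqref{L4dest} with $T_\eta$ chosen so that $t\,Y(t)^{(n-2)/n}<\eta$ for $t\ge T_\eta$.

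The only real obstacle is the first step: recognizing that the Sobolev exponent $\frac{n}{4}\cdot\frac{2n}{n-2}$ that appears naturally in the proof of Lemma \ref{L2} coincides with $q=\frac{n}{2}\frac{n}{n-2}$, and carefully checking that the constant $c_0$ is strictly positive under the smallness assumption (which requires tracking the linear $C_{g_0}$-dependence of $A$). Everything after that is the standard ``monotone integrable implies $o(1/t)$'' argument.
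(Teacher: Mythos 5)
Your proposal is correct and follows essentially the same route as the paper: extract from the proof of Lemma \ref{L2} the differential inequality $\frac{d}{dt}\int|\Rm|^{\frac{n}{2}}\,dV_t\leq -c_0\bigl(\int|\Rm|^{\frac{n}{2}\frac{n}{n-2}}\,dV_t\bigr)^{\frac{n-2}{n}}$, integrate over $[0,\infty)$ using long-time existence to get integrability in time, and combine with the monotonicity from Lemma \ref{L3} to conclude the $o(t^{-1})$ decay. The only difference is that you spell out the elementary ``nonincreasing integrable implies $o(1/t)$'' step, which the paper leaves implicit.
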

\begin{proof}
We showed in the proof of Lemma \ref{L2} that there exists a $C=C(g_0)>0$ such that at all times where our Ricci flow is defined,
\begin{align}
\frac{d}{dt}\int|\Rm|^{\frac{n}{2}}\ dV_t+C\left(\int|\Rm|^{\frac{n}{2}\frac{n}{n-2}}\ dV_t\right)^{\frac{n-2}{n}}\leq 0.\label{L2evolve}
\end{align}
Furthermore we know by Corollary \ref{longtimebound} that the flow exists for all times $t\in[0,\infty)$. Since $\int|\Rm|^{\frac{n}{2}}\ dV_t$ is nonnegative, we can integrate \eqref{L2evolve} to see that
\begin{align}
    \int_0^\infty \left(\int|\Rm|^{\frac{n}{2}\frac{n}{n-2}}\ dV_t\right)^{\frac{n-2}{n}}\ dt<\infty.
\end{align}
This combined with the fact that $\left(\int|\Rm|^{\frac{n}{2}\frac{n}{n-2}}\ dV_t\right)^{\frac{n-2}{n}}$ is monotonic nonincreasing along the flow by Lemma \ref{L3} implies that
\begin{align}
\lim_{t\rightarrow\infty}t\left(\int|\Rm|^{\frac{n}{2}\frac{n}{n-2}}\ dV_t\right)^{\frac{n-2}{n}}=0,
\end{align}
which gives \eqref{L4dest}.
\end{proof}

\subsubsection{Decay of \texorpdfstring{$\|Rm\|_{L^\infty}$}{||Rm||Linfty} from decay of \texorpdfstring{$\|Rm\|_{L^{\frac{n}{2}\frac{n}{n-2}}}$}{||Rm||L(n/2)(n/(n-2))}}

Using the integral curvature decay estimate just obtained we can now revisit the iteration argument of Proposition \ref{Linfty} to prove our $L^\infty$ decay estimate for $Rm$.

\begin{proposition}\label{supdec}
Let $(M^n,g_0)$, $n\geq 4$, be an asymptotically flat manifold. Let $\delta_1(n)>0$ be sufficiently small so that the conclusions of Lemma \ref{L2}, Corollary \ref{USobolev}, and Lemma \ref{L3} hold. Then the singularity at infinity of the Ricci flow $g(t)$ starting from $g_0$ which exists for all times $t\in[0,\infty)$ by Corollary \ref{longtimebound} satisfies
\begin{align}
\lim_{t\rightarrow\infty}t\sup_{x\in M}|\Rm(t,x)|=0.\label{T3}
\end{align}
\end{proposition}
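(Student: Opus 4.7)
The plan is to repeat the Moser iteration argument of Proposition \ref{Linfty}, but restart it at a late time $T_0$ chosen large enough that the integral decay from Lemma \ref{L4decay} is in effect, and then carefully track how the decay of $\|\Rm\|_{L^q}$ (with $q=\tfrac{n}{2}\tfrac{n}{n-2}$) sharpens both the uniform $L^q$ bound $\beta$ used at each iteration step and the base quantity $\Phi_0$ used to launch the iteration. Fix $\eta>0$ and let $T_\eta$ be given by Lemma \ref{L4decay}. For any $T_0 \geq T_\eta$, the lemma together with monotonicity of $\|\Rm\|_{L^q}$ (Lemma \ref{L3}) yields
\begin{align}
\sup_{s\in[T_0,2T_0]}\|\Rm(s)\|_{L^q} \leq \|\Rm(T_0)\|_{L^q} \leq (\eta/T_0)^{2/n},
\end{align}
since $\tfrac{n}{q(n-2)}=\tfrac{2}{n}$. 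This will serve as the uniform bound $\beta \lesssim (\eta/T_0)^{2/n}$ needed to apply the Moser iteration on the interval $[T_0,2T_0]$.

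Next I would rerun the iteration from the proof of Proposition \ref{Linfty} verbatim, but with $\tau_k = T_0+(1-\rho^{-k})T_0$ in place of $\tau_k=(1-\rho^{-k})T$, so the effective time length entering the recursion is $\ell=T_0$. The recursion formula derived there still gives
\begin{align}
\Phi_\infty \leq C(n,A_0)\,\bigl(A_0^{(n-2)/2}(\beta p_0)^{n/2} + C(n,q)\rho/T_0\bigr)^{S_2}\,\Phi_0,
\end{align}
with $S_2=\sum_k 1/p_k = 2(n^2-4)/n^2$. Both terms inside are of size $1/T_0$: the first since $\beta^{n/2}\lesssim \eta/T_0$, the second manifestly. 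For the base quantity, using the decay $\|\Rm\|_{L^q(t)}^q \leq (\eta/t)^{n/(n-2)}$ and integrating in time over $[T_0,2T_0]$,
\begin{align}
\Phi_0 = \Bigl(\int_{T_0}^{2T_0}\|\Rm\|_{L^q}^q\,dt\Bigr)^{1/p_0} \leq C(n)\,\eta^{4/n}\,T_0^{-8/n^2},
\end{align}
where I use $2/q=4(n-2)/n^2$ and $1/p_0=4(n-2)/n^2$. This is the sharpening of the crude bound $\Phi_0\leq \beta^2 T^{4(n-2)/n^2}$ used in Proposition \ref{Linfty}.

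Combining these, the key exponent identity $S_2 + 8/n^2 = 2(n^2-4)/n^2 + 8/n^2 = 2$ is what makes everything work out:
\begin{align}
\sup_{x\in M}|\Rm(2T_0,x)|^2 \leq C(n,A_0)\,\eta^{4/n}\,T_0^{-2}.
\end{align}
Setting $t=2T_0$ and taking $T_0 \geq T_\eta$ arbitrary, this reads $t^2\sup_x|\Rm(t,x)|^2 \leq 4 C(n,A_0)\,\eta^{4/n}$ for all $t \geq 2T_\eta$. Since $\eta>0$ can be made as small as we like, this proves $\lim_{t\to\infty} t\sup_x|\Rm(t,x)| = 0$.

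The main obstacle is the bookkeeping of exponents rather than any conceptual novelty. The choice $q=\tfrac{n}{2}\tfrac{n}{n-2}$ from Lemma \ref{L3} is precisely what makes the two powers of $T_0$ (one from the recursion constant $S_2$ and one from the $L^q$ decay feeding $\Phi_0$) add up to exactly $-2$, rather than some inferior exponent; any larger $p>n/2$ would not line up. I would also note that the argument needs $T_0$ large enough that $T_\eta \leq T_0$ is in the regime where the "$A$" branch (rather than "$B/\ell$" branch) of $(A+B/\ell)^{S_2}$ matches the order asserted, which is automatic since both terms are $\sim 1/T_0$.
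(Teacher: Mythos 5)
Your proposal is correct and follows essentially the same route as the paper's proof: restart the Moser iteration of Proposition \ref{Linfty} on the half-interval $[T/2,T]$ (your $[T_0,2T_0]$), feed the improved bound $\|\Rm\|_{L^q}\leq(\eta/t)^{2/n}$ from Lemma \ref{L4decay} into both the iteration constant $\beta$ and the base quantity $\Phi_0$, and use the exponent identity $\tfrac{2(n^2-4)}{n^2}+\tfrac{8}{n^2}=2$ to land on $\sup_x|\Rm(T,x)|^2\leq C\eta^{4/n}T^{-2}$. The bookkeeping of exponents matches the paper's computation exactly.
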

\begin{proof}
We will follow a line of argument similar to that used in the proof of Proposition \ref{Linfty}. Recall that in that proof we used a constant $\beta>0$ such that $\|\Rm\|_{L^q}\leq\beta$, where $q=\frac{n}{2}\frac{n}{n-2}$.  But now by Lemma \ref{L4decay} we have a better estimate --- given $\eta>0$,
\begin{align}
\|\Rm(t)\|_{L^q}=\left(\int|\Rm|^{\frac{n}{2}\frac{n}{n-2}}\ dV_t\right)^{\frac{2}{n}\frac{n-2}{n}}\leq\left(\eta t^{-1}\right)^{\frac{2}{n}},\quad\text{for all } t\geq T_\eta.\label{betterest}
\end{align}

Recall that in the proof of Proposition \ref{Linfty} we produced an $L^\infty$ estimate of $\Rm$ at time $t=T$ using an $L^q$ bound of $\Rm$ at time $t=0$. Here, taking advantage of the uniform Sobolev inequality \eqref{usineq} along the flow combined with \eqref{betterest}, we will instead produce an $L^\infty$ estimate of $\Rm$ at time $t=T$ using $L^q$ bounds of $\Rm$ for $t\in[\frac{T}{2},T]$, given any $T\geq 2T_\eta$. Define now $\beta(t)=16\left(\eta t^{-1}\right)^{\frac{2}{n}}$. Then we have the following analogue of \eqref{Cstep3}:
\begin{align}
H(\nu p,\tau')\leq A_0\left(C_{p,q,\beta(\tau)}+\frac{1}{\tau'-\tau}\right)^\nu H(p,\tau)^\nu,\label{Cstepnew}
\end{align}
which holds for any $\frac{T}{2}\leq\tau<\tau'\leq T$. Again we intend to iterate to obtain $L^\infty$ control. Define as before $p_0=\frac{q}{2}=\frac{n}{4}\frac{n}{n-2}$ with
\begin{align}
\eta=\nu^{\frac{2q}{2q-n}},\quad p_k=\nu^k p_0,\quad\Phi_k=H(p_k,\tau_k)^{\frac{1}{p_k}},
\end{align}
but this time with $\tau_k=\frac{T}{2}+(1-\eta^{-k})\frac{T}{2}$. We then see that (restricting to $\eta<1$)
\begin{align}
\Phi_{k+1}&\leq A_0^{\frac{1}{p_k}}\left(A_0^{\frac{n}{2q-n}}\left(\beta(\tau_k) p_0\right)^{\frac{2q}{2q-n}}+C(n,q)\frac{\eta}{T}\right)^{\frac{1}{p_k}}\eta^{\frac{k}{p_k}}\Phi_k
\\
&\leq A_0^{\frac{1}{p_k}}\left(A_0^{\frac{n}{2q-n}}p_0^{\frac{2q}{2q-n}}\frac{C_1}{T}+C(n,q)\frac{\eta}{T}\right)^{\frac{1}{p_k}}\eta^{\frac{k}{p_k}}\Phi_k.\notag
\end{align}
Iterating as in the proof of Proposition \ref{Linfty} we therefore see that for some constant $C>0$ depending only on $(M^n,g_0)$,
\begin{align}
\sup_{x\in M}|\Rm(T,x)|^2\leq CT^{-\frac{2(n^2-4)}{n^2}}\Phi_0.\label{pen1}
\end{align}
Finally, we have that
\begin{align}
\Phi_0&=\left(\int_{\frac{T}{2}}^T\int|\Rm|^{\frac{n}{2}\frac{n}{n-2}}\ dV_t\ dt\right)^{\frac{4}{n}\frac{n-2}{n}}
\\
&\leq \left(\int_{\frac{T}{2}}^T (\eta t^{-1})^{\frac{n}{n-2}}\ dt\right)^{\frac{4}{n}\frac{n-2}{n}}\notag
\\
&\leq C(n) \eta^{\frac{4}{n}}T^{-\frac{8}{n^2}}.\notag
\end{align}
Combining this with \eqref{pen1}, we conclude \eqref{T3}, since we find that for some constant $C$ depending only on $(M^n,g_0)$ and any $\eta>0$,
\begin{align}
\sup_{x\in M}|\Rm(T,x)|^2\leq C \eta^{\frac{4}{n}}T^{-2},\quad\text{for all }T\geq 2T_\eta.
\end{align}
\end{proof}

\subsubsection{Sufficient decay of \texorpdfstring{$\|Rm\|_{L^\infty}$}{||Rm||Linfty} implies convergence to flat space}

Now that we have proved the $L^\infty$ curvature decay estimate of Proposition \ref{supdec}, a straightforward adaptation of the results of \cite[Sections 4--5]{YLi} implies the weighted convergence of $g(t)$ to an AF metric on $M$ with $|\Rm|\equiv 0$, so that $M$ is diffeomorphic to $\mb{R}^n$. First we now quote the particular statement due to \cite{YLi} which we will adapt:

\begin{proposition}[{\cite[Sections 4-5]{YLi}}]\label{Lisubt2}
    Let $(M^n,g_0)$ be an asymptotically flat manifold of order $\tau>0$. Suppose that the Ricci flow $g(t)$ with initial condition $g(0)=g_0$ satisfying $R_{g_0}\geq 0$ exists for all times $0\leq t<\infty$ and moreover has curvature decay satisfying
    \begin{align}
        \lim_{t\rightarrow\infty}t\sup_{x\in M}|\Rm(t,x)|=0.
    \end{align}
    Then there exists an asymptotically flat metric $g(\infty)$ with the same asymptotically flat coordinate system as the metrics $g(t)$ such that the $g(t)$ converge to $g(\infty)$ as $t\rightarrow\infty$ in $C_{-\tau'}^\infty(M)$, for any $\tau'\in(0,\min(\tau,n-2))$.
\end{proposition}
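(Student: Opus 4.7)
The plan is to follow Li's argument from \cite{YLi}, which upgrades the pointwise decay $t\sup_x|\Rm(t,x)|\to 0$ to $C^\infty_{-\tau'}$ convergence of the metric by combining Shi-type derivative estimates, preservation of the asymptotically flat structure, and weighted parabolic regularity on $\mb{R}^n$.

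First, I would improve the temporal decay to all covariant derivatives of curvature. Since $|\Rm|$ is globally bounded along the flow (it is $o(1/t)$ at infinity and bounded on $[0,1]$ by Theorem \ref{shortbound}) and the flow exists on $[0,\infty)$, Shi-type local derivative estimates combined with a parabolic rescaling argument yield $\sup_x|\nabla^k\Rm(t,x)| = o(t^{-1-k/2})$ as $t\to\infty$ for each $k$. In parallel, the AF structure is preserved by Theorem \ref{shorttime}, and the short-time spatial decay of Theorem \ref{shortbound} propagates to give simultaneous weighted bounds $|\nabla^k\Rm(t,x)| \leq C_k r(x)^{-2-\tau-k}$ which, after combining with the $L^\infty$ temporal decay, can be made uniform in $t$. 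Second, I would exploit $R_{g_0}\geq 0$: the evolution $\partial_t R = \Delta R + 2|\text{Ric}|^2$ and the maximum principle (legitimate on AF manifolds due to the decay of $R$) preserve nonnegativity of $R$ along the flow, while integration over $M$ (justified by AF decay) produces $\int_0^\infty\int|\text{Ric}|^2\,dV_t\,dt < \infty$, giving integrated $L^2$ spacetime control of the Ricci tensor.

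Third, I would reformulate the Ricci flow as a Ricci--DeTurck flow using the Euclidean background metric on $\mb{R}^n\setminus B_R(0)$ transported through the AF coordinate diffeomorphism $\Phi$. This turns the Ricci flow into a strongly parabolic system for the components $g_{ij}-\delta_{ij}$, to which weighted parabolic Schauder theory applies to produce uniform-in-$t$ H\"older bounds in $C^\infty_{-\tau'}(M)$. Combined with the temporal decay of Step 1, these bounds show that $g(t)$ is Cauchy in $C^\infty_{-\tau'}$ as $t\to\infty$; the limit metric $g(\infty)$ is then AF of the same order with $|\Rm|\equiv 0$, hence globally flat, so that $M\cong\mb{R}^n$.

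The main obstacle is bridging the gap between the pointwise hypothesis $|\Rm|=o(1/t)$, which is \emph{not} temporally integrable, and the weighted convergence of $g(t)$ via $\partial_t g = -2\,\text{Ric}$. The resolution uses the combined bound $|\text{Ric}(t,x)|\leq\min(\epsilon/t,\ Cr(x)^{-2-\tau})$: integrated against a weight $r^{-\tau'}$ with $\tau'<\tau$, the minimum becomes integrable in $t$, so that the differences $g(t_2)-g(t_1)$ vanish in the weighted norm as $t_1\to\infty$. The upper restriction $\tau'<n-2$ reflects the decay threshold of harmonic functions on $\mb{R}^n$, which enters through the solvability of the elliptic gauge-fixing equation in weighted H\"older spaces.
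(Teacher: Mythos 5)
Your outline reproduces the right architecture (Shi derivative estimates, preservation of the AF structure, Ricci--DeTurck/weighted parabolic regularity), and note that the paper does not prove this proposition itself but quotes it from \cite{YLi}. However, the two steps you treat as routine are exactly the substance of Li's argument, and your proposed substitutes for them do not work. First, the uniform-in-$t$ weighted bounds $|\nabla^k\Rm(t,x)|\leq C_k r^{-2-\tau-k}$ do not simply ``propagate'' from Theorem \ref{shortbound}, whose constants depend on $T$ and a priori degenerate as $T\to\infty$; obtaining $T$-independent constants is the heart of Li's Sections 4--5. Li does this by comparing $|\Rm|$ with a solution $u$ of the heat equation with initial data $\sim r^{-2-\tau}$ and proving that $\int u^p\,dV_t$ is nonincreasing for $p\in(\tfrac{n}{2+\min(\tau,n-2)},\tfrac{n}{2})$ --- and \emph{this} is the one place where $R_{g_0}\geq 0$ enters, through $\partial_t\,dV_t=-R\,dV_t$ (see the discussion in the proof of Proposition \ref{Lidapt}, which replaces exactly this step). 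Your use of $R\geq 0$ to extract $\int_0^\infty\int|\mathrm{Ric}|^2\,dV_t\,dt<\infty$ is both disconnected from this issue and unjustified as stated, since $\int R\,dV_t$ need not be finite when $\tau\leq n-2$.

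Second, and more seriously, your resolution of the ``main obstacle'' fails. The bound $|\mathrm{Ric}(t,x)|\leq\min(\epsilon(t)/t,\ C\,r(x)^{-2-\tau})$ is not integrable in $t$: for each fixed $x$, once $t\gtrsim r(x)^{2+\tau}$ the minimum equals $\epsilon(t)/t$, and $\int^\infty \epsilon(t)\,t^{-1}\,dt$ can diverge (take $\epsilon(t)=1/\log t$). On a fixed compact set the spatial weight buys nothing, so $\sup_x r^{\tau'}\int_{t_1}^{t_2}|\mathrm{Ric}|\,dt$ need not tend to $0$, and the Cauchy property of $g(t)$ in $C^0_{-\tau'}$ does not follow. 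The decay that actually closes the argument is of heat-kernel (product) type, morally $|\Rm(t,x)|\lesssim(\sqrt{t}+r(x))^{-2-\tau}$, which is what the barrier $u$ delivers; its temporal tail is $O(t^{-(2+\tau)/2})$ with $(2+\tau)/2>1$, hence integrable, and one gets $r^{\tau'}\int_{t_1}^{\infty}(\sqrt{t}+r)^{-2-\tau}\,dt\lesssim t_1^{(\tau'-\tau)/2}\to 0$ for $\tau'<\tau$. Without establishing a bound of this strength, the hypothesis $t\sup_x|\Rm|\to0$ alone cannot yield convergence of the metric.
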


Note however that unlike in Proposition \ref{Lisubt2} along with in the rest of Li's paper, we have not made any assumptions on the sign of the scalar curvature of the initial metric. Hence that result cannot be directly applied to our setting. Instead, below we will describe how the arguments used to prove Proposition \ref{Lisubt2} in \cite{YLi} can be modified so that the assumption $R_{g_0}\geq 0$ can be replaced by the assumptions of Theorem \ref{A}. As a result this concludes the proof of Theorem \ref{A} in dimensions $n\geq 4$.

\begin{proposition}\label{Lidapt}
    Let $(M^n,g_0)$ be an asymptotically flat manifold of order $\tau>0$. Suppose that the Ricci flow $g(t)$ with initial condition $g(0)=g_0$ exists for all times $0\leq t<\infty$ and moreover has curvature decay satisfying
    \begin{align}
        \lim_{t\rightarrow\infty}t\sup_{x\in M}|\Rm(t,x)|=0.
    \end{align}
    There exists $\delta_1(n)>0$ such that if $\left(\int|\Rm|^{\frac{n}{2}}\ dV_0\right)^{\frac{2}{n}}<\delta_1(n)\frac{1}{C_{g_0}}$, then there exists a flat, asymptotically flat metric $g(\infty)$ with the same asymptotically flat coordinate system as the metrics $g(t)$ such that the $g(t)$ converge to $g(\infty)$ as $t\rightarrow\infty$ in $C_{-\tau'}^\infty(M)$, for any $\tau'\in(0,\min(\tau,n-2))$.
\end{proposition}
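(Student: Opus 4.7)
The plan is to follow the arguments of \cite[Sections 4--5]{YLi} that establish Proposition \ref{Lisubt2}, and to show that each use of $R_{g_0}\geq 0$ in those arguments can be replaced by a consequence of the pinching hypothesis $\left(\int|\Rm|^{\frac{n}{2}}\ dV_0\right)^{\frac{2}{n}}<\delta_1(n)\frac{1}{C_{g_0}}$. A first reading of Li's argument suggests that $R\geq 0$ enters essentially in two ways: (i) to produce a uniform Sobolev inequality along the Ricci flow, by exploiting that $R_{g(t)}\geq 0$ is preserved; and (ii) to control the volume form through $\partial_t dV_t = -R\, dV_t\leq 0$, which gives $dV_t$ monotone nonincreasing and furnishes the volume comparisons needed in the interpolation and decay estimates. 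Our task is therefore to supply substitutes for (i) and (ii) without any sign assumption on $R_{g_0}$.

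For (i) we have already done the work: Corollary \ref{USobolev} provides a uniform Sobolev inequality along the flow $g(t)$ with constant $A_0$ depending only on $C_{g_0}$ and $n$, precisely under the pinching hypothesis of the proposition. For (ii), rather than monotonicity of $dV_t$, I would use the $L^{n/2}$ control of curvature from Lemma \ref{L2}, which bounds $\|R_t\|_{L^{n/2}}$ uniformly in $t$ by a constant depending on $n$ and $C_{g_0}$. Any place in \cite{YLi} where nonnegativity of $R$ is used to drop a scalar curvature term can instead be handled by Hölder's inequality and the uniform Sobolev inequality: for any $u\in W^{1,2}$,
\begin{align}
\left|\int R_t u^2\ dV_t\right|\leq \|R_t\|_{L^{\frac{n}{2}}} \|u\|_{L^{\frac{2n}{n-2}}}^2\leq A_0 \|R_t\|_{L^{\frac{n}{2}}}\|\nabla u\|_{L^2}^2,\notag
\end{align}
where $A_0\|R_t\|_{L^{n/2}}$ is small by taking $\delta_1(n)$ sufficiently small.

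The remaining structure of Li's argument --- using the curvature decay $t\sup|\Rm|\to 0$ together with Shi-type higher derivative estimates adapted to the asymptotically flat setting (Proposition \ref{shortbound}, extended to arbitrary closed intervals $[0,T]$ by the uniform curvature bound from Corollary \ref{longtimebound}) --- carries over essentially unchanged, producing $C^\infty_{-\tau'}$ convergence of $g(t)$ to some asymptotically flat limit $g(\infty)$ with the same asymptotically flat coordinate system. The decay $t|\Rm(t)|\to 0$ passes to $\Rm_{g(\infty)}\equiv 0$ in the limit, so $g(\infty)$ is flat and asymptotically flat, which forces $(M^n,g(\infty))$ to be isometric to flat $\mathbb{R}^n$ and in particular $M^n$ to be diffeomorphic to $\mathbb{R}^n$.

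The main obstacle will be auditing Li's paper line by line to locate every use of $R_{g_0}\geq 0$ and verifying that each one admits a replacement of the kind sketched above; in particular, one needs to check that the weighted time-integral estimates giving $\int_1^\infty \|\Ric(t)\|\,dt<\infty$ in the appropriate weighted norm still close when the volume monotonicity is traded for the uniform Sobolev constant and $L^{n/2}$ curvature control. Once this bookkeeping is complete, the argument for weighted convergence to a flat AF limit is essentially the same as in \cite{YLi}, and the conclusion of the proposition follows.
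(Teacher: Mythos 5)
Your central mechanism is the same as the paper's: trade the sign condition $R\geq 0$ for smallness of $\|R_t\|_{L^{n/2}}$ via H\"older plus the uniform Sobolev inequality of Corollary \ref{USobolev}. The paper confirms that $R\geq 0$ enters Li's argument in exactly one place, namely in showing that $\int u^p\,dV_t$ is nonincreasing for the heat-equation barrier $u$ of \cite[Theorem 4.4]{YLi} (the scalar curvature appears there through $\partial_t\,dV_t=-R\,dV_t$, so your item (ii) is the relevant one), and the substitute is precisely your H\"older/Sobolev absorption applied to $-\int R u^p\,dV_t$.

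However, there is a concrete gap your sketch does not close: the absorption degenerates as $p\to 1$. The required exponents are $p\in\bigl(\tfrac{n}{2+\min(\tau,n-2)},\tfrac{n}{2}\bigr)$, and when $\tau\geq n-2$ the lower endpoint is $1$, where the gradient coefficient $\tfrac{4(p-1)}{p}$ in
\begin{align}
\frac{d}{dt}\int u^p\ dV_t\leq -\frac{4(p-1)}{p}\int|\nabla u^{\frac{p}{2}}|^2\ dV_t-\int R u^p\ dV_t\notag
\end{align}
vanishes. Since $\delta_1(n)$ must be fixed in advance depending only on $n$, the inequality $\bigl(\int|\Rm|^{\frac{n}{2}}\,dV_t\bigr)^{2/n}<\tfrac{4(p-1)}{p}\tfrac{1}{A}$ cannot hold uniformly for all such $p$, so your argument as written only yields convergence in $C^\infty_{-\tau'}$ for $\tau'<\min(\tau,n-3)$. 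The paper repairs this with a bootstrap: first run the argument for $p$ bounded away from $1$ (i.e.\ $p>\tfrac{n}{2+\min(\tau,n-3)}$) to get convergence at some positive weight; this convergence forces the nonincreasing quantity $\int|\Rm|^{\frac{n}{2}}\,dV_t$ to tend to $0$, and then for any admissible $p$ the absorption works for all $t\geq T_p$ sufficiently large, so one reruns Li's argument starting from $g(T_p)$ to recover the full range $\tau'\in(0,\min(\tau,n-2))$. You would need to add this two-stage step to obtain the proposition as stated. The remainder of your outline (Shi-type estimates, passage to a flat AF limit) matches the paper's deferral to \cite{YLi}.
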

\begin{proof}
We refer to \cite[Sections 4-5]{YLi} for most parts of the proof. There it is assumed $R\geq 0$ along the flow; however, the first part of the proof of \cite[Theorem 4.4]{YLi} is the only point where $R\geq 0$ is used. At that point, we have a solution of the heat equation $\partial_t u=\Delta u$ satisfying on any $[0,T]$ the bounds $c_1(T) r^{-2-\tau}\leq u(t,x)\leq c_2(T) r^{-2-\tau}$. Here $r$ is a positive function on $M$, equal to $|x|$ on the image of the asymptotically flat coordinate system in $\mb{R}^n$. We just need to show that $\frac{d}{dt}\int u^p dV_t\leq 0$ for any fixed $p\in(\frac{n}{2+\min(\tau,n-2)},\frac{n}{2})$ if we assume that $\int|Rm|^{\frac{n}{2}}\ dV_0$ is small, instead of that $R\geq 0$ along the flow.

To see this, by Lemma \ref{L2}, Corollary \ref{USobolev}, and Lemma \ref{L3} we have that
\begin{align}
    \int|\Rm|^{\frac{n}{2}}\ dV_t\leq \int|\Rm|^{\frac{n}{2}}\ dV_0<\left(\delta(n)\frac{1}{C_{g_0}}\right)^{\frac{n}{2}},
\end{align}
if $\left(\int|\Rm|^{\frac{n}{2}}\ dV_0\right)^{\frac{2}{n}}<\delta(n)\frac{1}{C_{g_0}}$. We then observe that
\begin{align}
    \frac{d}{dt}\int u^p\ dV_t&\leq -\frac{4(p-1)}{p}\int|\nabla u^{\frac{p}{2}}|^2\ dV_t-\int R u^p\ dV_t\label{zeron2}
    \\
    &\leq -\frac{4(p-1)}{p}\frac{1}{A}\left(\int u^{\frac{p}{2}\frac{2n}{n-2}}\ dV_t\right)^{\frac{n-2}{n}}\notag
    \\
    &\quad+\left(\int|Rm|^{\frac{n}{2}}\right)^{\frac{2}{n}}\left(\int u^{\frac{p}{2}\frac{2n}{n-2}}\ dV_t\right)^{\frac{n-2}{n}}\notag
\end{align}
which is indeed nonpositive if $\left(\int|\Rm|^{\frac{n}{2}}\ dV_0\right)^{\frac{2}{n}}<\delta_1(n)\frac{1}{C_{g_0}}$ with a possibly smaller value of $\delta_1(n)$ than appeared in earlier lines, if we restrict to $p\in(\frac{n}{2+\min(\tau,n-3)},\frac{n}{2})$, since this bounds the coefficient $-\frac{4(p-1)}{p}$ away from zero. As remarked earlier, after this step the rest of the proof in \cite{YLi} does not use that $R\geq 0$ along the flow and so by the exact same arguments we obtain the $C^\infty_{-\tau'}(M)$ convergence of $g(t)$ for $\tau'\in(0,\min(\tau,n-3))$.

However, if $\tau> n-3$ then we have not yet realized the $C^\infty_{-\tau'}(M)$ convergence of $g(t)$ for the full claimed range of $\tau'\in(0,\min(\tau,n-2))$. To achieve this, we need to see that $\frac{d}{dt}\int u^p dV_t\leq 0$ also for $p\in(\frac{n}{2+\min(\tau,n-2)},\frac{n}{2+\min(\tau,n-3)}]$. For this, since now $g(t)\xrightarrow{t\rightarrow\infty} g(\infty)$ in $C^\infty_{-\tau'}(M)$ for some $\tau'>0$, the monotonic nonincreasing quantity $\int|\Rm|^{\frac{n}{2}}\ dV_t$ goes to $0$ as $t\rightarrow\infty$. Returning to \eqref{zeron2} we therefore see that for any $p>\frac{n}{2+\min(\tau,n-2)}$ we have $\frac{d}{dt}\int u^p dV_t\leq 0$ when $t\geq T_p$ is sufficiently large. Then we can apply again the remaining arguments from \cite{YLi}, starting at $g(T_p)$ instead of $g(0)$, to conclude the $C^\infty_{-\tau'}(M)$ convergence of $g(t)$ for the full range $\tau'\in(0,\min(\tau',n-2))$.
\end{proof}

\section{The case \texorpdfstring{$n=3$}{n=3}}\label{n3}

In this section we will complete the proof of Theorem \ref{A} by addressing the case of dimension $n=3$. Here, a technical issue arises in estimating $\partial_t\int|Rm|^{\frac{n}{2}}\ dV_t$ as we did in Section \ref{Asec}, since up to now we have computed everything using the differential inequality satisfied by the smooth function $|Rm|^2$. When $n=3$ however, we would have
\begin{align}
    \partial_t|Rm|^{\frac{3}{2}}=\frac{3}{4}|Rm|^{-\frac{1}{2}}\partial_t|Rm|^2.
\end{align}
But $|Rm|$ may be vanishing at some points along the flow, so this identity creates difficulties if we try to establish an analogue of Lemma \ref{formula} in dimension $n=3$ via integration by parts.

To avoid this issue we instead proceed in the following way. Given an AF manifold satisfying the hypotheses of Theorem \ref{A}, for $\epsilon>0$ let $u_\epsilon(t,x)$ be the solution of the heat equation,
\begin{align}
    \begin{cases}
    \partial_t u_\epsilon=\Delta u_\epsilon,
    \\
    u_\epsilon(0,x)
    =\epsilon r^{-2-\tau}.
    \end{cases}
\end{align}
Recall that $r$ is taken to be a positive function on $M$, equal to $|x|$ on the image of the asymptotically flat coordinate system in $\mb{R}^n$. In particular, $u_\epsilon(0,x)$ satisfies the same spatial decay rate as $|Rm_{g_0}|$. Moreover, by standard maximum principle arguments as in \cite{DM,YLi}, we have on any interval $[0,T]$ on which the Ricci flow of $(M^n,g_0)$ exists that
\begin{align}
    &\epsilon C_1(T) r^{-2-\tau}\leq u_\epsilon(t,x)\leq \epsilon C_2(T) r^{-2-\tau},\notag
    \\
    &|\nabla u_\epsilon(t,x)|\leq \epsilon C_2(T) r^{-3-\tau},\quad 
    |\nabla^2 u_\epsilon(t,x)|\leq \epsilon C_2(T) r^{-4-\tau}.\notag
\end{align}
Furthermore, we note that $\partial_t u_\epsilon^2=\Delta u_\epsilon^2-2|\nabla u_\epsilon|^2$. Combining this with \eqref{standevo} we obtain
\begin{align}
    \partial_t (|Rm|^2+u_\epsilon^2)&\leq\Delta(|Rm|^2+u_\epsilon^2)-2|\nabla Rm|^2-2|\nabla u_\epsilon|^2+16|Rm|^3.\label{n3ineq}
\end{align}
Since $|\nabla(|Rm|^2+u_\epsilon^2)|^2\leq 4(|Rm|^2+u_\epsilon^2)\left(|\nabla|Rm||^2+|\nabla u_\epsilon|^2\right)$, then setting $|Rm|^2+u_\epsilon^2=K_\epsilon^2$ we find that
\begin{align}
    \partial_t K_\epsilon^2\leq\Delta K_\epsilon^2-2|\nabla K_\epsilon|^2+16 K_\epsilon^3.
\end{align}
Note also that this differential inequality satisfied by $K_\epsilon$ takes the same form as the differential inequality satisfied by \eqref{standevo}, and moreover $K_\epsilon\geq \epsilon C_1(T) r^{-2-\tau}$ is bounded away from $0$ by a function with a controlled rate of spatial decay on any time interval $[0,T]$ on which the Ricci flow exists.  Therefore we have an analogue of Lemma \ref{formula} for $K_\epsilon$.

\begin{lemma}\label{n3formula}
Let $(M^n,g_0)$, $n=3$, be an asymptotically flat manifold. Then for $\alpha\geq \frac{3}{4}$,
\begin{align}
\frac{d}{dt}\int K_\epsilon^{2\alpha}\ dV_t\leq -C_1(\alpha)\int|\nabla K_\epsilon^\alpha|^2\ dV_t+C_2(\alpha)\int K_\epsilon^{2\alpha+1}\ dV_t,
\end{align}
for some constants $C_1(\alpha),C_2(\alpha)>0$.
\end{lemma}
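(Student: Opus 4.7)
The plan is to imitate the proof of Lemma \ref{formula} line by line, replacing $|\Rm|$ throughout by $K_\epsilon$ and the pointwise evolution inequality \eqref{standevo} by the one derived above, namely $\partial_t K_\epsilon^2 \leq \Delta K_\epsilon^2 - 2|\nabla K_\epsilon|^2 + 16K_\epsilon^3$. The entire purpose of introducing $K_\epsilon$ was that it is \emph{strictly positive}, bounded below by $\epsilon C_1(T) r^{-2-\tau}$ on any time interval $[0,T]$ on which the flow exists, so the function $K_\epsilon^{2\alpha}$ is smooth and the pointwise chain rule $\partial_t K_\epsilon^{2\alpha} = \alpha K_\epsilon^{2\alpha-2}\partial_t K_\epsilon^2$ is unambiguous even when $\alpha < 1$ (in particular at the critical value $\alpha = n/4 = 3/4$). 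This is precisely what was lacking for $|\Rm|$ directly.

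After combining the chain rule with the evolution inequality and $\partial_t dV_t = -R\, dV_t$, the key step is to integrate the term $\alpha K_\epsilon^{2\alpha-2}\Delta K_\epsilon^2$ by parts over balls $B_R(x_0)$ and let $R \to \infty$. Using $\nabla K_\epsilon^{2\alpha-2} = (2\alpha-2)K_\epsilon^{2\alpha-3}\nabla K_\epsilon$ and $\nabla K_\epsilon^2 = 2K_\epsilon\nabla K_\epsilon$, the interior piece contributes $-4\alpha(\alpha-1)\int K_\epsilon^{2\alpha-2}|\nabla K_\epsilon|^2\, dV_t$, which combines with the $-2\alpha K_\epsilon^{2\alpha-2}|\nabla K_\epsilon|^2$ term already present to give total coefficient $-2\alpha(2\alpha-1)$. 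The identity $|\nabla K_\epsilon^\alpha|^2 = \alpha^2 K_\epsilon^{2\alpha-2}|\nabla K_\epsilon|^2$ converts this into $-\frac{2(2\alpha-1)}{\alpha}\int|\nabla K_\epsilon^\alpha|^2\, dV_t$, with strictly positive coefficient for $\alpha \geq 3/4$; this furnishes $-C_1(\alpha)$. Finally, the scalar curvature term $-\int R K_\epsilon^{2\alpha}\, dV_t$ is absorbed using the pointwise bound $|R| \leq c_n|\Rm| \leq c_n K_\epsilon$, producing $C_2(\alpha)\int K_\epsilon^{2\alpha+1}\, dV_t$.

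The main technical obstacle, exactly as in the proof of Lemma \ref{formula}, is justifying differentiation under the integral and the vanishing of boundary terms as $R \to \infty$. Here it is crucial to use both the upper bound $K_\epsilon \leq |\Rm| + u_\epsilon \leq C r^{-2-\tau}$ and the lower bound $K_\epsilon \geq \epsilon C_1(T) r^{-2-\tau}$, together with the decay of $|\nabla \Rm|$ from Proposition \ref{shortbound} and of $|\nabla u_\epsilon|$ from the estimates quoted just before the statement. These give $|\partial_\nu K_\epsilon^2| \leq C r^{-5-2\tau}$, while $K_\epsilon^{2\alpha-2}$, even in the worst case $\alpha < 1$, is bounded above by $C r^{(2+\tau)(2-2\alpha)}$. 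The product with the surface area $R^{n-1} = R^2$ decays at infinity for every $\alpha \geq 3/4$, so the boundary integral vanishes in the limit, and the same two-sided bounds provide a dominating function on $[0,T]$ that validates the exchange of $\partial_t$ with the integral.
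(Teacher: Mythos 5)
Your proposal is correct and follows essentially the same route as the paper: multiply the evolution inequality for $K_\epsilon^2$ by $\alpha K_\epsilon^{2\alpha-2}$ (legitimate because $K_\epsilon$ is strictly positive), integrate by parts on large balls, combine the resulting gradient terms into the coefficient $-2\alpha(2\alpha-1)=\alpha(2-4\alpha)$, rewrite via $|\nabla K_\epsilon^\alpha|^2=\alpha^2K_\epsilon^{2\alpha-2}|\nabla K_\epsilon|^2$, and absorb the $-\int RK_\epsilon^{2\alpha}$ and $16K_\epsilon^3$ terms into $C_2(\alpha)\int K_\epsilon^{2\alpha+1}$. Your verification that the boundary terms vanish and that differentiation under the integral is justified is in fact more explicit than the paper's, which simply invokes the two-sided asymptotic bounds on $u_\epsilon$.
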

\begin{proof}
Since $K_\epsilon$ is strictly positive (unlike $|Rm|$) we can compute using the spatial asymptotic upper and lower bounds of $u_\epsilon$ that
\begin{align}
    \frac{d}{dt}\int K_\epsilon^{2\alpha}\ dV_t&\leq\alpha\int K_\epsilon^{2\alpha-2}\left(\Delta K_\epsilon^2-2|\nabla K_\epsilon|^2+16 K_\epsilon^3\right)\ dV_t-\int R K_\epsilon^{2\alpha}\ dV_t\notag
    \\
    &\leq \alpha\int \left(2-4\alpha\right)K_\epsilon^{2\alpha-2}|\nabla K_\epsilon|^2\ dV_t+C_2(\alpha)\int K_\epsilon^{2\alpha+1}\ dV_t\notag
    \\
    &\leq -C_1(\alpha)\int|\nabla K_\epsilon^{\alpha}|^2\ dV_t+C_2(\alpha)\int K_\epsilon^{2\alpha+1}\ dV_t.\notag
\end{align}
\end{proof}
Having established Lemma \ref{n3formula}, we may now take $\epsilon>0$ sufficiently small so that under the hypotheses of Theorem \ref{A} we also have $\left(\int K_\epsilon^{\frac{n}{2}}\ dV_0\right)^{\frac{n}{2}}<\delta(n)C_{g_0}$, and then we can follow the idea of the proof in Section \ref{Asec} applied to the function $K_\epsilon$ to complete the proof of Theorem \ref{A} by proving it in the case $n=3$.

We will now briefly describe how this proceeds. At this point in Lemma \ref{n3formula} we have found an analogue for Lemma \ref{formula} for dimension $n=3$, with $K_\epsilon$ replacing the role of $|Rm|$. From this and the fact that $K_\epsilon>|Rm|$ we straightforwardly obtain the analogue of Lemma \ref{L2} for dimension $n=3$, again with $|Rm|$ replaced by $K_\epsilon$. Thus Corollary \ref{USobolev} follows as before, and we may continue to obtain analogues of Lemma \ref{L3} and Proposition \ref{Linfty}, still for dimension $n=3$ and with $|Rm|$ replaced by $K_\epsilon$. Since $K_\epsilon>|Rm|$ we therefore obtain $L^\infty$ bounds for $|Rm|$ as well, and also the long-time existence result in Corollary \ref{longtimebound} for the Ricci flow of $(M^3,g_0)$. To pass from long-time existence of the flow to convergence, we first obtain the analogues of Lemma \ref{L4decay} and Proposition \ref{supdec} for dimension $n=3$ by another use of Lemma \ref{n3formula}, $K_\epsilon$ replacing $|Rm|$. Thus,
\begin{align}
    \lim_{t\rightarrow\infty}t\sup_{x\in M}K_\epsilon=0,
\end{align}
which implies $\lim_{t\rightarrow\infty}t\sup_{x\in M}|\Rm(t,x)|=0$. To conclude we apply Proposition \ref{Lidapt}, which can be used in our setting of dimension $n=3$ since by now we have shown that $\left(\int K_\epsilon^{\frac{n}{2}}\ dV_t\right)^{\frac{n}{2}}<\delta(n)C_{g_0}$ remains small and bounded along the flow under the hypotheses of Theorem \ref{A}.

\begin{appendices}
\section{Conformally flat AF metrics}
We will discuss here the special case of asymptotically flat metrics $g_0=e^{2u}|dx|^2$ on $\mb{R}^n$ which are conformal deformations of the flat metric. We begin by giving the proof of Corollary \ref{cfc} as a consequence of Theorem \ref{A}.

\subsection{Proof of Corollary \ref{cfc}}

\begin{proof}[Proof of Corollary \ref{cfc}]
By the conformal invariance of the Yamabe quotient, we have that for every $u\in W^{1,2}(\mb{R}^n,g_0)$,
\begin{align}
    \left(\int|u|^{\frac{2n}{n-2}}\ dV_0\right)^{\frac{n-2}{n}}\leq C_{n,e}\left(\int|\nabla u|^2+\frac{1}{6} Ru^2\ dV_0\right),
\end{align}
where $C_{n,e}$ is the Sobolev constant for the flat metric on $\mb{R}^n$. Therefore if $\int|Rm|^{\frac{n}{2}}\ dV_0$ is sufficiently small then we find that the Sobolev constant of $g_0$ satisfies $C_{g_0}\leq 2C_{n,e}$. Moreover by Theorem \ref{A} there exists an $\epsilon=\epsilon(2 C_{n,e})>0$ such that if we also have $\int|Rm|^{\frac{n}{2}}\ dV_0<\epsilon$, then the Ricci flow starting from $g_0$ will converge to the flat metric on $\mb{R}^n$. Thus we can find $\Lambda(n)>0$ as claimed.
\end{proof}

Note that in this setting we do not require a separate assumption on the Sobolev constant because of the conformal invariance of the Yamabe constant. As mentioned earlier, Corollary \ref{cfc} applies in particular to rotationally symmetric asymptotically flat metrics on $\mb{R}^n$, and therefore gives another long-time existence and convergence statement for the Ricci flow in this setting different from that in \cite{OW}. The main result there states that the Ricci flow of a rotationally symmetric, asymptotically flat metric $g_0$ on $\mb{R}^n$ exists for all times $t\in[0,\infty)$ and converges to the flat metric on $\mb{R}^n$ in the pointed Cheeger-Gromov sense if the initial metric $(\mb{R}^n,g_0)$ contains no minimal hyperspheres. For clarity we quote the relevant parts of that result below.
\begin{theorem}[{\cite[Theorem 1.1]{OW}}]\label{OWt}
    Let $g_0$ be an asymptotically flat, rotationally symmetric metric on a fixed coordinate system on $\mb{R}^n$, with $n\geq 3$. If $(\mb{R}^n,g_0)$ does not contain any minimal hyperspheres, then there exists a solution $g(t,x)\in C^\infty((0,\infty)\times\mb{R}^n)$ of the Ricci flow with initial condition $g(0)=g_0$ which remains asymptotically flat for all times $t\in[0,\infty)$ and:
    \begin{enumerate}[(a)]
        \item We have $g(t,x)\in C^1([0,T]\times\mb{R}^n)$,
        \item For each integer $\ell\geq 0$ there exists a constant $C_\ell>0$ such that
        \begin{align}
            \sup _{x \in \mathbb{R}^{n}}\left|\nabla^{\ell} \operatorname{Rm}(t, x)\right|_{g(t, x)} \leq \frac{C_{\ell}}{(1+t) t^{\ell / 2}},\quad\text{for all }t>0,\notag
        \end{align}
        \item The flow converges to flat Euclidean space in the pointed Cheeger-Gromov sense as $t\rightarrow\infty$.
    \end{enumerate}
\end{theorem}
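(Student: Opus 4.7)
The plan is to exploit rotational symmetry to reduce the Ricci flow to a quasilinear parabolic problem in a single radial variable, and then to use maximum principle techniques and Shi-type estimates to control it globally in time. First, writing a rotationally symmetric asymptotically flat metric in doubly warped form $g(t) = \phi(x,t)^2\,dx^2 + r(x,t)^2 g_{S^{n-1}}$, the Ricci tensor components are explicit, and I would use either a De Turck-type trick or a direct parabolic gauge (for example, fixing $\phi\equiv 1$ so that $x=s$ is the arclength parameter at each time $t$) to reduce the flow to a scalar evolution of the form $\partial_t r = r_{ss} - (n-2)(1-r_s^2)/r$ on $s\in[0,\infty)$, coupled to smoothness conditions at the origin ($r(0,t)=0$, $r_s(0,t)=1$) and asymptotic flatness as $s\to\infty$.

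The crucial structural step is then to verify that the hypothesis \emph{no minimal hyperspheres}, which translates into $r_s > 0$ everywhere on the initial data, is preserved along the flow. Computing the PDE satisfied by $r_s$ and applying a maximum principle, while carefully treating the boundary behavior at the origin and the asymptotically flat behavior at infinity, one should obtain a lower barrier $r_s \ge c > 0$ together with an upper barrier $r_s \le C$; this prevents new minimal hyperspheres from forming along the flow and supplies the key a priori geometric bound.

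Long-time existence would then follow by combining these barriers with curvature estimates: the sectional curvatures in a doubly warped product of this form are explicit rational expressions in $r, r_s, r_{ss}$, so keeping $r_s$ bounded away from $0$ together with a bound on $r_{ss}$ (obtained from the parabolic equation via another maximum principle argument) gives pointwise bounds on $|\Rm|$, and then the Shi derivative estimates control all higher covariant derivatives of $\Rm$. The explicit decay rate $C_\ell/((1+t)t^{\ell/2})$ in part (b) is obtained by interpolating the Shi estimates with the Type III bound $|\Rm|\les 1/t$, which is characteristic of long-time noncollapsed Ricci flows with bounded curvature and is proved here by a further barrier argument tailored to the specific PDE for $r$. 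Finally, convergence in the pointed Cheeger-Gromov sense in part (c) follows from this $1/t$ curvature decay together with completeness and noncollapsing, both of which are automatic in the asymptotically flat, rotationally symmetric setup once $r_s\ge c>0$ is established.

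The main obstacle is the second step: preservation of $r_s > 0$. The scalar equation for $r_s$ degenerates at the origin where $r\to 0$, and the asymptotically flat tail provides no useful barrier at spatial infinity, so the maximum principle argument requires careful handling at both endpoints. Establishing the correct smoothness and barrier conditions at $s=0$ (using the required $r(0,t)=0$, $r_s(0,t)=1$, $r_{ss}(0,t)=0$), and checking that the tail behavior does not allow $\inf_s r_s$ to leak in from infinity, is where most of the technical work of \cite{OW} lies.
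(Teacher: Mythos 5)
This statement is quoted verbatim from \cite[Theorem 1.1]{OW}; the paper under review offers no proof of it, so there is no internal argument to compare yours against. That said, your outline does track the actual strategy of \cite{OW}: reduction to a scalar quasilinear parabolic equation for the areal radius in an arclength-type gauge, preservation of $r_s>0$ (equivalently, absence of minimal hyperspheres) by a maximum principle with careful treatment of the origin and the asymptotically flat end, explicit control of the two sectional curvatures $\nu_1=(1-r_s^2)/r^2$ and $\nu_2=-r_{ss}/r$ via barriers, and Shi-type estimates for the higher derivatives. Two caveats are worth recording. First, the decay $|\Rm|\lesssim (1+t)^{-1}$ is not a generic consequence of long-time existence with bounded curvature, so it genuinely must be extracted from the specific radial PDE by a further barrier argument, as you acknowledge; your appeal to it being ``characteristic'' of such flows would not stand on its own. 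Second, fixing the arclength gauge at each time makes $s$ time-dependent, so $\partial_t$ at fixed $s$ and at fixed coordinate $x$ differ by an advection term (the commutator $[\partial_t,\partial_s]$ is proportional to $\partial_s$ with a curvature coefficient); this term must be carried through every maximum-principle computation, including the delicate one for $\inf r_s$, and omitting it would invalidate the barriers.
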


By proving Theorem \ref{C1} below, we will observe the following relation between Corollary \ref{cfc} and Theorem \ref{OWt}: the pinching assumption $\int|\Rm|^{\frac{n}{2}}\ dV_g<\Lambda(n)$ which we assumed in Corollary \ref{cfc} also rules out minimal hyperspheres in the rotationally symmetric setting (in fact without requiring asymptotic flatness), if we suppose that $\Lambda(n)>0$ satisfies an additional smallness constraint that $\Lambda(n)\leq C(n)$, for some constant $C(n)>0$. As a result, under this additional supposition, the special case of Corollary \ref{cfc} in the rotationally symmetric setting would then be a consequence of Theorem \ref{OWt}. Note however that we have not estimated the size of $\Lambda(n)$ in the proof of Corollary \ref{cfc}, and also will not estimate the size of $C(n)$ in the proof of Theorem \ref{C1} below, so whether or not $\Lambda(n)\leq C(n)$ and Corollary \ref{cfc} actually follows from Theorem \ref{OWt} in the rotationally symmetric setting is unclear.

\subsection{Rotationally symmetric AF metrics --- proof of Theorem \ref{C1}}

Below we consider rotationally symmetric metrics on $\mb{R}^n$ in the standard form
\begin{align}
g=dr^2+f(r)^2\ d\sigma^2,
\end{align}
where $r$ is the metric distance to the origin and $d\sigma^2$ is the standard metric on $S^{n-1}$. We assume $f(r)$ extends to a smooth odd function of $r$ with $f'(0)=1$, so that $g_0$ is well defined at $r=0$. When $f(r)=r$ then $g_0$ is exactly the flat metric on $\mb{R}^n$. First we recall the following facts which may be found for instance in \cite{CLN,Ivey}.

\begin{lemma}\label{rotform}
The sectional curvatures of $g=dr^2+f(r)^2\ d\sigma^2$ for the planes tangent to the distance spheres, $\nu_1(r)$, and for the planes containing a radial direction, $\nu_2(r)$, are respectively given by
\begin{align}
\nu_1=\frac{1-(f')^2}{f^2},\quad\nu_2=\frac{-f''}{f},
\end{align}
and the eigenvalues of the curvature operator are $\nu_1$ and $\nu_2$ with multiplicities $\binom{n-1}{2}=\frac{(n-1)(n-2)}{2}$ and $n-1$, respectively.
\end{lemma}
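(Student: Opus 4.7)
The plan is to carry out the standard warped product curvature computation for $g = dr^2 + f(r)^2\, d\sigma^2$, viewed as a warped product of an interval with the round sphere $(S^{n-1}, d\sigma^2)$. At a point $p$ with $r(p) > 0$, I would work in an orthonormal frame $\{e_0, e_1, \ldots, e_{n-1}\}$ where $e_0 = \partial_r$ and $\{e_i\}_{i=1}^{n-1}$ is obtained by rescaling an orthonormal frame $\{\tilde e_i\}$ for the unit sphere $S^{n-1}$ by $f(r)^{-1}$, so that $e_i = f(r)^{-1}\tilde e_i$.

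Next I would apply the standard O'Neill warped product formulas for sectional curvature. For a warped product $I \times_f N$ with $N$ of constant sectional curvature $K_N$, these yield
\begin{align}
K(e_0, e_i) = -\frac{f''(r)}{f(r)}, \qquad K(e_i, e_j) = \frac{K_N - (f'(r))^2}{f(r)^2}, \notag
\end{align}
for $1 \le i < j \le n-1$. Taking $N = S^{n-1}$ with $K_N = 1$ gives the stated values $\nu_2$ and $\nu_1$. The computation reduces to evaluating the Christoffel symbols of the warped product (the only nontrivial ones involve $f'/f$) and plugging into the definition $R(X,Y)Z = \nabla_X \nabla_Y Z - \nabla_Y \nabla_X Z - \nabla_{[X,Y]}Z$; this is a direct calculation found, for example, in \cite{CLN,Ivey}, to which the statement already refers.

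For the claim about the curvature operator, I would use the induced basis $\{e_0 \wedge e_i\}_{i=1}^{n-1} \cup \{e_i \wedge e_j\}_{1 \le i < j \le n-1}$ of $\Lambda^2 T_p M$ and show that the operator $\mathcal{R}:\Lambda^2 T_p M \to \Lambda^2 T_p M$ is block-diagonal in this basis, indeed equal to $\nu_2$ on the first block and $\nu_1$ on the second. The key point is that by the $O(n-1)$ rotational symmetry of $g$, together with the vanishing of the mixed components $R(e_0, e_i, e_j, e_k)$ and $R(e_0, e_i, e_0, e_j)$ for $i \ne j$ that follows directly from the warped product structure, all off-diagonal matrix entries of $\mathcal{R}$ in this basis vanish. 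The multiplicities $n-1$ and $\binom{n-1}{2}$ are then just the dimensions of the two subspaces.

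The main obstacle is really only bookkeeping: one must verify carefully that the off-diagonal components of the Riemann tensor vanish in the chosen frame, which is where the rotational symmetry (equivalently, the fact that $S^{n-1}$ has constant curvature and the warping depends only on $r$) is essential. Since the statement is quoted from \cite{CLN,Ivey}, the proposal is essentially to reference those computations rather than redoing them, but the outline above gives the self-contained argument one would write if needed.
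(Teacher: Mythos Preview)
Your proposal is correct and is exactly the standard warped product computation one finds in the references \cite{CLN,Ivey}. The paper itself does not give a proof of this lemma at all: it is stated as a recalled fact with those citations, so your outline in fact supplies more detail than the paper does.
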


Therefore $\int|\Rm|^{\frac{n}{2}}\ dV_g$ is bounded from below by both $\int|\nu_1|^{\frac{n}{2}}\ dV_g$ and $\int|\nu_2|^{\frac{n}{2}}\ dV_g$. The space $(\mb{R}^n,g)$ possesses a minimal hypersphere exactly when there exists an $R>0$ such that $f'(R)=0$. We will show that this implies $\int|\Rm|^{\frac{n}{2}}\ dV_g$ cannot be too small.

\begin{proof}[Proof of Theorem \ref{C1}]
By our discussion above, it suffices to show that if there exists a function $f:[0,R]\rightarrow\mb{R}$ such that $f(0)=0$ and $f'(0)=1, f'(R)=0$ with $f'(r)>0$ on $[0,R)$, then there exists a $C=C(n)>0$ such that
\begin{align}
\max\left(\int_0^R\left|\frac{1-(f')^2}{f^2}\right|^{\frac{n}{2}} f^{n-1}\ dr,\int_0^R\left|\frac{-f''}{f}\right|^{\frac{n}{2}}f^{n-1}\ dr\right)\geq C(n).
\end{align}
We may rescale by defining $g(r)=\frac{1}{R}f(Rr)$ on $[0,1]$ so that $g(0)=0$ and $g'(0)=1, g'(1)=0$ with $g'(r)>0$ on $[0,1)$, and since $\int|\Rm|^{\frac{n}{2}}\ dV_g$ is scale-invariant, we find that
\begin{align}
E_1(g)&:=\int_0^1\left|\frac{1-(g')^2}{g^2}\right|^{\frac{n}{2}} g^{n-1}\ dr=\int_0^R\left|\frac{1-(f')^2}{f^2}\right|^{\frac{n}{2}} f^{n-1}\ dr,
\\
E_2(g)&:=\int_0^1\left|\frac{-g''}{g}\right|^{\frac{n}{2}}g^{n-1}\ dr=\int_0^R\left|\frac{-f''}{f}\right|^{\frac{n}{2}}f^{n-1}\ dr.\notag
\end{align}
So it suffices to show that the maximum of $E_1(g),E_2(g)$  is bounded below by some $C(n)$. To do this, first we will show that there exists a $C_1(n)>0$ such that if $g(1)=D\geq C_1(n)$, then $E_1(g)\geq 1$. We note $g(r)$ is monotonically increasing on $[0,1]$ and that $D=\int_0^1g'\ dr\leq \left(\int_0^1(g')^2\ dr\right)^{\frac{1}{2}}$, so that
\begin{align}
E_1(g)&=\int_0^1\frac{\left|1-(g')^2\right|^{\frac{n}{2}}}{g}\ dr
\\
&\geq\frac{1}{D}\int_0^1\left|1-(g')^2\right|^{\frac{n}{2}}\ dr\notag
\\
&\geq\frac{1}{D}\left|\int_0^1 (g')^2\ dr-1\right|^{\frac{n}{2}}\notag
\\
&\geq \frac{1}{D}\left(D^2-1\right)^{\frac{n}{2}},\quad\text{if }D\geq 1.\notag
\end{align}
Thus if $D$ is sufficiently large, $E_1(g)\geq 1$ and then $\max(E_1(g),E_2(g))\geq 1$.

Next we estimate $E_1(g), E_2(g)$ from below. Because of our preceding estimates we only need to consider those $g$ for which $g(1)\leq C_1(n)$. For $E_1(g)$ we have similar to before,
\begin{align}
E_1(g)&\geq\frac{1}{C_1(n)}\left|\int_0^1 1-(g')^2\ dr\right|^{\frac{n}{2}},
\intertext{while for $E_2(g)$ we have}
E_2(g)&=\int_0^1\left|\frac{-g''}{g}\right|^{\frac{n}{2}}g^{n-1}\ dr
\\
&\geq\left|\int_0^1 g'' g^{\frac{n-2}{n}}\ dr\right|^{\frac{n}{2}}\notag
\\
&=\left|\int_0^1 \frac{n-2}{n}(g')^2 g^{\frac{-2}{n}}\ dr+\left[g' g^{\frac{n-2}{n}}\right]_0^1\right|^{\frac{n}{2}}\notag
\\
&\geq \left(\frac{n-2}{n}\right)^{\frac{n}{2}}\frac{1}{C_1(n)}\left(\int_0^1(g')^2\ dr\right)^{\frac{n}{2}}.\notag
\end{align}
Hence if $g(1)\leq C_1(n)$ then $\max(E_1(g),E_2(g))$ is also bounded below by some constant depending only on $n$, completing the proof.
\end{proof}

We have seen that $\int|\Rm|^{\frac{n}{2}}\ dV_g$ small alone suffices to guarantee the long-time existence and convergence of this flow to flat space, without a bound on the Sobolev constant as required in Theorem \ref{A}. This is implied either by Corollary \ref{cfc} alone, or by a combination of Theorem \ref{C1} with Theorem \ref{OWt} from \cite{OW}. It would be interesting to see how this situation relates to the general case. Another interesting question is whether $\int|\Rm|^{\frac{n}{2}}\ dV_g$ sufficiently small has any connection to the existence of closed minimal hypersurfaces on a general asymptotically flat manifold.
\end{appendices}

\bibliographystyle{alpha}
\bibliography{references}

\begin{thebibliography}{DPW00}

\bibitem[AW43]{AW}
Carl~B. Allendoerfer and Andr\'{e} Weil.
\newblock The {G}auss-{B}onnet theorem for {R}iemannian polyhedra.
\newblock {\em Trans. Amer. Math. Soc.}, 53:101--129, 1943.

\bibitem[BS09]{BS}
Simon Brendle and Richard Schoen.
\newblock Manifolds with {$1/4$}-pinched curvature are space forms.
\newblock {\em J. Amer. Math. Soc.}, 22(1):287--307, 2009.

\bibitem[BW08]{BW}
Christoph B\"{o}hm and Burkhard Wilking.
\newblock Manifolds with positive curvature operators are space forms.
\newblock {\em Ann. of Math. (2)}, 167(3):1079--1097, 2008.

\bibitem[Car98]{Carron}
Gilles Carron.
\newblock Une suite exacte en {$L^2$}-cohomologie.
\newblock {\em Duke Math. J.}, 95(2):343--372, 1998.

\bibitem[CLN06]{CLN}
Bennett Chow, Peng Lu, and Lei Ni.
\newblock {\em Hamilton's {R}icci flow}, volume~77 of {\em Graduate Studies in
  Mathematics}.
\newblock American Mathematical Society, Providence, RI; Science Press Beijing,
  New York, 2006.

\bibitem[CZ06]{CZ}
Bing-Long Chen and Xi-Ping Zhu.
\newblock Uniqueness of the {R}icci flow on complete noncompact manifolds.
\newblock {\em J. Differential Geom.}, 74(1):119--154, 2006.

\bibitem[Dav89]{Davies}
E.~B. Davies.
\newblock {\em Heat kernels and spectral theory}, volume~92 of {\em Cambridge
  Tracts in Mathematics}.
\newblock Cambridge University Press, Cambridge, 1989.

\bibitem[DM07]{DM}
Xianzhe Dai and Li~Ma.
\newblock Mass under the {R}icci flow.
\newblock {\em Comm. Math. Phys.}, 274(1):65--80, 2007.

\bibitem[DPW00]{DPW}
Xianzhe Dai, Peter Petersen, and Guofang Wei.
\newblock Integral pinching theorems.
\newblock {\em Manuscripta Math.}, 101(2):143--152, 2000.

\bibitem[Gur94]{Gursky}
Matthew~J. Gursky.
\newblock Locally conformally flat four- and six-manifolds of positive scalar
  curvature and positive {E}uler characteristic.
\newblock {\em Indiana Univ. Math. J.}, 43(3):747--774, 1994.

\bibitem[Ham82]{Ham3}
Richard~S. Hamilton.
\newblock Three-manifolds with positive {R}icci curvature.
\newblock {\em J. Differential Geom.}, 17(2):255--306, 1982.

\bibitem[Ham95]{HamS}
Richard~S. Hamilton.
\newblock The formation of singularities in the {R}icci flow.
\newblock In {\em Surveys in differential geometry, {V}ol. {II} ({C}ambridge,
  {MA}, 1993)}, pages 7--136. Int. Press, Cambridge, MA, 1995.

\bibitem[Heb96]{Hebey}
Emmanuel Hebey.
\newblock Optimal {S}obolev inequalities on complete {R}iemannian manifolds
  with {R}icci curvature bounded below and positive injectivity radius.
\newblock {\em Amer. J. Math.}, 118(2):291--300, 1996.

\bibitem[Heb99]{Hebeyb}
Emmanuel Hebey.
\newblock {\em Nonlinear analysis on manifolds: {S}obolev spaces and
  inequalities}, volume~5 of {\em Courant Lecture Notes in Mathematics}.
\newblock New York University, Courant Institute of Mathematical Sciences, New
  York; American Mathematical Society, Providence, RI, 1999.

\bibitem[Hui85]{Huisken}
Gerhard Huisken.
\newblock Ricci deformation of the metric on a {R}iemannian manifold.
\newblock {\em J. Differential Geom.}, 21(1):47--62, 1985.

\bibitem[HV96]{HV}
Emmanuel Hebey and Michel Vaugon.
\newblock Effective {$L_p$} pinching for the concircular curvature.
\newblock {\em J. Geom. Anal.}, 6(4):531--553 (1997), 1996.

\bibitem[Ive94]{Ivey}
Thomas~A. Ivey.
\newblock The {R}icci flow on radially symmetric {${\bf R}^3$}.
\newblock {\em Comm. Partial Differential Equations}, 19(9-10):1481--1500,
  1994.

\bibitem[Li18]{YLi}
Yu~Li.
\newblock Ricci flow on asymptotically {E}uclidean manifolds.
\newblock {\em Geom. Topol.}, 22(3):1837--1891, 2018.

\bibitem[Mar86]{Margerin}
Christophe Margerin.
\newblock Pointwise pinched manifolds are space forms.
\newblock In {\em Geometric measure theory and the calculus of variations
  ({A}rcata, {C}alif., 1984)}, volume~44 of {\em Proc. Sympos. Pure Math.},
  pages 307--328. Amer. Math. Soc., Providence, RI, 1986.

\bibitem[Mos64]{Moser}
J\"{u}rgen Moser.
\newblock A {H}arnack inequality for parabolic differential equations.
\newblock {\em Comm. Pure Appl. Math.}, 17:101--134, 1964.

\bibitem[Nis86]{Nishikawa}
Seiki Nishikawa.
\newblock Deformation of {R}iemannian metrics and manifolds with bounded
  curvature ratios.
\newblock In {\em Geometric measure theory and the calculus of variations
  ({A}rcata, {C}alif., 1984)}, volume~44 of {\em Proc. Sympos. Pure Math.},
  pages 343--352. Amer. Math. Soc., Providence, RI, 1986.

\bibitem[OW07]{OW}
Todd~A. Oliynyk and Eric Woolgar.
\newblock Rotationally symmetric {R}icci flow on asymptotically flat manifolds.
\newblock {\em Comm. Anal. Geom.}, 15(3):535--568, 2007.

\bibitem[Per02]{P}
Grisha Perelman.
\newblock The entropy formula for the {R}icci flow and its geometric
  applications, 2002.

\bibitem[Shi89a]{Shi}
Wan-Xiong Shi.
\newblock Deforming the metric on complete {R}iemannian manifolds.
\newblock {\em J. Differential Geom.}, 30(1):223--301, 1989.

\bibitem[Shi89b]{Shi2}
Wan-Xiong Shi.
\newblock Ricci deformation of the metric on complete noncompact {R}iemannian
  manifolds.
\newblock {\em J. Differential Geom.}, 30(2):303--394, 1989.

\bibitem[SSS08]{SSS}
Oliver~C. Schn\"{u}rer, Felix Schulze, and Miles Simon.
\newblock Stability of {E}uclidean space under {R}icci flow.
\newblock {\em Comm. Anal. Geom.}, 16(1):127--158, 2008.

\bibitem[TV05]{TV}
Gang Tian and Jeff Viaclovsky.
\newblock Bach-flat asymptotically locally {E}uclidean metrics.
\newblock {\em Invent. Math.}, 160(2):357--415, 2005.

\bibitem[Yan92]{DY}
Deane Yang.
\newblock {$L^p$} pinching and compactness theorems for compact {R}iemannian
  manifolds.
\newblock {\em Forum Math.}, 4(3):323--333, 1992.

\bibitem[Ye93]{Ye1}
Rugang Ye.
\newblock Ricci flow, {E}instein metrics and space forms.
\newblock {\em Trans. Amer. Math. Soc.}, 338(2):871--896, 1993.

\bibitem[Ye15]{RY2}
Rugang Ye.
\newblock The logarithmic {S}obolev and {S}obolev inequalities along the
  {R}icci flow.
\newblock {\em Commun. Math. Stat.}, 3(1):1--36, 2015.

\bibitem[Zha12]{QZ}
Qi~S. Zhang.
\newblock Extremal of log {S}obolev inequality and {$W$} entropy on noncompact
  manifolds.
\newblock {\em J. Funct. Anal.}, 263(7):2051--2101, 2012.

\end{thebibliography}

\end{document}